% arxiv v1
\documentclass[12pt,letterpaper,reqno]{amsart}

\usepackage{mathrsfs,enumerate}
\usepackage{amsmath,amsfonts,amsthm,amssymb,amscd,mathtools}
\usepackage{tikz-cd}
\usetikzlibrary{decorations.pathreplacing}
\usepackage{latexsym}
%\usepackage[dvips]{graphics}
%\usepackage{epsfig, tikz}

% select font
\usepackage{libertinus}
\usepackage[T1]{fontenc}

% for link highlighting
\definecolor{cadmiumgreen}{rgb}{0.0, 0.42, 0.24}
\usepackage[
colorlinks, citecolor=cadmiumgreen,
pagebackref,
pdfauthor={Jeffrey C. Lagarias, David Harry Richman}, 
pdfstartview ={FitV},
]{hyperref}
\hypersetup{
pdftitle={The family of a-floor quotient partial orders},
}

\usepackage[
	backrefs,
	msc-links,
	nobysame,
	lite,
    initials,
]{amsrefs} 

%%%%%%%%%%%%%%%
%for correct figure numbering
\usepackage{chngcntr}
%\counterwithout{figure}{section} %continuous numbering
%\counterwithin{figure}{section} %by-section numbering
%\setcounter{figure}{0}
%\renewcommand{\thefigure}{A\arabic{section}.\arabic{figure}}
%%%%%%%%%%%%%%%%%%%%

%\input amssym.def
%\input amssym.tex
%\usepackage{showkeys}

% Theorem / Lemmas et cetera

\newtheorem{thm}{Theorem}[section]

\newtheorem{lem}[thm]{Lemma}
\newtheorem{prop}[thm]{Proposition}

\theoremstyle{definition}
\newtheorem{defi}[thm]{Definition}
\newtheorem{exa}[thm]{Example}

\theoremstyle{remark}
\newtheorem{rmk}[thm]{Remark}

% General Symbols

% \def\notdiv{\ \mathbin{\mkern-8mu|\!\!\!\smallsetminus}}

%Blackboard Letters
\newcommand{\RR}{\ensuremath{\mathbb{R}}}

\newcommand{\ZZ}{\ensuremath{\mathbb{Z}}}
\newcommand{\QQ}{\mathbb{Q}}
\newcommand{\NN}{{\mathbb{N}}}
\newcommand{\NNplus}{{\mathbb{N}^+}}

% \renewcommand{\AA}{\mathbb{A}}

%%%%%%%%%%%%%
% HELVETICA FONT-NOW ROMAN
%%%%%%%%%%%%%

%\newcommand{\R}[1]{{\sR{#1}}}    %% these two shortcuts are redundant
%\newcommand{\sRpm}[1]{\mathcal{R}({#1})^\pm}

\newcommand{\sA}{{\mathcal{A}}}

\newcommand{\sD}{{\mathcal D}}

\newcommand{\sI}{{\mathcal I}}
\newcommand{\sK}{{\mathcal K}}
\newcommand{\sM}{{\mathcal M}}

\newcommand{\sP}{{\mathcal P}}

\newcommand{\sQ}{{\mathcal Q}}

\newcommand{\sS}{{\mathcal S}}

% notation for floor divisor / floor quotient
\newcommand{\dd}{d}
\newcommand{\asafe}{\texorpdfstring{$a$}{a}}

\newcommand{\floor}[1]{\left\lfloor{#1}\right\rfloor}
 % small floor function symbol

\newcommand{\floorfrac}[2]{\floor{\frac{#1}{#2}}}
\newcommand{\floorslash}[2]{\floor{{#1} / {#2}}}

\newcommand{\angles}[1]{\left\langle{#1}\right\rangle}
\newcommand{\verts}[1]{\left\vert{#1}\right\vert}

%%%%%%%%
% Rounding function 2

%%%%%%%  Almost Divisor notation   %%%%%%%%%%%%%%%%%
\newcommand{\J}{{\Phi}} 
\newcommand{\AD}{{\,\preccurlyeq_{1}\,}}
\newcommand{\ADa}{{\,\preccurlyeq_{a}\,}}
\newcommand{\ADof}[1]{{\,\preccurlyeq_{#1}\,}}

\newcommand{\notAD}{{\,\not\preccurlyeq_{1}\,}}
\newcommand{\notADa}{{\,\not\preccurlyeq_{a}\,}}
% Mobius function of the floor quotient order
\newcommand{\muAD}{{\mu_{1}}}
\newcommand{\muADa}{{\mu_{a}}}
% Mobius function of the a-floor quotient order
\newcommand{\muADof}[1]{{\mu_{#1}}}
\newcommand{\mufquo}[1]{{\mu_{#1}}}

\newcommand{\ADset}{\sQ}
\newcommand{\ADaset}{\sQ_a}

\newcommand{\ADsetsmall}{\sQ_1^-}
\newcommand{\ADsetlarge}{\sQ_1^+}

\newcommand{\ADasetsmall}{\sQ_a^-}
\newcommand{\ADasetlarge}{\sQ_a^+}

\newcommand{\Dset}{\sD}
\newcommand{\ADmult}{\sM}

% count a-floor multiples
\newcommand{\multcount}{\sigma}

%%%%%%%%%%%%%%%%%%%%%%%%%%%%%%%%%%%%%%%%%%%%
% JULY 2021
% covering multiple

\newcommand{\divides}{\,\mid\,}
%%%floor quotint
%\newcommand{\fq1}{{\prec}_{1}}
%%%floor quotint
 
\newcommand{\fquo}[1]{\preccurlyeq_{#1}}
%%%%%%%%%%

%%%%%%%%%%%%%%%%%%%%%%%%%%%%%%%%%%%%%%%%%%%%%%%%
% number of almost-divisors
%\newcommand{\ndivAD}{\sigma_0^{AD}}
%\newcommand{\ndiv}{\sigma_0}

%%%%%%%%%%%%%%%%%%%%%%%%%%%%%%%%%%%%%%%%%%%%%%%%%
% Dual beatty sequence
%\newcommand{\dualB}{\widehat{\sB}} %%% use ''\sBdual'' defined above
%%%%%%%%%%%%%%%%%%%
% Abbreviation for S_0
%%%%%%%%%%%%%%%%%%%
%\newcommand{\Sig}{{S}}
%%%%%%%%%%%%%

%\DeclareMathOperator{\int}{int}

\newcommand{\Mobius}{M\"{o}bius}

\paperheight=11in
\paperwidth=8.5in

\setlength{\marginparwidth}{30pt}
\setlength{\oddsidemargin}{0pt}
\setlength{\evensidemargin}{0pt}
\setlength{\textwidth}{450pt}
\setlength{\voffset}{0pt}
\setlength{\topmargin}{0pt}
\setlength{\textheight}{650pt}

%%%%%make-equation-numbers-by-section%%%%%%%

%%%%%%%%%%%%%%%%%%%%%%%%%%%

%%%%%%%%%%%%%%%%%%%%%%%%
% TITLE
%%%%%%%%%%%%%%%%%%%%%%%%
\begin{document}
\title{The family of $a$-floor quotient partial orders} 
%\title{Extended floor quotient partial orders: $a$-floor quotients}
%{Approximate divisor partial  orders}
% on the integers

\author{Jeffrey C. Lagarias}
%\thanks{The research of the first author was supported by NSF grant
%DMS-1701576}
\address{Dept.\ of Mathematics, University of Michigan, Ann Arbor MI 48109-1043} 
\email{lagarias@umich.edu}

\author{David Harry Richman}
\address{Dept.\ of Mathematics, University of Washington,
Seattle, WA  98195}
\email{hrichman@uw.edu}

\subjclass[2020]{06A06, 11A05
%26D07, 65G30, 97N20 
(primary), 
05A16, 
15B36, 26D07
%11N80, 
%39B72
(secondary)}
\date{March 7, 2024}

\begin{abstract}
An approximate divisor order is a partial order on the positive integers $\NNplus$ 
that refines the divisor order and is refined by the additive total order. 
A previous paper studied such a partial order on $\NNplus$, produced using the floor function. 
A positive integer $d$ is a floor quotient of $n$, denoted $d \AD n$, if there is a positive integer $k$ such that $d = \floorslash{n}{k}$.
The floor quotient relation defines a partial order on the positive integers.
This paper studies a  family of partial orders, the $a$-floor quotient relations $\ADa$, for $a \in \NNplus$, 
which interpolate between the floor quotient order and the divisor order on $\NNplus$.  
The paper studies the internal structure of these orders.
\end{abstract} 
\maketitle

\tableofcontents

% \bibliographystyle{amsplain}

%%%%%%%%%%%%%%%%%%%%%%%%%%%%%%%%%%%%%%%%%%%%%%%%%%%%%%%%%%%%%%%%%%%%%%%%%%%%%%%%%%%%%%%%
%
% section 1: INTRODUCTION
%
%%%%%%%%%%%%%%%%%%%%%%%%%%%%%%%%%%%%%%%%%%%%%%%%%%%%%%%%%%%%%%%%%%%%%%%%%%%%%%%%%%%%%
\section{Introduction}\label{sec:1}

Denote the positive integers $\NNplus = \{ 1, 2, 3, \ldots\}$.
We call a partial order 
$\sP = (\NNplus, \preccurlyeq)$
on the positive integers  an
{\em approximate divisor order}  if:
\begin{enumerate}
\item
It refines the (multiplicative) divisor partial order $\sD =(\NNplus, \divides)$, i.e.
\[
\dd \divides n \qquad\Rightarrow\qquad \dd \preccurlyeq n,
\]
in which $\dd \divides n$  denotes $\dd$ divides $n$.
\item
It is refined by the additive total  order $\sA = ( \NNplus, \le)$, i.e.
\[
\dd \preccurlyeq n \qquad\Rightarrow\qquad \dd \leq n .
\]
\end{enumerate}
%%%%%%%%%%%%%%%%%%%%%%%
%The interaction of addition and multiplication on $\NNplus$ is at the heart of many deep mathematical theorems and open problems.
%%%%%%%%%%%%%%%%%%%%%%%
In a  previous paper \cite{LagR:23a} we studied a partial order on 
the positive integers $\NNplus$ called the floor quotient partial order, defined using
the floor function $\floor{x}$, which sends $x$ to the greatest integer no larger than $x$.

%%%%%%%%%%%
% Defn. 1.1
%%%%%%%%%%%%
\begin{defi}
A positive integer $\dd$ is called a {\em floor quotient} of $n$, 
written $\dd \AD n$,  
if $\dd = \floor{ \frac{n}{k} }$ for some positive  integer $k$.
\end{defi}

The floor quotient order is an approximate divisor order. It has many more approximate divisors
of a given integer $n$ than the usual divisor order, about $2 \sqrt{n}$, 
as noted by Cardinal \cite[Prop. 4]{Cardinal:10} and Heyman \cite{Heyman:19}.

%%%%%%%%%%%%%%%%%%%%%%%%%%%%%%%%%%%%%%%%%%%%%%%%%%%%%%%%%%%%%%
%
% section: 1..1 a-floor  floor quotient order 
%
%%%%%%%%%%%%%%%%%%%%%%%%%%%%%%%%%%%%%%%%%%%%%%%%%%%%%%%%%%%%%
\subsection{The a-floor quotient order}\label{subsec:10} 

This paper studies an infinite family of approximate divisor orders, constructed from the floor quotient relation, 
called {\em $a$-floor quotients}, where $a\geq 1$ is an integer parameter.

%%%%%%%%%%%
% Defn. 1.2
%%%%%%%%%%%%
\begin{defi}\label{def:a-floor} 
A positive integer $\dd$ is called an {\em $a$-floor quotient} of $n$, 
written $\dd \ADa n$,  if and only if $ad \AD an$ holds. 
\end{defi} 

The floor quotient relation is the case $a=1$ of this relation. 
The $1$-floor quotient relation is fundamental in that all the $a$-floor quotient relations 
are directly constructible from it; they are embedded inside the  $1$-floor quotient relation, via 
the dilation  map $d \to ad$ used in its definition.
%%%%%%%%%%%%%%%%%%%%%%%%%%%%%%%%%%%%%%%%%%%%%%%%%%
%The full internal structure of the floor quotient order is described by the set of triples $(d, k, n)$.
%%%%%%%%%%%%%%%%%%%%%%%%%%%%%%%%%%%%%%%%%%%%%%%%%%%

%%%%%%%%%%%%%%%%%%%%%
% Theorem 1.3
%%%%%%%%%%%%%%%%%%%%%

\begin{thm}\label{thm:a-approx-order} 
The $a$-floor quotient relation $\ADa$
defines a partial order  $\ADaset := ( \NNplus, \ADa)$.
For each $a \ge 1$, the  partial order $\ADaset$ is an approximate divisor order.
\end{thm}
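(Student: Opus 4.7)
The plan is to reduce every property of $\ADa$ directly to the corresponding property of the floor quotient order $\AD$, which was already established in \cite{LagR:23a}. The whole argument is a transfer via the dilation bijection $d \mapsto ad$ from $\NNplus$ into $a\NNplus \subset \NNplus$, since by Definition~\ref{def:a-floor} the relation $d \ADa n$ is \emph{defined} to mean $ad \AD an$.

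First I would verify that $\ADa$ is a partial order. For reflexivity, $d \ADa d$ follows because $ad \AD ad$ (taking $k=1$ gives $ad = \floor{ad/1}$). For antisymmetry, suppose $d \ADa n$ and $n \ADa d$; then $ad \AD an$ and $an \AD ad$, so antisymmetry of $\AD$ yields $ad = an$, and cancelling $a$ gives $d = n$. For transitivity, if $d \ADa e$ and $e \ADa n$, then $ad \AD ae$ and $ae \AD an$, whence $ad \AD an$ by transitivity of $\AD$, i.e.\ $d \ADa n$.

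Next I would verify the two approximate divisor order properties. For refinement of the divisor order: if $d \mid n$, then $ad \mid an$, and because $\AD$ is itself an approximate divisor order this implies $ad \AD an$, i.e.\ $d \ADa n$. For being refined by the additive order: if $d \ADa n$, then $ad \AD an$, and since $\AD$ is refined by $\le$ we get $ad \le an$, hence $d \le n$.

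There is no real obstacle here; the only point worth flagging is that the argument uses three properties of $\AD$ as black boxes, namely that $\AD$ is itself a partial order and that it is an approximate divisor order, all of which were established in \cite{LagR:23a}. The cancellation step $ad = an \Rightarrow d = n$ in antisymmetry and the implication $ad \le an \Rightarrow d \le n$ both rely on $a \ge 1$, which is part of the standing hypothesis.
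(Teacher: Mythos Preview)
Your proof is correct and is essentially the same as the paper's: both reduce the partial order axioms to the $a=1$ case via the dilation $d\mapsto ad$, and both verify refinement of the divisor order by $d\mid n\Rightarrow ad\mid an\Rightarrow ad\AD an$. The only cosmetic difference is that the paper, for the ``refined by $\le$'' step, cites the cutting property $ad=\floor{an/k}$ from Theorem~\ref{thm:a-equiv-properties}(1) directly rather than passing through $ad\le an$, and mentions Theorem~\ref{thm:a-equiv-properties} as an alternative source for the partial order axioms; your uniform use of the transfer principle is if anything tidier.
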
 

Theorem \ref{thm:a-approx-order} follows as a consequence of  a more detailed result (Theorem~\ref{thm:a-equiv-properties})   
showing the equivalence of six  properties characterizing  $a$-floor quotients.  These six properties parallel
the six equivalence for the $1$-floor quotient proved in \cite[Theorem 3.1]{LagR:23a}.
The first two of these are: 
\begin{enumerate}
\item (Cutting property)  
 $a\dd = \floor{ \frac{an}{k}}$ for some integer $k$. 

\item (Covering property)  
For the half-open unit interval $I_\dd= [\dd, \dd + 1/a)$, there is some integer
$k$ such that the interval
$k I_\dd$ covers $I_n$,
i.e.
$ k  [\dd, \dd + 1/a) \supseteq [n, n + 1/a).$
\end{enumerate}

The partial order axioms for $\ADset_a = (\NNplus, \ADa)$ follow 
 easily from the covering property (2).
The cutting property (1) easily shows that the {floor quotient} order refines the
divisor partial order, and  is refined by the additive total order.

If $d$ is an $a$-floor quotient of $n$, then we will call $n$ an {\em $a$-floor  multiple} of $d$; 
we  will also call any associated value $k$ such that $an= \floorfrac{ad}{k}$ an {\em $a$-cutting length} of $(\dd,n)$.
There may be more than one $a$-cutting length  $k$ yielding a given $a$-floor quotient pair  $(\dd, n)$.
The  $a$-floor quotient relation drops the extra information contained in the value of $k$.

%%%%%%%%%%%%%%%%%%%%%%%%%%%%%%%%%%%%%%%%%%%%%%%%%%%%%%%%
%(Comment) Some floor quotients $(d,n)$ have several  quotient certificates $k$, which comprise the set $\sK(d, n)$.
% of cutting lengths. 
%and the set of allowed $k$ we term  {\em quotient certificates} of $(d,n)$.
%The value $k$ need not be unique, for example for $d=2$ and $n=11$ the values $k=4$ and $k=5$ both work. 
%We also say that $d$ is a {\em $1$-floor divisor} of $n$. 
%%%%%%%%%%%%%%%%%%%%%%%%%%%%%%%%%%%%%%%%%%%%%%%%%%%%%%%%

The  study of the $a$-floor quotients is motivated by the fact that 
the divisor order has a dilation symmetry preserving the order structure:  
$d \mid n$ implies $ad \mid an$ for all $a \ge 1$.
The floor quotient order $\AD$ does not have this dilation symmetry,
and applying the $a$-dilation results in a new partial order, the $a$-floor quotient order.
The next two subsections state some of the main results concerning this family of partial orders.

%%%%%%%%%%%%%%%%%%%%%%%%%%%%%%%%%%%%%%%%%%%%%%%%%%%%%%%%%%%%%%
%
% section 1.2 Hierarchy of  $a$-floor quotients partial orders
%
%%%%%%%%%%%%%%%%%%%%%%%%%%%%%%%%%%%%%%%%%%%%%%%%%%%%%%%%%%%%%
\subsection{Hierarchy of \asafe-floor quotient orders}\label{subsec:11}

A main result states that the family of $a$-floor quotient orders for $a\in \NNplus$ interpolates monotonically between the floor quotient order 
$\ADset_1 = (\NNplus, \AD)$ 
and the divisor partial order $\sD = (\NNplus, \divides)$.

%%%%%%%%%%%%%%%
% Theorem 1.4 
%%%%%%%%%%%%%%%
\begin{thm}[$a$-floor quotient order hierarchy]
\label{thm:hierarchy-00}
\hfill
\begin{enumerate}[(1)]
\item If $a < b$ (in the additive order) then the $a$-floor quotient partial order $\ADset_a$ strictly refines the
$b$-floor quotient partial order  $\ADset_b$. 
That is, the  $a$-floor quotient orders form a nested decreasing sequence of order relations on $\NNplus$ as the parameter $a$ increases.  

\item The intersection of all the $a$-floor quotient partial order relations, 
$\bigcap_{a = 1}^{\infty} \ADset_a$,
is  the divisor relation  $\sD = (\NNplus, \divides)$.
\end{enumerate}
\end{thm}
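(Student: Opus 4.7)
Both parts reduce to a convenient inequality form of the covering (equivalently cutting) property: $d \ADa n$ if and only if there is a positive integer $k$ with
\[
  kd \le n \qquad\text{and}\qquad a(n-kd) \le k-1,
\]
which one obtains by unwinding either $ad = \floor{an/k}$ or $k[d, d+1/a) \supseteq [n, n+1/a)$. I will use this criterion throughout.

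For part (1), the containment $\ADset_b \subseteq \ADset_a$ when $a < b$ is immediate from this inequality: if $k$ witnesses $d \ADof{b} n$, so $b(n-kd) \le k-1$, then $a(n-kd) \le b(n-kd) \le k-1$, and the same $k$ witnesses $d \ADa n$. For strictness, I will exhibit an explicit family. Given $a < b$, fix any $d \ge 2$ and set $n = (a+1)d + 1$. With $k = a+1$ one has $a(n-kd) = a = k-1$, so $d \ADa n$. To verify $d \not\ADof{b} n$, note that any candidate $b$-cutting length $k'$ must satisfy $k'd \le n$, and since $d \ge 2$ this forces $k' \le a+1$. The case $k' = a+1$ gives $b(n-k'd) = b > a = k'-1$, violating the inequality; the case $k' \le a$ forces $n-k'd \ge d+1 \ge 3$, so $b(n-k'd) \ge 3b > k'-1$, again violating it. Hence $d \ADa n$ but $d \not\ADof{b} n$, giving strict refinement. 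This case split is the only step in the proof requiring real attention, and it relies essentially on the hypothesis $d \ge 2$.

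For part (2), one inclusion is automatic: each $\ADset_a$ is an approximate divisor order (Theorem~\ref{thm:a-approx-order}), so contains $\sD$, whence $\sD \subseteq \bigcap_a \ADset_a$. Conversely, suppose $d \nmid n$ and write $n = qd + r$ with $q = \floor{n/d}$ and $1 \le r \le d-1$. Any cutting length $k$ for $d \ADa n$ satisfies $k \le n/d$; since $r>0$, this forces $k \le q$ and hence $n - kd \ge r$. The cutting inequality then yields $ar \le a(n-kd) \le k-1 \le q-1$, so $a \le (q-1)/r$. Choosing $a = q$ (so that $ar = qr \ge q > q-1$) violates this bound, showing $d \not\ADof{q} n$ and hence $(d,n) \notin \bigcap_a \ADset_a$. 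This completes the proof; apart from the elementary case analysis in part (1b), the argument contains no real obstacle once the covering inequality is in place.
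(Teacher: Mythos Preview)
Your proof is correct. The overall strategy matches the paper's (Theorem~\ref{thm:hierarchy-11}): use a characterization from Theorem~\ref{thm:a-equiv-properties} to get monotonicity in $a$, exhibit an explicit pair for strictness, and let $a\to\infty$ to recover divisibility. The difference is in \emph{which} characterization you use. You work with the cutting inequality $kd\le n$, $a(n-kd)\le k-1$ (essentially property (1)/(4)), keeping the existential quantifier over $k$ and tracking it through a case split. The paper instead invokes property~(6), the reciprocal-duality form
\[
d \ADa n \iff d \le \frac{n}{\floor{n/d}} < d + \frac{1}{a},
\]
which eliminates $k$ entirely: monotonicity in $a$ is then a one-line observation, and the limit $a\to\infty$ forces $n/\floor{n/d}=d$, i.e.\ $d\mid n$, with no further work. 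For strictness the paper simply takes $(d,n)=(2,2b+1)$, which is your example specialized to $d=2$ (and shifted to parameter $b$); your case analysis on $k'$ is correct but more effort than needed. One trivial edge case you glide over in part~(2): if $d>n$ then $q=0$ and ``choose $a=q$'' is not available, but of course $(d,n)\notin\ADset_1$ already in that case.
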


\noindent Theorem \ref{thm:hierarchy-00} follows directly from Theorem \ref{thm:hierarchy-11}, proved in Section \ref{subsec:33}.
%We can rephrase this result as saying that the limit as $a \to \infty$ of the $a$-floor quotient orders 
%$\bigcap_{a=1}^{\infty} \ADset_a$
 %is  the divisor partial order  $\sD = (\NNplus, \divides)$.

Section \ref{sec:4}  gives results  on the rate of convergence of an initial interval $\ADset_a[1,n]$ of the $a$-floor quotient
order to the divisor order, as $a$ increases.  (Recall that an {\em interval} $\sP[d, n]$ of a partial order $\sP$ on $\NNplus$
is $\sP[d, n] = \{ e: d\preccurlyeq_{\sP} e \preccurlyeq_{\sP} n\}$, and is empty if $d \not\preccurlyeq_{\sP} n$.) 
The next result determines the range of $a$ for which a  given $(d, n)$ belongs to  the $a$-floor quotient partial order.
To state it, we define the {\em scaling set} $\sS(d, n)$ of a given $(d, n)$ as 
\[ 
	\sS(d, n) := \{ a\in \NNplus  : \,\, d \fquo{a} n  \} .
\]
We show the following result.

%%%%%%%%%%%%%%%
%
% Theorem 1.5
%
%%%%%%%%%%%%%%%

\begin{thm}[Structure of scaling sets]
\label{thm:scaling-trichotomy0}
For positive integers $d$ and $n$, the set $\sS(d,n)$ of all $a$ where $d \ADa n$ is either $\NNplus$, or
an initial  interval $\sA[1, m]$  of the additive order $\sA = (\NNplus, \leq)$, or the empty set. 
The  following trichotomy holds.
\hfill
\begin{enumerate}[(i)]
\item
If $d \mid n$, then $d \ADa n$ 
for all $a \in \NNplus$.

\item
If  $d \nmid n$ but $d \AD n$, then $d \ADa n$ if and only if $\displaystyle a < \frac{\floor{n/d}}{\delta(d, n)}$,
where $\delta(d,n) = n - d \floor{n/d}$.

\item 
If $d \notAD n$, then $d \notADa n$ for all $n \in \NNplus$.
\end{enumerate}
\end{thm}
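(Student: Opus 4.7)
The plan is to distill a single numerical criterion for $d \ADa n$ and then read off the three cases by specializing.

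First, I would apply the cutting property: by definition, $d \ADa n$ iff $ad = \floor{an/k'}$ for some positive integer $k'$. If $d > n$ then $\floor{an/k'} \leq an < ad$ rules this out, so $d \notADa n$ for every $a$. Assume henceforth $d \leq n$. The floor identity rearranges to $\tfrac{an}{ad+1} < k' \leq \tfrac{n}{d}$, and since we only need existence of a positive integer $k'$ in this interval, it suffices to test the largest candidate $k' = \floor{n/d}$. This produces the criterion $\floor{n/d} > \tfrac{an}{ad+1}$, which after clearing denominators becomes
\[
  (ad+1)\floor{n/d} > an \quad\Longleftrightarrow\quad \floor{n/d} > a\bigl(n - d\floor{n/d}\bigr) = a\cdot \delta(d,n).
\]
So the central equivalence (assuming $d \leq n$) is
\[
  d \ADa n \quad\Longleftrightarrow\quad \floor{n/d} > a\cdot \delta(d,n).
\]

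Second, I would specialize to obtain the trichotomy.
\emph{(i)} If $d \mid n$, then $d \leq n$ and $\delta(d,n) = 0$, so the criterion reduces to $\floor{n/d} \geq 1$, which always holds; hence $\sS(d,n) = \NNplus$.
\emph{(ii)} If $d \nmid n$ but $d \AD n$, then $d \leq n$ and $\delta(d,n) \geq 1$, and the criterion reads $a < \floor{n/d}/\delta(d,n)$. Since the $a=1$ instance recovers $d \AD n$, the bound exceeds $1$, so $\sS(d,n)$ is the nonempty initial interval $\sA[1,m]$ of the additive order cut off by this threshold.
\emph{(iii)} If $d \notAD n$, then either $d > n$ (handled above) or $d \leq n$ with $\floor{n/d} \leq \delta(d,n)$. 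In the latter case $a\cdot \delta(d,n) \geq \delta(d,n) \geq \floor{n/d}$ for every $a \geq 1$, so the criterion fails and $\sS(d,n) = \emptyset$.

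The main obstacle is the clean derivation of the central equivalence; once it is in place, the trichotomy is a short computation. Care is required to maintain the strict inequality $a < \floor{n/d}/\delta(d,n)$ throughout (this reflects the strict bound $an < (ad+1)k'$ in the floor identity, so that $\leq$ cannot replace $<$), and to handle the degenerate case $d > n$ separately rather than trying to fold it into the single inequality.
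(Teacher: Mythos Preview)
Your proof is correct and follows essentially the same route as the paper. The only cosmetic difference is that the paper quotes the reciprocal-duality characterization (Theorem~\ref{thm:a-equiv-properties}(6)) to obtain the inequality $n/\floor{n/d} < d + 1/a$ and rearranges, whereas you start from the cutting property (1) and re-derive that the maximal admissible $k'$ is $\floor{n/d}$; both paths land immediately on the criterion $\floor{n/d} > a\,\delta(d,n)$, from which the trichotomy and the initial-interval structure of $\sS(d,n)$ are read off directly.
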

\noindent This result is proved in Section 4 as Theorem \ref{thm:scaling-trichotomy}. 

We use Theorem \ref{thm:scaling-trichotomy0} to obtain a result on stabilization for an interval of the order.

%%%%%%%%%%%%%%%
 %
% Theorem 1.6
%4.7
%%%%%%%%%%%%%%%
\begin{thm}[$a$-floor quotient  order stabilization]
\label{thm:interval-hierarchy-10} 
The $a$-floor quotient partial order on the  interval $\ADaset[d,n]$
%whenever $a \ge \floorfrac{n}{d}$,
is identical  to the  divisor partial order on the  interval $\sD[d,n]$
when $a \ge \floorfrac{n}{d}$.
That is,
\[
	(\ADaset [d,n], \ADa)  = ( \sD[d,n], \mid \,)  \qquad \text{for all}  \quad a \ge \floor{ \frac{n}{d}}.
\]
If in addition $d \mid n$,
then  the $a$-floor quotient order on the interval $\ADaset[d, n]$ is  identical  to 
the divisor partial order on $\sD[d,n]$
when  $a \ge \floorfrac{n}{d+1}$. 
\end{thm}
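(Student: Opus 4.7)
The plan is to invoke the scaling trichotomy (Theorem~\ref{thm:scaling-trichotomy0}) pair by pair. The engine of both claims is the contrapositive of case (ii): if $e \ADa f$ with $e \nmid f$, then $a < \floor{f/e}/\delta(e,f)$, so any upper bound on $\floor{f/e}/\delta(e,f)$ that is at most $a$ forces $e \mid f$ by contradiction. Both parts of the theorem reduce to establishing such bounds on the pairs arising inside the interval.

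For the first claim ($a \geq \floor{n/d}$), I would prove the set equality $\ADaset[d,n] = \sD[d,n]$ and order agreement in one stroke. The inclusion $\sD[d,n] \subseteq \ADaset[d,n]$ is immediate from trichotomy (i), which says divisibility implies the $a$-floor quotient relation. For the reverse inclusion and order agreement, any pair $e \ADa f$ with $e, f$ in the interval (whether of the form $(d,e)$, $(e,n)$, or a general $(e,f)$) satisfies $d \leq e \leq f \leq n$ because $\ADa$ is refined by $\leq$. If $e \nmid f$, then $\delta(e,f) \geq 1$ gives $\floor{f/e}/\delta(e,f) \leq \floor{f/e} \leq \floor{n/d}$, contradicting the trichotomy bound and forcing $e \mid f$.

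For the second claim ($d \mid n$, $a \geq \floor{n/(d+1)}$), I would proceed in two structural steps that exploit the divisibility of $n$ by $d$. Step 1 shows every $e \in \ADaset[d,n]$ satisfies $e \mid n$: if $e \nmid n$, then $e \neq d$ (else $d \mid n$ would force $e \mid n$), so $e \geq d+1$, giving $\floor{n/e}/\delta(e,n) \leq \floor{n/(d+1)} \leq a$, a contradiction. Step 2 shows such $e$ also satisfies $d \mid e$: given $e \mid n$ but $d \nmid e$, then $e \neq n$, so $e$ is a proper divisor of $n$, hence $e \leq n/2$; this yields $\floor{e/d}/\delta(d,e) \leq \floor{n/(2d)} \leq \floor{n/(d+1)}$ using $2d \geq d+1$. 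Order agreement on $\sD[d,n]$ follows from the same bound: for $e \ADa f$ with $e \nmid f$, the subcase $e = d$ immediately gives $d \mid f$ from $f \in \sD[d,n]$, while $e > d$ combined with $d \mid e$ yields $e \geq 2d$, so $\floor{f/e}/\delta(e,f) \leq \floor{n/(2d)} \leq \floor{n/(d+1)} \leq a$, again a contradiction.

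The main obstacle is the case-splitting in the second part, especially the reduction to $e \geq 2d$ when $e > d$ and $d \mid e$, which is precisely what creates the room for the improved bound $\floor{n/(d+1)}$ in place of $\floor{n/d}$. Once the scaling trichotomy is granted there are no deep surprises; the substantive content lies entirely in choosing the right interval constraints to extract a bound of $\floor{n/d}$ or $\floor{n/(d+1)}$ on $\floor{f/e}/\delta(e,f)$.
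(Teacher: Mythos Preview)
Your proof is correct and follows essentially the same route as the paper: both arguments reduce everything to the contrapositive of case (ii) of the scaling trichotomy and extract the bounds $\floor{n/d}$, $\floor{n/(d+1)}$, and $\floor{n/(2d)}$ from the same interval constraints. Your organization differs slightly in that you first establish set equality $\ADaset[d,n]=\sD[d,n]$ (Steps 1 and 2) and then check order agreement using $d\mid e \Rightarrow e\geq 2d$, whereas the paper treats all bad pairs $(e,m)$ at once and splits on whether $m=n$ (forcing $e\geq d+1$) or $m<n$ (forcing $m\leq n/2$ via the cutting-length $k\geq 2$); but the underlying case analysis and the resulting inequalities are the same.
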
  
\noindent The result is proved in Section 4 as Theorem \ref{thm:interval-hierarchy-1}.

We may reinterpret these results as making statements about the
two-variable zeta functions of these partial orders as $a$ varies.
The two-variable partial order zeta function $\zeta_{\ADset_a}: {\NNplus \times \NNplus} \to \ZZ$
of the $a$-floor quotient order $\ADset_a$
 is given by
\begin{equation}
	\zeta_{\ADset_a}(d, n) = \begin{cases}
	1 & \quad \mbox{if} \quad d \ADa n, \\
	0 & \quad  \mbox{otherwise}.
	\end{cases}
\end{equation} 
In the sequel we abbreviate notation to rename this function $\zeta_a(d, n)$. 
Theorem \ref{thm:hierarchy-00}  asserts  that  as $a \to \infty$,
the zeta functions  $\zeta_a(\cdot, \cdot)$ converge pointwise 
to the two-variable partial order zeta function of the divisor order $\sD$, which is
\begin{equation}
\zeta_{\sD}(d, n) = \begin{cases}
1 & \quad \mbox{if} \quad d\mid n\\
0 & \quad  \mbox{otherwise}.
\end{cases}
\end{equation} 
Since Theorem \ref{thm:hierarchy-00} says the  divisor order $\sD$ corresponds to the limit  $a= +\infty$,  we may rename  it  $\sQ_{\infty}$,
and  abbreviate  notation of $\zeta_{\sD}(d, n)$ to  $ \zeta_{\infty}(d, n)$.
Theorem \ref{thm:interval-hierarchy-10}  asserts that the convergence for $\zeta_a$ to $\zeta_{\infty}$ is  monotonically downward pointwise as $a$ increases,
with  the convergence on $\{1, \ldots, n\}$ stabilized for all $a \ge n$. 
 
A related topic concerns  the behavior of  the \Mobius{} function of the $a$-floor quotient partial orders.
The partial order \Mobius{} function is the inverse of the zeta function in the incidence algebra of the partial order, under convolution. 
The results above imply that the \Mobius{} functions of the $a$-floor quotient partial orders,
denoted $\muADa(\cdot, \cdot)$, 
converge pointwise (as $a$ increases) to the \Mobius{} function $\mu_{\sD}(\cdot, \cdot)$ of the divisor order.
The function $\mu_{\sD}$ is given by
\begin{equation}
\mu_{\sD}(d, n) = \begin{cases}
\mu(\frac{n}{d}) & \quad \mbox{if} \quad d \mid n,\\
0 & \quad  \mbox{otherwise},
\end{cases}
\end{equation} 
where $\mu(n)$ is the classical \Mobius{} function.
%%%%%%%%%%%%%%%%%%%%%%%%%%%%%%%%%%%%%%%
%(The dilation symmetry of the divisor order permits reduction of the two-variable zeta
%function of the divisor order to a one-variable  function.) 
%%%%%%%%%%%%%%%%%%%%%%%%%%%%%%%%%%%%%%%%
Since the $a$-floor quotient relation $\ADa$ converges to
the divisor relation as $a \to \infty$, 
for each $(m,n)$ the \Mobius{} value $\muADa(m,n)$
converges to $\mu_\sD(m,n)$ as $a \to \infty$. 
The  convergence property as $a \to \infty$
motivates  study of the $a$-floor quotient \Mobius{} functions as a possible source of new insight
into the behavior of the classical \Mobius{} function.

We do not  study  the \Mobius{} function $\muADa$ in any detail in this paper, making only a few
remarks in Section \ref{sec:concluding}. The \Mobius{} value $\mu_{a}(d,n)$ always stabilizes  at 
$\mu_{\sD}(d,n)$ for $a \ge \floorfrac{n}{d}$,
Example~\ref{exa:mu-13} and Example  \ref{exa:mu-21} show that  the convergence 
to $\mu_{\sD}(d,n)$ is generally not monotonic in $a$.

%%%%%%%%%%%%%%%%%%%%%%%%%%%%%%%%%%%%%%%%%%%%%%%%%%%%%%%%%%%%%%
%
% section 1.3 $a$-floor quotients and complementation
%
%%%%%%%%%%%%%%%%%%%%%%%%%%%%%%%%%%%%%%%%%%%%%%%%%%%%%%%%%%%%%
\subsection{Structure of \asafe-floor quotient initial intervals and almost complementation}\label{subsec:12} 

The {\em $n$-floor reciprocal map} $\J_n(k) = \floorfrac{n}{k}: \sA[1, n] \to \sA[1,n]$ acting on the additive
partial order interval $\sA[1,n] = \{1, \ldots, n\}$, leaves both
the partial order intervals  $\ADset_1[1,n]$ and $\sD[1,n]$ invariant, and it acts as an involution on $\ADset_1[1,n]$
and $\sD[1,n]$. 
%%%%%%%%%%%%%%%%%%%%%%%%%%%%%%%%%%%%%%%%%%%%%%%%%%%%%%%%%%
%The map $\J_n$ is order-reversing for the additive order and for the divisor order inside $\sA[1,n]$.
%%%%%%%%%%%%%%%%%%%%%%%%%%%%%%%%%%%%%%%%%%%%%%%%%%%%%%%%%
Under this map $\J_n$ the  initial intervals  $\ADset_1[1,n]$
of the  $1$-floor quotient order have an {\em almost complementation} property, described in Section \ref{subsec:23a}. 
Namely, we have $\ADset_1[1,n]= \ADsetsmall(n) \cup \ADsetlarge(n)$
where the two sets of ``small''  elements $\ADsetsmall(n)$ and ``large'' elements $\ADsetlarge(n)$
have at most one element in common, and if there is a common  element, it  is $s = \floor{\sqrt{n}}$.
The almost complementation property states that: the two sets $\ADsetlarge(n)$ and  $\ADsetsmall (n)$ are the same size, with $\J_n$
giving a bijective map between them, see Section \ref{subsec:23b}. 

We can define  analogues of the almost complementing sets:
for $a\ge 1$ we define  
\begin{equation*} 
\ADasetlarge(n) := \ADsetlarge(n) \cap \ADaset[1,n]
 \quad \mbox{and} \quad
\ADasetsmall(n) := \ADsetsmall(n) \cap \ADaset[1,n]. 
\end{equation*}
Using this definition, 
for each fixed $a \ge 2$ we find that the almost complementation property breaks down for $a$-floor quotients
on initial  intervals $\ADaset[1,n]$, for sufficiently large $n$.  
That is, for each $a \ge 2$, the  $n$-floor reciprocal map $\J_n$ does not preserve the interval $\ADaset[1,n]$. 
We show that the map $\J_n$ always gives an injection $\J_n: \ADasetlarge(n) \to \ADasetsmall(n)$,
so that in cardinality $\verts{ \ADasetlarge(n) } \le \verts{ \ADasetsmall(n) }$.
For $ a \ge 2$, this inequality is generally strict, as shown by example for $a=2$ in Figure \ref{fig:51} in Section \ref{subsec:52b}. 
We conclude there is an asymmetry for $a \ge 2$, that $\ADasetlarge(n)$ is  in general smaller than $\ADasetsmall(n)$, 
and  we give a heuristic to suggest a quantitative version of this asymmetry.

We study  the sizes of  $\ADasetsmall(n)$ and $\ADasetlarge(n)$.
In Section~\ref{subsec:51b} we prove that $\verts{ \ADasetsmall(n) } \ge \floor{\sqrt{\frac{n}{a}}}$ and that
 $\verts{ \ADset_{a} [1, n] } > \sqrt{\frac{n}{a}}$ for all $n > 1$.
 In  Section~\ref{subsec:52b}  we formulate a heuristic probabilistic argument suggesting that 
   $\ADaset^{-}(n)$ should have expected size about ${(\frac{2}{\sqrt{a}} - \frac{1}{a}) \sqrt{n}}$
 while $\ADasetlarge(n)$ should have expected size  about $\frac{1}{a} \sqrt{n}$.  
The heuristic predicts $\verts{ \ADaset [1, n]}$ should have expected size about 
$ 2 \sqrt{\frac{n}{a}}$. 
Numerical data for $a = 2$ for small $n$ is given which supports these predictions,
 and this data exhibits significant fluctuations of $\verts{\ADaset[1,n]}$ around any expected size function.
The heuristic argument given in Section~\ref{subsec:52b}, if interpreted as an expected value estimate, suggests that
\[
	\liminf_{n \to \infty}  \frac{ \verts{ \ADasetlarge (n) }}{\verts{ \ADasetsmall (n) }} \le \frac{1}{2\sqrt{a} - 1},
\]
may hold for fixed $a \ge 1$.
%%%%%%%%%%%%%%%%%%%%%%%%%%%%%%%%%%%%%
%It even remains (conjecturally) possible  that  the $\liminf$ can be replaced with a limit.
%%%%%%%%%%%%%%%%%%%%%%%%%%%%%%%%%%%%%

In Section~\ref{sec:7} we prove an averaged version of this heuristic. 

%%%%%%%%
% Theorem 1.7
%%%%%%%%%
\begin{thm}[Average value of $a$-floor initial interval, variable $n$]\label{thm:16aa}
For all integers $a \ge 1$ and all $x \ge 1$,
\begin{equation} 
\label{eqn:intro-averaged}
	\frac{1}{x} \sum_{n=1}^x \verts{ \ADaset[1,n] } = \frac{4}{3} \sqrt{\frac{x}{a}} + O \left( \log x + a \right),
\end{equation}
in which the $O$-constant is independent of $a$ and $x$.
\end{thm}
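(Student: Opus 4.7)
The plan is to interchange summation: $S(x) := \sum_{n=1}^x \verts{\ADaset[1,n]} = \sum_{d \ge 1} T_d(x)$, where $T_d(x) := |\{n \le x : d \ADa n\}|$, and to evaluate each $T_d(x)$ via the remainder decomposition $n = qd + r$ with $0 \le r < d$. The first ingredient is a clean arithmetic criterion derived from the cutting property: $d \ADa n$ holds iff the real interval $\bigl(an/(ad+1),\, n/d\bigr]$ contains an integer, which rearranges to $\floor{n/d} > a\bigl(n - d\floor{n/d}\bigr)$. Setting $q = \floor{n/d}$ and $r = n - dq$, this reduces to $q > ar$, giving the exact formula
\[
	T_d(x) = \floor{x/d} + \sum_{r=1}^{d-1} \bigl(\floor{(x-r)/d} - ar\bigr)_+.
\]

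Next I would split the $d$-sum at the threshold $D_0 = D_0(x,a)$, defined as the largest integer $d$ satisfying $(d-1)(ad+1) \le x$; the quadratic formula yields $D_0 = \sqrt{x/a} + O(1)$. For $d \le D_0$ every term in the $r$-sum is automatically non-negative, and the elementary identity $\sum_{r=0}^{d-1} \floor{(x-r)/d} = x+1-d$ gives the closed form $T_d(x) = x + 1 - d - \tfrac{1}{2} a d(d-1)$. Summing over $d \le D_0$, using $\sum_{d \le D_0} d(d-1) = \tfrac{1}{3} D_0(D_0^2-1)$, contributes $\tfrac{5}{6}\, x^{3/2}/\sqrt{a} + O(x)$. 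For $d > D_0$ only the remainders $r \le R_d := \floor{x/(ad+1)}$ contribute; a parallel evaluation will give leading behavior $T_d(x) \sim x^2/(2ad^2)$, and summing via $\sum_{d > D_0} 1/d^2 = 1/D_0 + O(1/D_0^2)$ produces the complementary main term $\tfrac{1}{2}\, x^{3/2}/\sqrt{a}$. The two contributions sum to $(\tfrac{5}{6} + \tfrac{1}{2})\, x^{3/2}/\sqrt{a} = \tfrac{4}{3}\, x^{3/2}/\sqrt{a}$, which is the claimed main term.

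The main obstacle will be a careful uniform error analysis in the large-$d$ regime. The approximations $R_d = x/(ad+1) + O(1)$ and $\floor{x/d} = x/d + O(1)$ propagate into $T_d(x)$ as corrections of size $O(x/d)$, whose summation over $D_0 < d \le x$ contributes $O(x \log(x/D_0)) = O(x \log x + x \log a)$. A separate range $d > (x-1)/a$, where $R_d = 0$ and $T_d(x) = \floor{x/d}$, will need to be handled directly: $\sum_{d > (x-1)/a} \floor{x/d} = O(x \log a) = O(xa)$. Both errors fit inside the claimed total $O(x \log x + xa)$, which after dividing by $x$ yields the stated bound $O(\log x + a)$ with implied constants uniform in $a$ and $x$.
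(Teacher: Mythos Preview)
Your proposal is correct and follows essentially the same route as the paper: interchange the double sum to count $a$-floor multiples of each $d$ up to $x$ via the remainder criterion (the paper's Lemma~6.2 and Proposition~7.3 give exactly your formula for $T_d(x)$), then split the $d$-range at the phase transition near $\sqrt{x/a}$ and estimate each piece. The only cosmetic difference is that the paper further subdivides the range $d>\sqrt{x/a}$ at $\sqrt{x}$ into ``medium'' and ``large'' pieces (Lemmas~7.6 and~7.7) to mirror its earlier heuristic, whereas you handle $d>D_0$ in one sweep; your two-region version with the exact closed form $T_d(x)=x+1-d-\tfrac{1}{2}ad(d-1)$ in the small region is if anything a little cleaner.
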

%\harry{TODO: check theorem claim}
Theorem~\ref{thm:16aa} is proved, relabeled  as Theorem~\ref{thm:52}, in Section~\ref{sec:7}.

The proof of Theorem \ref{thm:52} makes use of a classification of integers which are $a$-floor multiples of a given $d$, 
given in Section \ref{sec:a-floor-multiples}. 
The set of $a$-floor multiples $\ADmult_A(d)$ of $d$ is  
given by$\ADmult_a(d) := \{ n \in \NNplus : \, d \ADa n\}$.

%%%%%%%%%%%
%
% Theorem 1.8
%
%
%%%%%%%%%%%
\begin{thm}[Numerical semigroup of $a$-floor multiples]
\label{thm:a-floor-multiple-struct-0}
The set $\ADmult_a(d)$ of $a$-floor multiples of $d$
is a numerical semigroup.
It has the following properties.
\begin{enumerate}[(1)]
\item
The largest integer not in $\ADmult_a(d)$, its Frobenius number,  is 
$(d-1)(ad+1)$.

\item 
The number of positive integers not in $\ADmult_a(d)$ is $\frac12(d-1)(ad+2)$.

\item 
The minimal generating set of $\ADmult_a(d)$ is
$
\angles{ \gamma_0, \gamma_1, \ldots, \gamma_{d - 1} }
$
where 
\[
	\gamma_j = (ja + 1) d  + j = j(ad+1) + d \quad \mbox{for} \quad 0 \le j \le d - 1.
\]
\end{enumerate}
\end{thm}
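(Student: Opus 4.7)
The plan is to convert the trichotomy of Theorem~\ref{thm:scaling-trichotomy0} into an explicit membership criterion for $\ADmult_a(d)$ parametrized by residue class modulo $d$, and then read off all three assertions from that description.

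First I would fix the parametrization $n = qd + r$ with $0 \le r < d$. For $r = 0$ part~(i) of Theorem~\ref{thm:scaling-trichotomy0} gives $n \in \ADmult_a(d)$ whenever $q \ge 1$. For $r > 0$ I first verify that $d \AD n$ holds if and only if $q > r$: any integer $k$ satisfying $\floor{n/k} = d$ must lie in $(n/(d+1), n/d]$, and since $\floor{n/d} = q$ this interval contains an integer iff $q > n/(d+1)$, which simplifies to $q > r$. Part~(ii) of Theorem~\ref{thm:scaling-trichotomy0} then adds the condition $a < q/r$, i.e.\ $q \ge ar + 1$; since $a \ge 1$ this subsumes the $d \AD n$ requirement. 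Hence
\[
n = qd + r \in \ADmult_a(d) \iff \begin{cases} q \ge 1 & \text{if } r = 0,\\ q \ge ar + 1 & \text{if } 0 < r < d.\end{cases}
\]

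From this residue-class description the rest is essentially bookkeeping. For each $j \in \{0, 1, \ldots, d-1\}$ the smallest element of $\ADmult_a(d)$ with residue $j$ modulo $d$ is exactly $\gamma_j = (ja+1)d + j$ (taking $\gamma_0 = d$). Every element of residue $j$ in $\ADmult_a(d)$ then has the form $\gamma_j + k \gamma_0$ with $k \ge 0$, so $\{\gamma_0, \ldots, \gamma_{d-1}\}$ generates $\ADmult_a(d)$. Closure under addition follows from a case split on whether $r_1 + r_2 < d$ or $r_1 + r_2 \ge d$, in each case checking that the $q$-bound for the sum is satisfied. Since $\gcd(\gamma_0, \gamma_1) = \gcd(d, d(a+1)+1) = 1$, the complement of $\ADmult_a(d)$ in $\NNplus$ is finite, so $\ADmult_a(d)$ is a numerical semigroup. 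Minimality of the generating set follows by a parallel case analysis: writing $\gamma_j = s_1 + s_2$ with $s_1, s_2$ nonzero in the semigroup and splitting on the residues $r_1, r_2$ forces $s_1 + s_2 > \gamma_j$ in every case, so $\gamma_j$ is irreducible. Every non-generator element is visibly of the form $\gamma_r + k\gamma_0$ with $k \ge 1$, hence reducible.

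For the Frobenius number in~(1), the largest gap of residue $r > 0$ is $ar \cdot d + r = r(ad+1)$, maximized over $r \in \{1, \ldots, d-1\}$ at $(d-1)(ad+1)$. For the gap count in~(2), the gaps of residue $r \in \{1, \ldots, d-1\}$ are $\{qd + r : 0 \le q \le ar\}$, contributing $ar + 1$ each, and summing gives $\sum_{r=1}^{d-1}(ar + 1) = \tfrac{1}{2}ad(d-1) + (d-1) = \tfrac{1}{2}(d-1)(ad + 2)$. The only substantive step in the argument is the equivalence $d \AD n \iff q > r$ for $r > 0$; once this is combined with Theorem~\ref{thm:scaling-trichotomy0}, the remaining assertions reduce to elementary arithmetic with the explicit residue-class description.
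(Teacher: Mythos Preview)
Your proof is correct and arrives at the same membership criterion as the paper (in the paper's notation, $n = kd + j$ with $0 \le j < d$ lies in $\ADmult_a(d)$ iff $aj < k$), but the route is genuinely different in a couple of places.

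First, you obtain the criterion by invoking Theorem~\ref{thm:scaling-trichotomy0} as a black box, supplementing it with the direct check that $d \AD n \iff q > r$. The paper instead derives the criterion from scratch in Lemma~\ref{lem:a-multiples-elem}, using the $a$-reciprocal-duality characterization of Theorem~\ref{thm:a-equiv-properties}(6). Your path is slightly more economical given what has already been proved; the paper's path keeps Section~\ref{sec:a-floor-multiples} self-contained relative to the six equivalences.

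Second, for closure under addition you do a direct residue case split. The paper's Lemma~\ref{lem:a-multiples-add} instead observes that $n \mapsto an$ identifies $\ADmult_a(d)$ with $a\NNplus \cap \ADmult_1(ad)$, an intersection of two sets already known to be additively closed. This is a clean reduction to the $a=1$ case, avoiding the case analysis.

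Third, your minimality argument shows each $\gamma_j$ is irreducible by the same residue case split; the paper instead checks that $\gamma_i - \gamma_\ell \notin \ADmult_a(d)$ for $\ell < i$, which is a one-line application of the membership criterion. The computations for parts~(1) and~(2) are essentially identical in both treatments.
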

Theorem \ref{thm:a-floor-multiple-struct-0} is proved as  Theorem \ref{thm:a-floor-multiple-struct} in Section \ref{sec:a-floor-multiples}.
Its proof generalizes  the $a=1$ case proved in \cite[Theorem 4.2]{LagR:23a}.
 
 %%%%%%%%%%%%%%%%%%%%%%%%%%%%%%%%%%%%%%%%%%%%%%%%%%%%%
%Since the divisibility lattice is the scaling limit of the $a$-floor quotient relations as $a \to \infty$,
%it is useful to  compare   structural properties of the $a$-floor quotient orders with those of the  divisor order.
%We will present  such a comparison  in Section \ref{sec:2}.
 %%%%%%%%%%%%%%%%%%%%%%%%%%%%%%%%%%%%%%%%%%%%%%%%%%%%%

%%%%%%%%%%%%%%%%%%%%%%%%%%%%%%%%%%
%
% Subsection 1:4 Prior work
%
%%%%%%%%%%%%%%%%%%%%%%%%%%%%%%%%%%%%%%%%%
\subsection{Prior work}\label{subsec:14} 

The study of the $1$-floor quotient relation in \cite{LagR:23a} was motivated  by 
work of  J.-P. Cardinal \cite{Cardinal:10}.
The main topic of Cardinal's paper is
the construction
 of a commutative  quotient algebra $\mathbf{\sA}= \mathbf{\sA}_n$ of integer matrices, 
 for each $n \ge 1$,  
whose  rows and columns  are indexed by the
floor quotients of $n$, which form  the initial interval $\ADset_1[1,n]$ of the $1$-floor quotient partial order. 
Cardinal~\cite{Cardinal:10} showed that certain matrices  $\sM = \sM_n$ of the algebra $\sA_n$ have explicit connections with the Mertens function  
$M(n) = \sum_{j=1}^n \mu(j)$ 
and he related behavior of the norm of his matrices to the Riemann hypothesis \cite[Theorem 24]{Cardinal:10},
see also \cite{CardinalO:20}.

%%%%%%%%%%%%%%%%%%%%%%%%%%%%%%%%%%%%%%%%%%%%%%%%%%%%%
%Cardinal's work has the feature  that the  algebras $\mathbf{\sA}_n$ for different $n$ are not compatible with each other.
 %The present paper  studies that part of the underlying structure that extends compatibly to  all $n \in \NNplus$,
 %%%%%%%%%%%%%%%%%%%%%%%%%%%%%%%%%%%%%%%%%%%%%%%%%%%% 

Cardinal proved the commutativity of the algebra $\mathbf{\sA}_n$ using a floor function identity which states the commutativity of certain dilated floor functions acting on the real line~\cite[Lemma 6]{Cardinal:10}:
%is not obvious from the cutting property (1).
for positive integers $k, \ell $ one has 
the identities
\begin{equation}
\label{eq:floor-dilation-commute}
\floor{ \frac{1}{k} \floor{ \frac{n}{\ell} }} 
= \floor{ \frac{1}{\ell} \floor{ \frac{n}{k} }} 
= \floor{ \frac{n}{k\ell} }.
\end{equation}
These identities are the basis of the existence of the  floor quotient partial order; they imply  the transitivity property of the floor quotient partial order. 
In  \cite{LMR:16}, the authors with T.  Murayama  classified  pairs
of dilated floor functions $(\floor{\alpha x} , \floor{\beta x})$
that commute under composition as functions on $\RR$.
They showed  that
the identities \eqref{eq:floor-dilation-commute} are  quite special: 
they are the only nontrivial cases
where the commutativity property of dilated floor functions occurs.
(The trivial cases are
where $\alpha=0$, or $\beta=0$ or $\alpha=\beta$.)

The existence of Cardinal's algebra seems related to the existence of the complementation structure in
the $1$-floor quotient partial order. We do not know of  an analogue of his algebra for the $a$-floor quotient
order for $a \ge 2$.

 %%%%%%%%%%%%%%%%%%%%%%%%%%%%%%%%%%
%
% Subsection 1:5 Topics
%
%%%%%%%%%%%%%%%%%%%%%%%%%%%%%%%%%%%%%%%%
\subsection{Contents of paper}
\label{subsec:15}

 This paper has two main observations.

(a) As   $a \to \infty$  the $a$-floor quotient  orders converge monotonically pointwise  (in a precise sense) to 
the divisor order on $\NNplus$. We obtain quantitative information on the rate of convergence.

(b) The $1$-floor quotient order $\ADset_1$ and the divisor order $\sD$ have an almost complementation symmetry on initial intervals of their respective orders, 
but the $a$-floor quotient orders $\ADset_a$ for $a \ge 2$ break this almost complementation symmetry.  
We obtain quantitative information on  the amount of asymmetry. 

A more detailed description follows.

\begin{enumerate}
\item
%Sec 2
Section \ref{sec:2}  recalls results on the $1$-floor quotient order and the divisibility order on $\NNplus$, which is a distributive lattice.
The divisibility order has a dilation symmetry which is broken  for the $1$-floor quotient order, 
whose breaking leads to the existence of the $a$-floor quotient orders  for $a \ge 2$.

\item
% Sec 3
Section \ref{sec:a-floor-orders}  gives  six equivalent characterizations of the $a$-floor quotient relation in Theorem \ref{thm:a-equiv-properties}.
Section \ref{subsec:33} shows that  if $a < b$ in the additive order, then the order $(\NNplus, \ADa)$ strictly refines $(\NNplus, \ADof{b})$.

\item
%Sec 4
Section~\ref{sec:4} studies the variation of the structure of $a$-floor quotient relations, as $a \geq 1$ varies.
We describe the set of $a$ for which a given pair $(d,n)$ satisfies $d \ADa n$, 
showing that  this set is an initial interval in the additive order.  
We determine a quantitative bound on the rate at which  the $a$-floor quotient orders converge to the divisor order as $a \to \infty$ on $\{1, \ldots, n\}$, see Theorem \ref{thm:interval-hierarchy-1}.
We introduce the {\em stabilization index} of an interval $\ADset_1[d,n]$ as the least value of $a$ where the ($\!\ADa\!$)-interval is order-isomorphic to $\sD[d,n]$.

\item
% Sec 5
Section \ref{sec:a-duality} studies  the size and structure of the initial intervals $\ADaset[1,n]$ 
for fixed $a$. We give a decomposition $\ADaset[1,n] = \ADasetsmall(n) \cup \ADasetsmall(n)$.
The $n$-floor reciprocal map
$\J_n: \ADasetlarge(n) \to \ADasetsmall(n)$  is well-defined and injective.
In particular, $\verts{ \ADasetlarge(n) } \le \verts{ \ADasetsmall(n) }$
and we show by example that this inequality is generally strict for $a \ge 2$.
We show  $\verts{ \ADaset[1,n] } >  \sqrt{\frac{n}{a}}$. 
We present data for $a = 2$ showing that the values $\verts{ \ADaset[1,n] }$ are
not monotone in $n$ and show considerable fluctuation.
We formulate a heuristic argument suggesting that (on average) $\verts{ \ADaset[1,n] }$ should
be of size about $ 2 \sqrt{\frac{n}{a}}$. 

\item
% Sec 6
Section \ref{sec:a-floor-multiples}  shows in Theorem \ref{thm:a-floor-multiple-struct} 
that the  set of  $a$-floor multiples of a given integer $d$
forms a numerical semigroup, and  characterizes the resulting class of numerical semigroups. 

\item
% Sec 7
 Section \ref{sec:7} studies the average size of $\verts{ \ADaset[1,n] } $ for $1 \le n \le x$.
It shows this average size is asymptotic to 
%$ | \ADaset[1,n] |
$ \frac{4}{3} \sqrt{\frac{x}{a}}$ as $x \to \infty$. 
This answer is compatible with the heuristic for the size $\verts{ \ADaset [1,n] }$ of expected size $2 \sqrt{\frac{n}{a}}$ given in Section \ref{sec:a-duality}.
\item
% Sec 8
% The final section  of the paper, 
 Section \ref{sec:concluding}
 addresses areas for further work. 
 %sublist
\begin{enumerate}[(i)]
\item Study of  the \Mobius{}
function of the $a$-floor quotient order; 

\item  Structure of the incidence algebra of the $a$-floor quotient poset; 

\item  Possible existence of a family of approximate divisor orders interpolating between the $1$-floor quotient order and the additive total order on $\NNplus$.
\end{enumerate} 

\end{enumerate}

%%%%%%%%%%%%%%%%%%%%%%%%%%%%%%%%%%%%%%%%%%%%%%%%%%%%%%%%%%%%%%%%%%%%%%%%%%%%%%%%%%%
%
% section 2:  Preliminary results
%
%%%%%%%%%%%%%%%%%%%%%%%%%%%%%%%%%%%%%%%%%%%%%%%%%%%%%%%%%%%%%%%%%%%%%%%%%%%%%%%%%%%
\section{Preliminary results on 1-floor quotient order}
\label{sec:2} 

We review properties of the $1$-floor quotient order and its relation to the divisibility partial order,
shown in \cite{LagR:23a}.
A general reference for facts on partial orders is Stanley~\cite{Stanley:12}*{Chapter 3}.

%%%%%%%%%%%%%%%%%%%%%%%%%%%%%%%%%%
%
% section  2.1 (formerly 1.2) 
%
%%%%%%%%%%%%%%%%%%%%%%%%%%%%%%%%%%%
\subsection{The 1-floor quotient order compared with divisibility order}
\label{subsec:21} 

The {\em divisibility partial order} $\sD := (\NNplus, \;\mid\;)$ is a distributive lattice in the sense of Birkhoff. 
That is, it  has a  well-defined join function $\vee$ (least common multiple), 
and a well-defined meet function $\wedge$ (greatest common divisor),
which obey the distributive law
$x \wedge (y \vee z)= (x \vee y) \wedge (x \vee z)$.
It also has a well-defined rank function with the rank of an element $d$ is the length of a minimal set of covering relations to the minimal element $1$; 
the element $1$ is assigned rank $0$.
We let $\Dset[d, n]$ denote an interval of the divisor order, i.e.
\[
\Dset[d, n] := \{ e \in \NNplus :  \,\, d \divides  e \,\, \mbox{and} \,\, e \divides n\}. 
\]
Each interval of the divisibility partial order with the induced order relation is itself a ranked distributive lattice.

In contrast, it was shown by examining various intervals in \cite[Sect. 2]{LagR:23a} that the  $1$-floor quotient poset $\ADset_1  = (\NNplus, \AD)$
does not have well-defined join (least upper bound)
or meet functions (greatest lower bound), so the $1$-floor quotient partial order on $\NNplus$ is not a lattice. 
In addition, it does not have a well-defined rank function. 

There exist similar examples for the $2$-floor quotient relation on $\NNplus$, showing
that it is not a lattice and does not have a well-defined rank function. 
We expect the same features hold for the $a$-floor quotient relation for each $a \ge 1$.

%%%%%%%%%%%%%%%%%%%%%%%%%%%%%%%%%%%%%%%%%%%%%%%%%%%%%%%%%%%%%
%
% section 2.2 (formerly 1.3)  Scale-Invariance Properties
%
%%%%%%%%%%%%%%%%%%%%%%%%%%%%%%%%%%%%%%%%%%%%%%%%%%%%%%%%%%%%%%
\subsection{Scaling-invariance properties: divisor and 1-floor quotient order}\label{subsec:22} 

The finite intervals $\sD[d,n]$ of the divisor poset $\sD = (\NNplus, \divides)$ have  a scaling-invariance property:
for each pair $(d, n)$, and each $a \geq 1$,  there is an isomorphism of  partial order intervals
\begin{equation}
\sD[d, n] \simeq \sD[ad, an] . 
\end{equation}
For  an interval $I= \sD[d,n]$ with endpoints $(d,n)$ we  call  $w(I) = w(\sD[d,n]) = \frac{n}{d}$ the {\em width } of the interval $I$.
The width is a scale-invariant quantity for the divisor poset: 
any  two intervals having the same width are order-isomorphic.
In particular, intervals with the same width have the same cardinality. 
(If $d \nmid n$ the cardinality is zero.) 

The  nonempty intervals $\ADset_1[d, n]$ of the  $1$-floor quotient order usually break scale-invariance. 
That is, given $d \AD n$, the order interval $\ADset_1[d,n]$ is generally not order-isomorphic to $\ADset_1[ad, an]$;
in particular, the  cardinality of 
$\ADset_1[ad, an]$ varies with $a \ge 1$.  
If $d \nmid n$ while  $ d \AD n$, then $\ADset_1[d, n]$ is nonempty, while  for large enough $a$, we have  $\ADset_1[ad, an] = \emptyset$. 
(See Example~\ref{exa:47} for variation with $a$.)

%%%%%%%%%%%%%%%%%%%%%%%%%%%%%%%%%%%%%%%
%The variation of $a$-floor quotient orders $\ADaset[d,n]$ as a function of $a$,
% provides a quantitative measure of the discrepancy from%scaling-invariance. 
%%%%%%%%%%%%%%%%%%%%%%%%%%%%%%%%%%%%%%%

%%%%%%%%%%%%%%%%%%%%%%%%%%%%%%%%%%%%%%%%%%%%%%%%%%%%%%%%%%%%%%
%
% section 2.3 (formerly 1.4 )Duality and Complementation structures 
%
%%%%%%%%%%%%%%%%%%%%%%%%%%%%%%%%%%%%%%%%%%%%%%%%%%%%%%%%%%%%%
\subsection{Self-duality and almost  complementation properties: divisor order}
\label{subsec:23a}

The divisor poset $\sD = (\NNplus, \divides)$ has two important symmetry properties,
a self-duality property and an (almost) complementation property on each of its finite  intervals. 
We discuss it here for the initial interval $\sD[1,n]$.

\begin{enumerate}
\item[(1)] 
The  divisor poset  has  on each of its 
initial divisor intervals $\Dset[1,n]$  a {\em self-duality property},     
stating that the divisor partial order on $\Dset[1,n]$ is isomorphic to the dual
divisor partial order $\mid^{\ast} $ on $\Dset[1,n]$ given by
\[ 
d_1 \mid^{\ast} d_2\quad  \leftrightarrow \quad d_2 \mid d_1.
\]
That is:
\[
(\Dset[1,n], \divides) \simeq (\Dset[1,n], \;\mid^{\ast}\; ).
\]
This isomorphism  is established using  the {\em $n$-reciprocal map}   $\J_n: \Dset[1,n] \to \Dset[1,n]$ given by
$\J_n(d) = \frac{n}{d}$.
This map  is   {\em order-reversing}:
If $d \divides  e$ in the divisor order on $\Dset[1,n]$ then $ \frac{n}{e} \divides  \frac{n}{d}$. 
The map $\J_n$ is also an {\em  involution}:   $\J_n^{\circ 2}(d)=d$ on the domain $\Dset(1,n)$,
hence  it is a bijection.

\item[(2)]
The divisor partial order $\sD$ has an {\em almost complementation} property
of each interval $\Dset[1,n]$, with respect to the involution $\J_n$.  
It divides the order interval $\Dset[1,n]$  in two almost complementary parts
\[
\Dset[1,n] = \Dset^{-}(n) \bigcup \Dset^{+}(n),
\]
in which
\[
\Dset^{-}(n) = \{ d \in \Dset[1,n]: 1 \le d \le \sqrt{n} \}
\]
and 
\[
\Dset^{+}(n) :=  \{ d \in \Dset[1,n] :  1 \leq \J_n(d) \leq \sqrt{n} \}
\]
It is clear that 
$\J_n (\Dset^{+}(n) ) = \Dset^{-}(n)$
 and $\J_n (\Dset^{-}(n))  = \Dset^{+}(n)$.

We have
\[
\Dset^{+}(n) \bigcap \Dset^{-}(n) 
= \begin{cases}
\{s\} &  \mbox{if} \,\, n = s^2 \\
\emptyset & \mbox{if} \,\, n \ne s^2  \quad \mbox{for any} \,\, s \ge 1.
\end{cases} 
\]
If  $n = s^2$ then the common element $s$ is the unique fixed point 
of the involution $\J_n$. 
\end{enumerate}
The ``almost complementation'' property is  different from the notion of  a complementation on  the (Birkhoff) lattice
$\sD[1,n]$, which requires that each element $a$ of the interval has a complement $a'$ whose join with $a$ is the maximal element $n$
and whose meet is the minimal element $1$. A divisor order interval $\sD[d,n]$ has a complement in the (Birkhoff)  lattice sense if and only if $\frac{n}{d}$ is squarefree.

%%%%%%%%%%%%%%%%%%%%%%%%%%%%%%%%%%%%%%%%%%%%%%%%%%%%%%%%%%%%%%
%
% section 2.4 (formerly 1.4 )Duality and Complementation structures : 1-floor quotients
%
%%%%%%%%%%%%%%%%%%%%%%%%%%%%%%%%%%%%%%%%%%%%%%%%%%%%%%%%%%%%%
\subsection{Self-duality and almost complementation properties: 1-floor quotient order}
\label{subsec:23b}

The $1$-floor quotient order retains  weaker forms  of self-duality and almost complementation properties,
with respect to the {\em $n$-floor reciprocal map} $\J_n: \ADset_1[1,n] \to \ADset_1[1,n]$,
given by 
\begin{equation}
\J_n (k ) = \floor{ \frac{n}{k} }.
\end{equation} 
The map $\J_n$ acts as an involution  on the set $\ADset_1[1,n]$.
The restriction of $\J_n$ to the smaller domain $\sD[1,n]$ of divisors of $n$ agrees with the  $n$-reciprocal map  $\J_n$ defined in Section~\ref{subsec:22}, 
since $\floorfrac{n}{d} = \frac{n}{d}$ for divisors $d$ of $n$.
Moreover, $\ADset_1[1,n]$ is the maximal subset of $\{1,2,\ldots,n\}$ on which $\J_n$ acts as an involution (\cite[Lemma 4.1]{LagR:23a}).

The involution $\J_n$ acting on the initial interval $\ADset_1[1,n]$ of the  floor quotient order is not order-reversing,
unlike its action on the divisor order interval $\sD[1,n]$. 
The map $\J_n$ has instead the  weaker property that:
the image under $\J_n$ of two comparable elements $d \AD e$  in the floor quotient order either reverses their order $\J_n(e) \AD \J_n(d)$ or else gives two incomparable elements, which we write
$\J_n(e) \perp_{1} \J_n(d)$.  
The incomparable case can only occur when $d \le  \sqrt{n}$.

The following result summarizes  the almost complementation action on $1$-floor quotients. 
This result  is due to Cardinal~\cite[Proposition 4]{Cardinal:10}
(stated in a different notation). 
The statement below combines \cite[Lemma 4.1]{LagR:23a}  and \cite[Proposition 4.2]{LagR:23a}.

%%%%%%%%%%%%%%%%%%%%%%
% Proposition  2.1
%%%%%%%%%%%%%%%%%%%%
\begin{prop}
[Almost complementation for initial $1$-floor quotient intervals]\label{prop:interval-complement}
The $n$-floor reciprocal map $\J_n$  acts as an involution when  restricted to the domain
$\ADset_1[1,n]$ of all floor quotients of $n$, i.e.
\begin{equation}\label{eq:37a}
\J_n^{\circ 2} (d) = d \quad \mbox{for all} \quad  d \in \ADset_1[1,n].
\end{equation} 
Given  an integer $n \geq 1$,
let 
\begin{equation*}
\ADsetsmall(n) = \{d : d \AD n,\, d \leq \sqrt{n} \}
\qquad\text{and}\qquad
\ADsetlarge(n) = \{d : d \AD n,\, \floorfrac{n}{d} \leq \sqrt{n} \}.
\end{equation*}
\begin{enumerate}
\item 
We have
$ 
\ADset[1,n] = \ADsetsmall (n) \bigcup \ADsetlarge(n).
$
The  map $\J_n: k \mapsto \floor{n/k}$  restricts to a bijection sending $\ADsetlarge(n)$ to
$\ADsetsmall(n)$ 
and to a bijection sending $\ADsetsmall(n)$ to
$\ADsetlarge(n)$. 

\item 
Moreover, letting $s= \floor{\sqrt{n}}$,  one has
\begin{equation}
\ADsetsmall(n) = \{ 1, 2, \ldots, s\},
\qquad\text{and}\qquad
\ADsetlarge(n) 
= \{ n,  \floor{\frac{n}{2} }, \floor{\frac{n}{3} }, \ldots, 
\floor{ \frac{n}{s} } \}.
\end{equation}
The intersection of  $\ADsetsmall(n)$ and $\ADsetlarge(n)$ is
\begin{equation*}
\ADsetsmall(n) \bigcap \ADsetlarge(n) = \begin{cases}
\{s\} &\text{if }n < s(s+1) \\
\emptyset &\text{if }n \geq s(s+1).
\end{cases}
\end{equation*}
\end{enumerate}
\end{prop}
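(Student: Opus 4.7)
The plan is to establish the involution property \eqref{eq:37a} first, then use it together with an explicit description of $\ADsetsmall(n)$ to deduce the remaining claims. The key observation driving everything is that for a fixed $d$, the set of integers $k$ satisfying $\sfloor{n/k} = d$ is precisely the set of integers in the half-open interval $(n/(d+1),\, n/d]$, whose maximum element (when the set is nonempty) is $\sfloor{n/d}$. Hence whenever $d \AD n$, the value $k^{\ast} := \J_n(d) = \sfloor{n/d}$ is itself a valid cutting length, so $\sfloor{n/k^{\ast}} = d$ and $\J_n^{\circ 2}(d) = d$, proving \eqref{eq:37a}.

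Next I would show $\ADsetsmall(n) = \{1, 2, \ldots, s\}$, where $s = \sfloor{\sqrt{n}}$. The inclusion $\subseteq$ is immediate from the definition. For $\supseteq$, I would fix $d \le s$, set $k = \sfloor{n/d}$, and verify $\sfloor{n/k} = d$ by checking the equivalent inequality $(d+1)\sfloor{n/d} > n$. Writing $n = qd + r$ with $0 \le r < d$ reduces this to $q > r$, which holds because $d \le \sqrt{n}$ forces $q = \sfloor{n/d} \ge d > r$.

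Once $\ADsetsmall(n)$ is pinned down, the explicit description of $\ADsetlarge(n)$ and the bijection claim in part (1) fall out of the involution: applying $\J_n$ to $\{1, 2, \ldots, s\}$ produces the stated form of $\ADsetlarge(n)$, and the decomposition $\ADset[1,n] = \ADsetsmall(n) \cup \ADsetlarge(n)$ holds because any $d \AD n$ with $d > s$ automatically satisfies $\sfloor{n/d} < \sqrt{n}$ and so lies in $\ADsetlarge(n)$. The bijective correspondence between the two sets is then automatic from \eqref{eq:37a}.

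For the intersection formula, if $d \in \ADsetsmall(n) \cap \ADsetlarge(n)$ then $d \le s$ and $\J_n(d) \le s$; reapplying $\J_n$ and invoking \eqref{eq:37a} forces $d \ge \sfloor{n/s}$, and a short case split on whether $n \ge s(s+1)$ then yields the stated dichotomy. I expect this last step to be the most delicate part, since one must extract the threshold $s(s+1)$ cleanly by comparing $\sfloor{n/s}$ against $s$ and handling the perfect-square boundary correctly; once this is done, the rest of the argument assembles from routine bookkeeping built on the interval characterization $\sfloor{n/k} = d \Leftrightarrow k \in (n/(d+1),\, n/d]$.
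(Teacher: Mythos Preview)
Your proposal is correct and self-contained. Note that the paper does not actually prove this proposition: it states the result and attributes it to Cardinal \cite[Proposition 4]{Cardinal:10} and to the authors' earlier paper \cite[Lemma 4.1, Proposition 4.2]{LagR:23a}, without reproducing any argument. Your approach---identifying the solution set $\{k : \sfloor{n/k}=d\}$ with the integers in $(n/(d+1),\,n/d]$ to get the involution, then explicitly computing $\ADsetsmall(n)=\{1,\dots,s\}$ via the remainder inequality $q>r$, and finally transporting this through $\J_n$---is exactly the standard direct proof and matches what one finds in those references. The only part you flagged as delicate, the intersection formula, goes through cleanly: from $d\le s$ and $\sfloor{n/d}\le s$ one gets $n<d(s+1)\le s(s+1)$, giving emptiness when $n\ge s(s+1)$; and when $n<s(s+1)$ the bound $n/(s+1)\ge s^2/(s+1)>s-1$ forces $d=s$ as the unique common element.
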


%%%%%%%%%%%%%%%%%%%%%%%%%%%%%%%%%%%%
%The almost complementation property of the $1$-floor quotient order 
%in Theorem \ref{prop:interval-complement} is a property of the
% initial interval $\ADset[1, n]$ regarded as a set.  
%%%%%%%%%%%%%%%%%%%%%%%%%%%%%%%%%%%

%%%%%%%%
% Remark 2.2
%%%%%%%%
\begin{rmk}
(1)   Part (2) of Proposition~\ref{prop:interval-complement}
 determines the cardinality of  $|\ADset_1[1,n]|$  as follows.
 Set $s = \floor{\sqrt{n}}$.
\begin{enumerate}
\item[(i)] If $s^2 \leq n < s(s+1)$, then  $ \floor{ \frac{n}{s} }=s$
and  
 $|\ADset_1[1,n]|=2s-1$.
The element $s \in \ADset_1[1,n]$
is the unique fixed point of the map  $\J_n$ on $\ADset_1[1,n]$.

\item[(ii)] If $ s(s+1) \leq n < (s+1)^2 $, then $ \floor{ \frac{n}{s} } = s+1$ and   $|\ADset_1[1,n]| = 2s$.
The  map $\J_n$ has no fixed points on $\ADset_1[1,n]$.
%%%%%%%%%%%%%%%%%%%%%%%%%%%%%%%%%%%%%%%
%and  restricts to a bijection between $\ADsetsmall(n)$ and $\ADsetlarge(n)$.
%%%%%%%%%%%%%%%%%%%%%%%%%%%%%%%%%%%%%%%%
\end{enumerate}

(2) The almost complementation property of $1$-floor quotients in Proposition~\ref{prop:interval-complement} fails to  hold for $a$-floor quotients with $a \ge 2$,
see Section \ref{sec:a-duality}.

\end{rmk}

%%%%%%%%%%%%%%%%%%%%%%%%%%%%%%%%%%%%%%%%%%%
%The almost-complementation property of the 1-floor quotient order on initial intervals was
%observed by  Cardinal \cite[Proposition 4]{Cardinal:10}.  See the floor quotient paper.
%%%%%%%%%%%%%%%%%%%%%%%%%%%%%%%%%%%%%%%%%%%

%%%%%%%%%%%%%%%%%%%%%%%%%%%%%%%%%%%%%%%%%%%%%%%%%%%%%%%
%
% Section 3
%
%%%%%%%%%%%%%%%%%%%%%%%%%%%%%%%%%%%%%%%%%%%%%%%%%%%%%%%%
\section{The family of \asafe-floor quotient orders}
\label{sec:a-floor-orders}
\setcounter{equation}{0}

In this section we  prove six equivalent characterizations of  the $a$-floor quotient relation, given  in Theorem \ref{thm:a-equiv-properties}.
We  deduce Theorem \ref{thm:a-approx-order}, showing that for each $a$, the  $a$-floor quotient relation is  a partial order that is an  approximate divisor order. 
We then show that the  $a$-floor  order   refines the $b$-floor order whenever $a \le b$, 
and that as $b \to \infty$ the order $(\NNplus, \fquo{b})$ approaches the divisor order on $\NNplus$ (Theorem \ref{thm:hierarchy-11}).
It follows that the  $1$-floor quotient  relation  is maximally refined 
among all these partial orders.  
(That is, it has the largest number of incidences in its order relation).

%%%%%%%%%%%%%%%%%%%%%%%%%%%%%%%%%%%%%%%%%%%%%%%%%%%%%%%
%
% Section 3.1 
%
%%%%%%%%%%%%%%%%%%%%%%%%%%%%%%%%%%%%%%%%%%%%%%%%%%%%%%%%
\subsection{Characterizations of \asafe-floor quotients}
\label{subsec:31a}

  The $a$-floor quotient has a  set of equivalent characterizations, which parallel
  and generalize those for the $1$-floor quotient given in \cite[Theorem 3.1]{LagR:23a}.
   
%%%%%%%%%%%%%%%%
%
% Theorem 3.1
%(a-floor quotients equivalence)
%
%%%%%%%%%%%%%%%
\begin{thm}
[Characterizations of  $a$-floor quotients] \label{thm:a-equiv-properties}  
For  a fixed integer $a \ge 1$, the following conditions on positive integers $\dd,n$ are equivalent
to $d \ADa n$:
\begin{enumerate}
\item {\em ($a$-Cutting property)} 
There exists a positive integer $k$ such that 
$ \displaystyle a\dd = \floor{ \frac{an}{k} }$.

\item {\em ($a$-Covering property)}
There exists a positive integer $k$ such that
\[
k [ \dd , \dd + 1/a) \supseteq [n, n + 1/a), 
\]
where both sides denote half-open intervals in $\RR$.

\item  {\em ($a$-Intersection property)}
There exists a positive integer $k$ such that
$$k [ \dd , \dd + 1/a) \cap [n, n + 1/a) \neq \emptyset.$$

\item  {\em ($a$-Strong remainder property)}
There is a positive integer $k$ such that
\[
n = \dd k + {r} \quad\text{where the remainder $r$ satisfies }\, \, 0 \leq  r < \min\left( \dd,\frac{k}{a} \right).
\]

\item {\em ($a$-Tipping-point property)}
There holds the strict inequality 
$
\displaystyle
\floorfrac{n}{\dd} > \floorfrac{n}{\dd + {1}/{a} }
$.

\item {\em ($a$-Reciprocal-duality property)} 
There holds the identity
\begin{equation}
a\dd = \floor{ \frac{an}{\floor{n/\dd}} }.
\end{equation}
%%%%%%%%%%%%%%%%%%%%%
%
 %   $\dd = \frac1{a}\floor{ a \cdot \frac{n}{\floor{n/\dd}} } $,
%
%%%%%%%%%%%%%%%%%%%%%%%
Equivalently
\begin{equation}
\dd \leq \frac{n}{\floor{n/\dd}} < \dd + \frac{1}{a}.
\end{equation}
\end{enumerate}
\end{thm}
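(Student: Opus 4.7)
The plan is to reduce each of the six stated conditions to the corresponding characterization of the $1$-floor quotient relation $\AD$ established in \cite[Theorem 3.1]{LagR:23a}, applied to the pair $(ad, an)$. Since by Definition~\ref{def:a-floor} we have $d \ADa n \iff ad \AD an$, the equivalence of the six properties here will then follow directly from the six-way equivalence known for the $1$-floor quotient order. So I would not reprove the $1$-floor equivalences, but only show that each listed $a$-property is a faithful translation of the corresponding $1$-property under the dilation $(d,n) \mapsto (ad, an)$.

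For each item I would verify the translation as follows. (1) Property (1) is literally the $1$-cutting property $ad = \floor{(an)/k}$. (2) Since $a \cdot [d, d+1/a) = [ad, ad+1)$ and $a \cdot [n, n+1/a) = [an, an+1)$, multiplying both sides of the inclusion $k[d, d+1/a) \supseteq [n, n+1/a)$ by the positive scalar $a$ gives $k[ad, ad+1) \supseteq [an, an+1)$, which is the $1$-covering property for $(ad,an)$. (3) The same scaling applied to nonempty intersections gives the $1$-intersection property. (4) From $n = dk + r$ with $0 \le r < \min(d, k/a)$, multiplying by $a$ yields $an = (ad)k + ar$ with $0 \le ar < \min(ad, k)$, which is the $1$-strong remainder condition with remainder $ar$; the reverse direction just notes that the $1$-remainder $r'$ for $(ad, an)$ is automatically divisible by $a$ when $k$ is chosen correctly, but this is unneeded since one can simply set $r = r'/a$ after verifying $a \mid r'$ (see below).

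(5) Because $n/d = an/(ad)$ and $n/(d + 1/a) = an/(ad+1)$, both floors appearing in the $a$-tipping-point inequality coincide with the corresponding $1$-tipping-point floors for $(ad, an)$. (6) Similarly, $\floor{n/d} = \floor{an/(ad)}$, so the identity $ad = \floor{an/\floor{n/d}}$ is precisely the $1$-reciprocal-duality property for $(ad, an)$; the ``Equivalently'' form follows by multiplying the inequality $ad \le an/\floor{n/d} < ad+1$ through by $1/a$.

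The only place requiring a bit of care is (4): when translating the $1$-strong remainder back to the $a$-strong remainder, one must check that the $1$-remainder $r'$ associated to $(ad, an)$ can be taken to be a multiple of $a$. This follows because $an \equiv r' \pmod{k}$ where $k$ is the cutting length, and upon choosing $k$ so that $ad = \floor{an/k}$, the remainder is $r' = an - adk = a(n - dk)$, automatically divisible by $a$. Hence $r = r'/a$ satisfies the bound $0 \le r < \min(d, k/a)$. All other reductions are direct algebraic rescalings, so the theorem follows at once from \cite[Theorem 3.1]{LagR:23a}.
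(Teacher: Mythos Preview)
Your reduction is correct and complete. The key observation that each of the six $a$-conditions for $(d,n)$ is equivalent, under the dilation $(d,n)\mapsto(ad,an)$, to the corresponding $1$-condition for $(ad,an)$ is sound in every case; in particular your handling of (4) is fine, since for \emph{any} $k$ witnessing the $1$-strong remainder property one has $r'=an-(ad)k=a(n-dk)$ automatically, so divisibility by $a$ is immediate and no special choice of $k$ is needed.

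This is, however, a genuinely different route from the paper's. The paper does not invoke \cite[Theorem~3.1]{LagR:23a} as a black box; instead it proves the chain of equivalences $(1)\Leftrightarrow(2)\Leftrightarrow(3)$, $(1)\Leftrightarrow(5)$, $(1)\Leftrightarrow(6)$, $(4)\Leftrightarrow(6)$ directly for general $a$, introducing the $a$-cutting length set $\sK_a(d,n)=\{k:\floor{n/(d+1/a)}<k\le\floor{n/d}\}$ along the way. Some of the paper's individual steps do pass through the dilated pair $(ad,an)$, but the overall argument is self-contained rather than a reduction. Your approach is shorter and more conceptual, cleanly isolating the content as ``the $a$-case is the $1$-case seen through a dilation''; the paper's approach has the virtue of being self-contained and of making the description \eqref{eqn:Kad-bound} of $\sK_a(d,n)$ explicit, which is reused later.
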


%%%%%%%%%%%%%%%%%%%%%%%%
%
% Proof of (new) $a$-floor characterization  Theorem 2.5 [main21]
% 
%%%%%%%%%%%%%%%%%%%%%%%%
\begin{proof}[Proof of Theorem \ref{thm:a-equiv-properties}]

$(1) \Leftrightarrow (2).$ 
Suppose  $ad= \floor{ \frac{an}{k} }$ for some $k$, which is property (1).
Then $an= k(ad) +r $ with $0 \le r \le k-1$,
so
 $[an, an+1) \subset [kad, kad+ k) =k[ad, ad+1)$.
Dividing by $a$, we obtain
$$\textstyle [n, n+\frac{1}{a})  \subset k[d, d+ \frac{1}{a} ).$$   
This is the $a$-covering property (2) for $(d,n)$.
These steps are all reversible, 
so the properties are equivalent.

%%%%%%%%%%%%%%%%%%%%%%%%%%%%%%%%%%%%%%%%%%%%%%%%%
% $(2) \Rightarrow (1).$  
%If  $(d,n)$ has the covering property (2), 
%then $[n, n+1) \subseteq k[d, d+1) = [kd, kd +k)$
%for some positive integer $k$. 
%Now $n \ge kd$ so $\frac{n}{k} \ge d$. 
%Also  $n+1\le kd+k$ so $\frac{n}{k } + \frac{1}{k} \le d+1$ whence $\frac{n}{k} \le d+1-\frac{1}{k}$.
%Thus $\floor{ \frac{n}{k} } = d$ so  $(d,n)$ has the cutting property (1).
%%%%%%%%%%%%%%%%%%%%%%%%%%%%%%%%%%%%%%%%%%%%%%%%%

$(2) \Leftrightarrow (3).$ 
The direction $(2) \Rightarrow (3)$ is immediate. For 
$(3) \Rightarrow (2)$ if $k[d , d+\frac{1}{a})$ contains some $x \in [n,n+\frac{1}{a})$ then,
multiplying by $a$, we have $k[ad, ad+1)$ contains some $y=ax \in [an, an+1)$. 
But then contains the full half-open interval $[an, an+1)$, 
because the interval $k[ad, ad+1)$ has integer endpoints.

$(1) \Leftrightarrow (5)$.
For positive integers $\dd,n$, we define the {\em $a$-cutting length set} to be
 the set of integers
\begin{equation}
\sK_{a}(\dd,n) := \{ k \in \NNplus : a\dd = \floor{\frac{an}{k} } \}.
\end{equation}
Property (1) says that this set is nonempty.
We have
\[
\sK_a(\dd,n) 
= \{ k\in \NNplus : a\dd \leq \frac{an}{k} < a\dd + 1\}
= \{ k \in \NNplus : \dd \leq \frac{n}{k}  < \dd + \frac{1}{a}\} .
\]
The lower bound $\dd \leq \frac{n}{k} $
is equivalent to 
$k \leq \floor{ \frac{n}{\dd} }$.
The upper bound $\frac{n}{k} < \dd + \frac{1}{a}$
is equivalent to
%%%%%%%%%%%%%%%%%%%%%%%
%\frac{n}{\dd +\frac{1}{a}} < k \Leftrightarrow 
%%%%%%%%%%%%%%%%%%%%%%%
$\floorfrac{n}{\dd +{1}/{a}}  < k$.
Therefore
\begin{equation} 
\label{eqn:Kad-bound} 
\sK_a(\dd,n) = \{ k \in \NNplus : \floor{ \frac{n}{\dd +1/a}} < k \leq \floor{ \frac{n}{\dd} } \},
\end{equation} 
so $\sK_{a}{n}$ is nonempty if and only if $\floor{ \frac{n}{\dd+\frac{1}{a}} } < \floor{ \frac{n}{\dd} }$,
which is the tipping point property (5).

$(1) \Leftrightarrow (6)$.
The direction $(6) \Rightarrow (1)$ is clear.
Now suppose (1) holds. The proof  of  $(1) \Leftrightarrow (5)$
established  \eqref{eqn:Kad-bound}, showing that 
\[
ad = \floorfrac{an}{k} \quad\text{is equivalent to }\quad 
\floorfrac{n}{d + 1/a} < k \leq \floor{ \frac{n}{d} } .
\]
Property (1) says that  
$\sK_a(\dd,n) = \{ k \in \NNplus : a\dd = \floorfrac{an}{k} \}$
is nonempty. 
Now any  $k \in \sK_{a}(\dd, n)$ must have $\frac{an}{k} \ge ad$, 
or equivalently $\frac{an}{ad} \ge k$. 
Thus if  $\sK_{a}(\dd, n)$  is nonempty then it must contain $k = \floorfrac{an}{ad}$,
which is the involution-duality property (6).

$(4) \Leftrightarrow (6)$.
Suppose property (4) holds, so there is a positive integer $k$ such that
$
n = \dd k + r$ where 
$0 \leq r < \min(\dd, k/a).$
Then
$
\floorfrac{n}{d}
= \floor{k + \frac{r}{d}}
= k 
%= \frac{n}{d} - \frac{r}{d}
$,
hence
\[
\floorfrac{an}{ \floor{n/d}}
= \floorfrac{adk + ar}{k}
= \floor{ ad + \frac{ar}{k} }
= ad
\]
which is the identity in property (6).

It remains to show that if property (4) does not hold for $k$,
then property (6) does not hold. 
For a given $(d,n)$, dividing $n$ by $d$ with remainder gives  
$n = dk + r$
for integers $k,r$ with  $0 \leq r < d$
and  $k = \floor{n/d}$.
% and  $r = n - d\floor{n/d}$.
Property (4) does not hold exactly when the remainder $r$
satisfies $r \geq k/a$.
In that case  
\[
\frac{an}{ \floor{n/d}}
= \frac{adk + ar}{k}
= ad + \frac{ar}{k} \geq ad + 1,
\]
which implies
$\floorfrac{an}{\floor{n/d}} > ad,$
so that property (6) does not hold. 
\end{proof} 

We can now prove that each $a$-floor quotient relation is a
partial order that is an approximate divisor order.

%%%%%%%%%%%%%%%%%%
% Proof of Theorem 1.3
%%%%%%%%%%%%%%%%%%
\begin{proof}[Proof of Theorem~\ref{thm:a-approx-order}]
We show all $a$-floor quotient orders are  partial orders.
This  was already shown for $a=1$ in \cite{LagR:23a}. 
Assuming the property for $a=1$,  the partial order property is inherited
for general $a \ge 2$  by 
the definition that $d \ADa n $  if and only if $ad \AD an$. 
The restriction of the partial order
$\AD$ to the subset $a\NNplus$ is automatically a partial order.

Alternatively, the partial order properties of reflexivity, symmetry and transitivity  
can be deduced directly from  characterizations given in  Theorem \ref{thm:a-equiv-properties} above.

If $d \mid n$, then $ad \mid an$, so $ad \AD an$, which by definition implies $a \ADa n$.
Thus $\ADa$ refines the divisor order. 

If $d \ADa n $,
then the condition in Theorem~\ref{thm:a-equiv-properties} (1) implies that $d \leq n$ in the additive order.
Hence $\ADa$ is refined by the additive total order.
We conclude that $\ADset_a = (\NNplus, \ADa)$ is an approximate divisor order, for any $a \geq 1$.
\end{proof} 

%%%%%%%%%%%%%%%%%%%%%%%%%%%%%%%%%%
%
% section 3.2  Hierarchy theorem 1.4
%
%%%%%%%%%%%%%%%%%%%%%%%%%%%%%%%%%%%%%%%%%%%%%%%%%%%%%%
\subsection{Hierarchy of \asafe-floor quotient orders: Proof of Theorem \ref{thm:hierarchy-00}}\label{subsec:33}  

We derive Theorem \ref{thm:hierarchy-00} from the following result,
which reformulates its  condition (2)
in a more convenient form.

%%%%%%%%%%%%%%%%%%%%%%%
%
% Theorem 3. 2
%1.4  THEOREM ENVIRONMENT
%%%%%%%%%%%%%%%%%%%%%%%

\begin{thm}[$a$-floor quotient order hierarchy]
\label{thm:hierarchy-11}  
 The $a$-floor quotient partial orders $\ADset_a$ have the following properties.
 
(1) If $a < b$ (in the additive order) then the $a$-floor quotient partial order $\ADset_a$ strictly refines the
$b$-floor quotient partial order  $\ADset_b$.

(2) The intersection of the $a$-floor quotient partial orders $ \ADset_a$ over all  $a \ge 1$
is  the divisor partial order  $\sD = (\NNplus, \divides)$.
Namely,
$$
\bigcap_{a \geq 1} \{ (d,n) \in \NNplus \times \NNplus : d \ADa n\} = \{ (d,n) \in \NNplus \times \NNplus : d \divides n\}.
$$
\end{thm}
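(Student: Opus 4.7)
The plan rests on two of the equivalent characterizations in Theorem~\ref{thm:a-equiv-properties}: the $a$-strong remainder property~(4) and the $a$-reciprocal-duality property~(6). Both encode the $a$-dependence through the single scale $1/a$, which makes monotonicity in $a$ transparent and reduces part~(2) to a limiting argument.

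For the refinement direction in part~(1), suppose $a < b$ and $d \fquo{b} n$. By the strong remainder property there is a positive integer $k$ such that the remainder $r := n - dk$ satisfies $0 \le r < \min(d, k/b)$. Since $a < b$ gives $k/b \le k/a$, we have $\min(d, k/b) \le \min(d, k/a)$, so the same $k$ witnesses $d \fquo{a} n$. Thus every incidence of $\ADset_b$ is also an incidence of $\ADset_a$.

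For the strictness in part~(1), I will exhibit, for each pair $a < b$, an explicit pair $(d, n)$ with $d \fquo{a} n$ but with $d \fquo{b} n$ failing. Using the reciprocal-duality property in its inequality form, this reduces to arranging
\[
d + \tfrac{1}{b} \le \frac{n}{\floor{n/d}} < d + \tfrac{1}{a}.
\]
Setting $d = a + 1$, $k = ab$, and $r = a$, the identity $n = dk + r = (a+1)(ab) + a$ gives $\floor{n/d} = k$ (since $r < d$), and consequently
\[
\frac{n}{\floor{n/d}} = d + \frac{r}{k} = d + \frac{1}{b}.
\]
This value lies in $[d + 1/b,\, d + 1/a)$ because $a < b$, so $d \fquo{a} n$ holds, while the endpoint equality $n/\floor{n/d} = d + 1/b$ shows that the strict inequality $n/\floor{n/d} < d + 1/b$ required for $d \fquo{b} n$ fails.

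For part~(2), the inclusion $\sD \subseteq \bigcap_{a \ge 1} \ADset_a$ is immediate from Theorem~\ref{thm:a-approx-order}. Conversely, if $d \fquo{a} n$ for every $a \ge 1$, then reciprocal duality gives $d \le n/\floor{n/d} < d + 1/a$ for every $a$; letting $a \to \infty$ forces $n/\floor{n/d} = d$, so $n = d\floor{n/d}$ and hence $d \mid n$. The only delicate step is the explicit construction in the strictness claim; once reciprocal duality is on the table, the remaining arguments are monotonicity in $1/a$ and a limit.
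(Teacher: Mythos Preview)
Your proof is correct and follows essentially the same approach as the paper: both rely on the reciprocal-duality characterization~(6) for the refinement step and the limit in part~(2), with the only differences being that you invoke the strong-remainder form~(4) for refinement and construct a more elaborate strictness witness $(d,n)=(a+1,(a+1)ab+a)$, whereas the paper uses the simpler pair $(2,2b+1)$.
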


\begin{proof}

(1) 
Theorem \ref{thm:a-equiv-properties} (6) states that
\begin{equation}\label{eq:62}
d \fquo{a} n \qquad\text{if and only if}\qquad 
d \leq \frac{n}{\floor{n/d}} < d + \frac1a .
\end{equation}
If $d \fquo{b} n$ holds, 
it follows that $d \fquo{a} n$ 
for all integers $a \leq b$. 
Thus $\fquo{a}$ refines $\fquo{b}$.

It remains to show that for $a<b$ the refinement is strict, 
i.e. there exists a pair $(d,n)$ having
$d \fquo{a} n$ but $d \not\!\fquo{b} n$. 
A suitable choice is  
$ (d,n) = (2, 2b  +1) $, so $\floor{n/d} = b$. 
The condition \eqref{eq:62} becomes
\[
2 \ADa 2b + 1 \qquad\text{if and only if}\qquad
2 \leq \frac{2b+1}{b} = 2 + \frac{1}{b} < 2 + \frac{1}{a} . 
\]
Thus $2 \ADa 2b + 1$ assuming $a < b$, but $2 \not\!\fquo{b} 2b+1$.

(2) We show that
\begin{equation}\label{eqn:hierarchy}
\bigcap_{a \geq 1} \{ (d,n) \in \NNplus \times \NNplus : d \ADa n\} = \{ (d,n) \in \NNplus \times \NNplus : d \divides n\}.
\end{equation}
Since each $a$-floor quotient order $\ADa$ refines the divisor order,
we have the inclusion
\[
\bigcap_{a \geq 1} \{ (d,n) \in \NNplus \times \NNplus : d \ADa n\} \supseteq \{ (d,n) \in \NNplus \times \NNplus : d \divides n\}.
\] 
 On the other hand, if $d \ADa n$ for all $a\geq 1$,
 then condition \eqref{eq:62}
 implies that 
\[
 \frac{n}{\floor{n/d}} = d \qquad\Leftrightarrow\qquad
 n = d \floor{n/d},
\]
 so $d$ divides $n$.
This shows the reverse inclusion. 
\end{proof}

 %%%%%%%%%%%%%%%%%%%%%%%%%%%%%%%%%%%%%%%%%%%%%%%%%%%%%%%
%
% section 4
%
%%%%%%%%%%%%%%%%%%%%%%%%%%%%%%%%%%%%%%%%%%%%%%%%%%%%%%%
\section{Hierarchy of \asafe-floor quotient orders: Stabilization bounds}\label{sec:4}
 
 We study the effect of varying $a$ on intervals of the $a$-floor quotient order. 
 
%%%%%%%%%%%%%%%%%%%%%%%%%%%%%%%%%%%%%%%%%%%%%%%%%%%%%%%
%
% section 4.1
%4.3
%
%%%%%%%%%%%%%%%%%%%%%%%%%%%%%%%%%%%%%%%%%%%%%%%%%%%%%%%
\subsection{\asafe-floor scaling sets}
\label{sec:52} 

%%%%%%%%%%%%
%
% Definition  4.1
%4.3
%%%%%%%%%%%%
\begin{defi}\label{def:51}
For a pair $(d,n)$ of positive integers, the {\em  $a$-floor scaling set} $\sS(d,n)$ is
the set of $a$ for which $d$ is an $a$-floor quotient of $n$, 
i.e.
\[
\sS(d, n) := \{ a\in \NNplus  : \,\, d \fquo{a} n  \} .
\]
\end{defi}

The $a$-floor scaling set $\sS(d,n)$ may be infinite, finite, or empty.
%%%%%%%%%%%%
%
% Example 4.2
%4.4
%%%%%%%%%%%%
\begin{exa}\label{exa:51} 
We have
\[\sS (5, 15)=\NNplus, \quad \sS( 5, 16) = \{ 1, 2 \},\quad \sS(5, 17) = \{ 1\},\quad  \sS(5, 18) = \emptyset.\]
\end{exa} 
%%%%%%%%%%%%%%%
%
% Definition 4.3 
%4.5
%%%%%%%%%%%%%%%
\begin{defi}
Given positive integers $d$ and $n$,
the {\em quotient discrepancy} $\delta(d,n)$ is
\begin{equation}
\label{eq:discrepancy}
	\delta(d,n) = n - d \floor{\frac{n}{d}} .
\end{equation}
\end{defi}
Note that $\delta(d,n)$ always satisfies $0 \leq \delta(d,n) < d$. 
We have $\delta(d,n) = 0$ if and only if $d$ divides $n$,
and $\delta(d,n) < \floor{n/d}$ if and only if $d$ is a floor quotient of $n$.
Moreover, if $d$ is a floor quotient of $n$, then
$\delta(d,n) = \delta(\floor{n/d}, n)$.

%%%%%%%%%%%%%%%
%
% Theorem 4.4
%4.6
%%%%%%%%%%%%%%%
\begin{thm}[Structure of  $a$-floor scaling sets]
\label{thm:scaling-trichotomy}
For positive integers $d$ and $n$, the set $\sS(d,n)$ of all $a$ where $d \ADa n$ is either $\NNplus$, 
or an initial  interval $\sA[1, m]$ of $\NNplus$ of the additive order $\sA = (\NNplus, \leq)$, or the empty set. 
The following trichotomy holds.

\begin{enumerate}[(i)]
\item
If $d \mid n$, then $\sS (d, n) = \NNplus$.
\item
If $d \AD n$ and $d \nmid n$, then 
 $\sS(d, n) = \{ 1, 2, 3, \cdots, \tilde{a}(d, n)\}$,
where $\tilde{a}(d,n)$  is the largest integer such that
\[
	\tilde{a}(d, n) < \frac{ \floor{n/d} }{ \delta(d,n) }
\]
and $\delta(d,n)$ is defined by \eqref{eq:discrepancy}.

\item
If $d \notAD n$, then $\sS(d, n) = \emptyset$.
\end{enumerate} 
\end{thm}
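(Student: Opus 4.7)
The plan is to reduce the trichotomy to a single inequality coming from the reciprocal-duality characterization (property~(6)) of Theorem~\ref{thm:a-equiv-properties}, which says that $d \fquo{a} n$ is equivalent to
\[
	d \leq \frac{n}{\floor{n/d}} < d + \frac{1}{a}.
\]
Since every $a$-floor quotient order is refined by the additive order, I may restrict attention to pairs with $d \leq n$; the degenerate case $d > n$ immediately gives $d \notAD n$ and $\sS(d,n) = \emptyset$, which falls under case~(iii).

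The first step is to observe that the lower bound $d \leq n/\floor{n/d}$ is automatic when $d \leq n$, since $\floor{n/d} \leq n/d$ forces $n/\floor{n/d} \geq d$. Writing $k_0 := \floor{n/d}$ and unpacking $\delta(d,n) = n - d k_0$, a direct computation gives
\[
	\frac{n}{\floor{n/d}} - d \;=\; \frac{n - d k_0}{k_0} \;=\; \frac{\delta(d,n)}{\floor{n/d}}.
\]
Substituting into the upper bound in property~(6), the relation $d \fquo{a} n$ is equivalent to the single inequality
\[
	a \cdot \delta(d,n) \;<\; \floor{n/d}.
\]

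The trichotomy now reads off directly from the size of $\delta(d,n)$ relative to $\floor{n/d}$. In case~(i), $d \mid n$ means $\delta(d,n) = 0$, so the inequality holds for every $a \in \NNplus$ and $\sS(d,n) = \NNplus$. In case~(ii), the hypothesis $d \AD n$ together with property~(6) applied to $a = 1$ gives $\delta(d,n) < \floor{n/d}$, while $d \nmid n$ gives $\delta(d,n) \geq 1$; hence the set $\{a \in \NNplus : a\,\delta(d,n) < \floor{n/d}\}$ is the initial interval $\{1, 2, \ldots, \tilde{a}(d,n)\}$, where $\tilde{a}(d,n)$ is the largest integer strictly less than $\floor{n/d}/\delta(d,n)$. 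In case~(iii), $d \notAD n$ means that property~(6) fails at $a = 1$, i.e., $n/\floor{n/d} \geq d + 1$, equivalently $\delta(d,n) \geq \floor{n/d}$, so the inequality $a\,\delta(d,n) < \floor{n/d}$ fails for every $a \geq 1$ and $\sS(d,n) = \emptyset$.

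I do not anticipate any substantive obstacle. The main step is the algebraic recasting of property~(6) as the linear inequality $a\,\delta(d,n) < \floor{n/d}$; once that is in hand, the three cases of the trichotomy correspond exactly to the three possible regimes $\delta(d,n) = 0$, $0 < \delta(d,n) < \floor{n/d}$, and $\delta(d,n) \geq \floor{n/d}$, and each of these was already characterized in \cite{LagR:23a} in terms of divisibility, $1$-floor quotient, and non-$1$-floor-quotient, respectively.
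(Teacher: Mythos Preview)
Your proof is correct and follows essentially the same route as the paper: both arguments hinge on the reciprocal-duality characterization (property~(6) of Theorem~\ref{thm:a-equiv-properties}) and rewrite the condition $d \fquo{a} n$ as the linear inequality $a\,\delta(d,n) < \floor{n/d}$, from which the trichotomy is read off according to whether $\delta(d,n)$ is zero, strictly between $0$ and $\floor{n/d}$, or at least $\floor{n/d}$. The only cosmetic difference is that the paper first invokes the hierarchy theorem (Theorem~\ref{thm:hierarchy-11}) to assert that $\sS(d,n)$ is an initial interval and to handle case~(iii), whereas you extract both of these facts directly from the inequality; since the hierarchy theorem itself is proved via the same property~(6), the two presentations are equivalent.
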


\begin{proof}
Theorem~\ref{thm:hierarchy-00} (1) implies that $\sS(d,n)$ must be an initial interval in the additive order, 
i.e. if $b \in \sS(d,n)$ and $a < b$ then $a \in \sS(d,n)$.

(i) Suppose $d \mid n$. 
Then $d \ADa n$ for all $a \ge 1$, by Theorem~\ref{thm:a-approx-order}.
Therefore $\sS (d, n) = \NNplus$.

(ii) 
Suppose $d \AD n$ but $d \nmid n$. 
Using criterion \eqref{eq:62},
we have $a \in \sS(d,n)$ if and only if 
\begin{equation*}
\frac{n}{\floor{n/d}} < d + \frac1a 
\quad\Leftrightarrow\quad
a < \frac{ \floor{ {n}/{d} }}{n - \floor{ {n}/{d} } d} 
=: \frac{ \floor{n/d} }{\delta(d,n)} .
\end{equation*}
The quantity $\delta(d,n) = n - d\floor{n/d}$ is positive by the assumption that $d$ does not divide $n$.
In this case we define $\tilde{a}(d,n)$ to the largest integer that is strictly less than
$\frac{ \floor{n/d} }{ \delta(d,n)}$.

(iii)  
If $d\notAD n$,
then Theorem \ref{thm:hierarchy-11}
implies that $d \notADa n$ for all $a \geq 1$.
Therefore in this case $\sS(d, n) = \emptyset$.
\end{proof}

%%%%%%%%%%%%%%%%%%%%%%%%%%%%%%%%%%
%
% section 4.2
%//4.4// Hierarchy quantitative: stabilization
%
%%%%%%%%%%%%%%%%%%%%%%%%%%%%%%%%%%%%%%%%%%%%%%%%%%%%%%
\subsection{Hierarchy  bounds for  stabilization to divisor order}\label{subsec:54}  

 %%%%%%%%%%%%%%%
 %
% Theorem 4.5
%4.7
%%%%%%%%%%%%%%%
\begin{thm}[$a$-floor quotient  order stabilization]
\label{thm:interval-hierarchy-1} 
%Suppose $d \mid n$. 
The  $a$-floor quotient partial order  on the interval $\ADset_a[d,n]$, 
%$(\ADaset [d,n], \ADa)$ 
is identical  to the divisor partial order $\mid$ on the  interval $\sD[d,n]$,  
whenever  $a \ge \floorfrac{n}{d}$. 
That is,
\[
	(\ADaset [d,n], \ADa)  = ( \sD[d,n], \mid)  \qquad \text{for all}  \quad a \ge \floor{ \frac{n}{d}}.
\]
If in addition $d \mid n$,
then  the $a$-floor quotient partial order  on  $(\ADaset [d,n], \ADa)$ is identical  to 
the divisor partial order on  $(\sD[d,n], \mid)$,  for $a \geq \floorfrac{n}{d+1}$. 
\end{thm}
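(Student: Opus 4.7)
The plan is to derive both parts of the theorem from the Scaling Trichotomy (Theorem~\ref{thm:scaling-trichotomy}), combined with the fact that the divisor order $\sD$ is refined by every $\ADa$, which gives the automatic inclusion $\sD[d,n] \subseteq \ADaset[d,n]$. The remaining work is to rule out ``extra'' elements and ``extra'' incidences in $\ADaset[d,n]$ once $a$ crosses the stated threshold. The quantitative tool is the trichotomy: for any pair $e \AD f$ with $e \nmid f$, the relation $e \ADa f$ forces $a < \floor{f/e}/\delta(e,f)$, and any pair with $e \ADa f$ automatically satisfies $e \AD f$ by the hierarchy (Theorem~\ref{thm:hierarchy-11}), so the trichotomy indeed applies.

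For the first part, assuming $a \ge \floor{n/d}$, I would argue by contradiction. If $e \in \ADaset[d,n]\setminus \sD[d,n]$, then either $d \nmid e$ or $e \nmid n$; applying the trichotomy to the offending pair (either $(d,e)$ or $(e,n)$), the coarse estimates $\delta \ge 1$ and $\floor{f'/e'} \le \floor{n/d}$ (which follow from $d \le e' \le f' \le n$) combine to give $a < \floor{n/d}$, contradicting the hypothesis. The identical argument applied to a pair $e \ADa f$ inside $\sD[d,n]$ with $e \nmid f$ rules out any ``extra'' comparability, so the two orders agree on the set.

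For the refined second part, assuming $d \mid n$ and $a \ge \floor{n/(d+1)}$, the same strategy works but with a case split according to whether the offending $e$ divides $n$. If $e \in \ADaset[d,n]\setminus \sD[d,n]$ with $e \nmid n$, then the hypothesis $d \mid n$ forces $e \ne d$, hence $e \ge d+1$ and $\floor{n/e} \le \floor{n/(d+1)}$; the trichotomy applied to $(e,n)$ then gives $a < \floor{n/(d+1)}$, a contradiction. If instead $e \mid n$ but $d \nmid e$, then $e$ is a proper divisor of $n$ (else $d \mid n = e$ would force $d \mid e$), so $e \le n/2$; then $\floor{e/d} \le \floor{n/(2d)} \le \floor{n/(d+1)}$ (using $2d \ge d+1$), and the trichotomy applied to $(d,e)$ again gives $a < \floor{n/(d+1)}$. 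Order coincidence on $\sD[d,n]$ reduces similarly: for $e \ADa f$ with $e \nmid f$ and $e, f \in \sD[d,n]$, one has $e > d$ (else $e = d$ divides $f$), so $e \ge d+1$ and $\floor{f/e} \le \floor{n/(d+1)}$, giving the required contradiction.

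The main obstacle is identifying the right case split in the refined argument: the naive bound used in the first part only yields $\floor{n/d}$, and the improvement to $\floor{n/(d+1)}$ requires conditioning on whether the offending $e$ divides $n$, so that one can invoke either $e \ge d+1$ (to bound $\floor{n/e}$) or $e \le n/2$ (to bound $\floor{e/d}$ via $2d \ge d+1$). Both subcases crucially use $d \mid n$ to exclude the endpoint equalities $e = d$ and $e = n$.
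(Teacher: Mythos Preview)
Your proposal is correct and follows essentially the same approach as the paper: both invoke the scaling trichotomy (Theorem~\ref{thm:scaling-trichotomy}) to bound $a$ by $\lfloor f/e\rfloor$ for an offending pair $(e,f)$ with $e \nmid f$, and in the refined part both use $d \mid n$ to exclude the endpoint $(d,n)$ and then split into two subcases yielding either $e \ge d+1$ or an upper element $\le n/2$. Your organization differs only cosmetically---you separate set equality from order equality and, in the refined bound, split on whether the offending element divides $n$, whereas the paper treats everything via a single pair $(e,m)$ with $e \nmid m$ and splits on whether $m=n$ or $m<n$ (using $m \ADa n$, $m<n \Rightarrow m \le n/2$)---but the key inequalities and the use of $\delta \ge 1$ are identical.
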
  

\begin{proof}
The result follows using  Theorem \ref{thm:scaling-trichotomy} (ii). 
%Suppose $d \mid n$. 
Since all  the order relations in $\sD[d,n]$ are  included
in  $\ADset_a[d,n]$ for each $a \geq 1$,  if $a$ is such that
 $\ADset_a[d, n]$ is not order-isomorphic to $\sD[d, n]$, then there
must exist $e, m \in \ADset_a[d,n]$ having an extra order relation
\[
	d \ADa e \ADa m \ADa n \quad \mbox{with} \quad e \nmid m.
\]
%\harry{Problem: separate question of bijection as sets, from isomorphism of order relation}
The assumption $e \nmid m$ implies that $\delta(e, m) = m - e \floor{m / e} \geq 1$.
Since  $a \in \sS(e, m)$, Theorem~\ref{thm:scaling-trichotomy} (ii) implies that necessarily
\begin{equation}\label{eqn:a-bound}
	a < \frac{\floor{m / e}}{\delta(e, m)} \leq \floorfrac{m}{e}.
\end{equation}
Since $m \leq n$ and $e \geq d$, this implies $a < \floorfrac{n}{d}$ as claimed in the first part of the theorem.

%{\bf Case 2.} $d \mid n$. 

For the second part of the theorem, assuming $d \mid n$, we treat two cases.
Any pair $(e,m)$ with $e \nmid m$ cannot be $(d, n)$, so either $e \ge d+1$ and $m=n$, or else $e \ge d$ and $m \le \frac{n}{2}.$
%The condition $e \ADa m$ is equivalent to  $ae \AD am$.  
%But $m \le \frac{n}{2}$.  has largest value $\floorfrac{am}{2} \le \frac{am}{2}$. 

{\bf Case 1.}
In the first case $(e,m) := (e,n)$ has $e \nmid n$.  Let 
$k = k_{min}(d,n) $ denote  the least  integer $k \ge d$ such that $k \nmid n$,
so that $e \ge k_{min}(d,n)$. 
Now \eqref{eqn:a-bound} gives, for $k= k_{min}(d,n)$, 
\[
a < \frac{\floorslash{n}{e}}{\delta(e, n)} \le \floorfrac{n}{e} \le \floorfrac{n}{k}.
\]
Thus  for all pairs  $(e,n)$ occurring  this subcase
 stabilization to the divisor order occurs for $a \ge \floorfrac{n}{k_{min}(d,n)}$ 
Since $k_{min}(d,n) \ge d + 1$, we obtain stabilization for 
these $(e,m)$ for all  $a \ge \floorfrac{n}{d+1}$.

{\bf Case 2.}
In the second case $(e, m)$ satisfies $e \nmid m$ with $m \le \frac{n}{2}.$ 
Now \eqref{eqn:a-bound} gives
\[
a < \frac{\floorslash{m}{e}}{\delta(e, m)} \le \floorfrac{m}{e} \le \floorfrac{n}{2e} \le \floorfrac{n}{2d}.
\]
%We conclude that 
Thus stabilization to the divisor order occurs for all  $(e,m)$ occurring in this subcase
 in the range $a \ge  \floorfrac{n}{2d}$.

We conclude that for $d \mid n$ 
we have stabilization for $a \ge \max( \floorfrac{n}{k_{min}(d, n)}, \floorfrac{n}{2d}).$ 
Since $k_{min}(n, d) \ge d+1$ hence $\floorfrac{n}{k_{min}} \leq \floorfrac{n}{d + 1}$, and  $\floorfrac{n}{2d}\le \floorfrac{n}{d+1}$, 
we obtain stabilization of $(\ADaset [d,n], \ADa)  = ( \sD[d,n], \mid)$
to the divisor order $( \sD[d,n], \mid)$   for $a \ge \floorfrac{n}{d+1}$.
\end{proof}

A weaker condition than equality-as-posets asks how large $a$ must be to guarantee an equality of the underlying
sets of $\ADset_a[d,n] $ and $\sD[d,n]$.

 %%%%%%%%%%%%
% Example  4.6
%4.8
%%%%%%%%%%%%
\begin{exa}\label{exa:68}
The equality as sets  of $\ADaset[1,n]$ and $\sD[1,n]$ can  occur 
% below the bound  $a \ge \floorfrac{n}{d+1}$ of Theorem \ref{thm:interval-hierarchy-1}, 
without isomorphism of the order relations. 

For example, for $a = 5$
the interval $\ADset_5[1,26] = \{1,2,13,26\}$
is equal to $\sD[1,26]$ as a set.
However, the order relation $\fquo{5}$ restricts to a total order $1 \fquo{5} 2 \fquo{5} 13 \fquo{5} 26$,
whereas the divisor order on $\{1,2,13,26\}$ is not a total order. 
On this example Theorem \ref{thm:interval-hierarchy-1}  has $k_{min}=3$, $2d=2$, and asserts agreement with the divisor order
for $a \ge \floorfrac{26}{2} = 13.$
% \jeff{Formerly $\floorfrac{26}{3} = 8$.}
The stabilization with the divisor order in fact occurs at $a = 7$.
%\jeff{Please check the assertion in the last line' we give no proof.}
\end{exa}

 The following example gives intervals  $\ADset_{a}[1, 10]$  in the $a$-floor quotient order 
 for various $a$.
  Recall that
  $
  \ADset_{a}[d, n] := \frac{1}{a} \left( \ADset_1[ad, an] \bigcap a\NNplus\right).
  $

%%%%%%%%%%%%
% Example 4.7
%4.9 
%%%%%%%%%%%%
\begin{exa}\label{exa:56} 
The $a$-floor quotient interval $\ADaset[1,10]$  is shown in the table below, for ${a = 1, 2, 3, \ldots}$ where $\ADset_\infty := \sD$.
\begin{center}
\begin{tabular}{ccc}
 $a$ & elements in $\ADaset[1, 10]$ & size \\ \hline \\[-0.8em]
$1$ & $\{ 1, 2, 3, 5, 10\}$ & 5 \\
$2$ & $\{ 1, 2, 3, 5, 10\}$ & 5 \\
$3$ & $\{ 1, 2, 5, 10\}$ & 4 \\
% $\ADset_1[9,90]$ & $\{ 9, 18, 45, 90\}$ & 4 \\
$a \geq 4$ & $\{ 1, 2, 5, 10\}$ & 4
\end{tabular}
\end{center}
Theorem \ref{thm:interval-hierarchy-1} asserts  that stabilization to the divisor order
 occurs by $a= \floorfrac{n}{2}=5$. Stabilization  occurs at $a \ge 3$.
 %\jeff{Please check the assertion in the last line; we give no proof.}
 %$a=\floorfrac{n}{k}   = \floorfrac{10}{3} = 3$, taking $d' =3 $.
% (\jeff{Formerly: $\floorfrac{10}{3} = 3$, $k_{min}=3$.}
\end{exa} 

%%%%%%%%%%
% Example 4.8
%4.10
%%%%%%%%%%
\begin{exa}%\label{exa:56} 
 The $a$-floor quotient interval $\ADaset[1,11]$ 
 is shown in the table below, for $a = 1,2,3$ and $\infty$ where $\ADset_\infty := \sD$.

\begin{center}
\begin{tabular}{ccc}
 $a$ & elements in $\ADaset[1, 11]$ & size \\ \hline \\[-0.8em]
$1$ & $\{ 1, 2, 3, 5, 11\}$ & 5 \\
$2$ & $\{ 1, 2,  11\}$ & 3 \\
$3$ & $\{ 1, 2, 11\}$ & 3 \\
% $\ADset_5[1,11]$ & $\{ 1, 11\}$ & 2 \\
$a \geq 5$ & $\{ 1, 11\}$ & 2
\end{tabular}
\end{center}
Theorem \ref{thm:interval-hierarchy-1} asserts  that stabilization to the divisor order
 occurs by $a=\floorfrac{n}{k}   = \floorfrac{11}{2} = 5$. 
 %\jeff{Data shows this bound  is sharp.} 
 \end{exa} 
 
 The rescaled interval $a \ADaset[1,n]$ is always contained in the set $\ADset_1[ ad, an]$;
 compare   the Example \ref{exa:56} for $\ADset_a[1,10]$ with 
 the full interval $\ADset_1[ ad, an]$
 (from Example 5.2 of \cite{LagR:23a}), given below.
 %~\ref{exa:47}.

%%%%%%%%%%%%%
% Example  4.9
%%%%%%%%%%%%%
\begin{exa}\label{exa:47}
The floor quotient intervals $\ADset [a,10a]$ for various $a$ are shown in the table below.
%%%%%%%%%%%%%%%z%%%%%%%%%%%%%%%%%%%%%%%%%%%%
% EXTRA NUMERICAL DATA OMITTED (N=10)
%%%%%%%%%%%%%%%%%%%%%%%%%%%%%%%%%%%%%%%%%%%%
%contains $5$ numbers $\{ 1, 2, 3, 5, 10\}$. \\
%The floor quotient interval $\ADset_1[2,20]$ contains $6$ numbers $\{2, 4, 5, 6, 10, 20 \}$. \\
%The floor quotient interval $\ADset_1[3,30]$ contains $6$ numbers $\{3, 6, 7, 10, 15, 30\}$. \\
%The floor quotient interval $\ADset_1[4,40]$ contains $5$ numbers $\{4, 8, 13, 20, 40\}$. \\
%... \\
% The interval $[5,50]$ contains $5$ numbers $\{5, 10, 16, 25, 50\}$. \\
% The interval $[6,60]$ contains $5$ numbers $\{6,12,20,30,60\}$. \\
% The interval $[7,70]$ contains $5$ numbers $\{7,14,23,35,70\}$. \\
%The floor quotient interval $\ADset_1[8,80]$ contains $5$ numbers $\{8,16,26,40,80\}$. \\
%The floor quotient interval $\ADset_1[9,90]$ contains $4$ numbers $\{9,18,45,90\}$. \\
%The floor quotient interval $\ADset_1[10,100]$ contains $4$ numbers $\{10,20,50,100\}$
%%%%%%%%%%%%%%%%%%%%%%%%%%%%%%%%%%%%%%%%%%%%%%.
\begin{center}
\begin{tabular}{ccc}
 $a$ & elements in $\ADset_1[a, 10a]$ & size \\ \hline \\[-0.8em]
$1$ & $\{ 1, 2, 3, 5, 10\}$ & 5 \\
$2$ & $\{ 2, 4, 5, 6, 10, 20\}$ & 6 \\
$3$ & $\{ 3, 6, 7, 10, 15, 30\}$ & 6 \\
$4$ & $\{ 4, 8, 13, 20, 40\}$ & 5 \\
$9$ & $\{ 9, 18, 45, 90\}$ & 4 \\
\end{tabular}

\end{center}
%%%%%%%%%%%%%%%z%%%%%%%%%%%%%%%%%%%%%%%%%%%%
% EXTRA NUMERICAL DATA OMITTED (N=14)
%%%%%%%%%%%%%%%%%%%%%%%%%%%%%%%%%%%%%%%%%%%
%\begin{center}
%\begin{tabular}{ccc}
% interval & elements & size \\ \hline \\[-0.8em]
%$\ADset_1[1,14]$ & $\{ 1, 2, 3, 4, 7, 14\}$ & 5 \\
%$\ADset_1[2,28]$ & $\{ 2, 4, 5, 7, 9, 14, 28\}$ & 7 \\
%$\ADset_1[3,42]$ & $\{ 3, 6, 7, 10, 14, 21, 42\}$ & 7 \\
%$\ADset_1[4,56]$ & $\{ 4, 8, 9, 14, 18, 28, 56\}$ & $7$ \\
%$\ADset_1[5,70]$ & $\{ 5, 10, 17, 23, 35, 70\} $ & $6$ \\
%$\ADset_1[6,84]$ & $\{ 6, 12, 21, 28, 42, 84\} $ & $6$ \\
%$\ADset_1[9,126]$ & $\{ 9, 18, 63, 126\}$ & 4 \\
%\end{tabular}
%\end{center}
%%%%%%%%%%%%%%%z%%%%%%%%%%%%%%%%%%%%%%%%%%%%
\end{exa}

%%%%%%%%%%%%%
% Section 4.3 IMPROVED
%%%%%%%%%%%%%%%
\subsection{Improved bounds on hierarchy stabilization}\label{subsec:43}

The {\em set-stabilization threshold} of the integer $n$
%interval $[1, n]$ 
is the smallest
value of $a$ such that the interval $\ADaset[1, n]$ is equal, as a set, to the interval $\sD[1, n]$.
The {\em poset-stabilization threshold} of $n$ is the smallest value of $a$ such that the interval $\ADaset[1, n]$, as a poset, is equal to the interval $\sD[1, n]$.

In general, the set-stabilization threshold  is smaller than the poset-stabilization threshold.
See Figure~\ref{fig:stabilization} for a comparison of the set- and poset-stabilization threshold  for $n = 1, 2, \ldots, 100$.
%%%%%%%%%%%%%%
% Begin Figure  1
% Figure 4.1
%%%%%%%%%%%%%
\begin{figure}[h!]
\centering
	\includegraphics[scale=0.51]{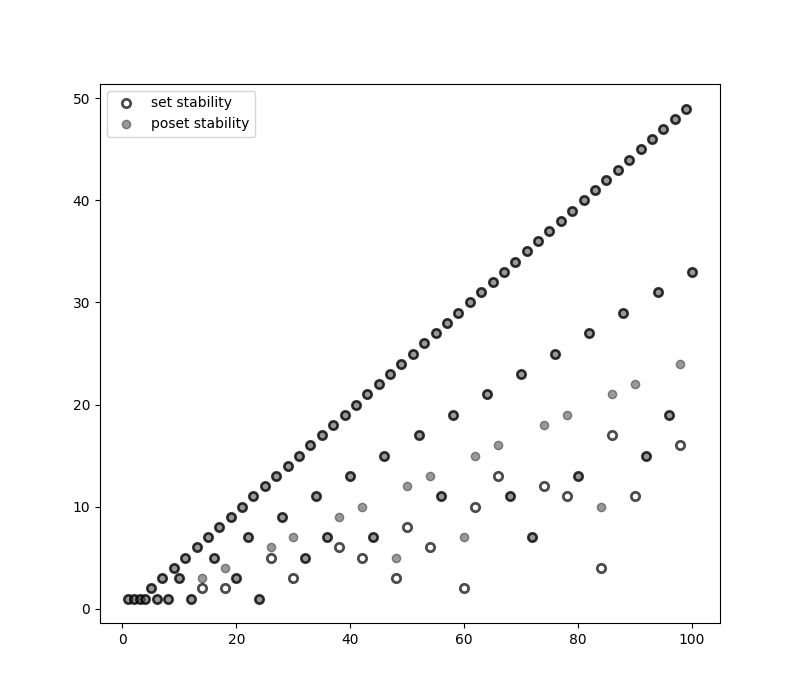}
\caption{Plot of set- and poset-stabilization thresholds for $\ADset_a[1,n]$ for $1 \leq n \leq 100$.
The horizontal axis is the $n$-variable and the vertical axis is the $a$-variable.}
\label{fig:stabilization}
\end{figure}
%%%%%%%%%%%%%%
% End Figure 1
%Figure 4.1
%%%%%%%%%%%%%

%%%%%%%%%%%%%%%%%%%%%%%%%%%%%%%%%%%
%
%%Ssection 5. BROKEN RECIPROCAL QUANTITIVE
%
%%%%%%%%%%%%%%%%%%%%%%%%%%%%%%%%%%
\section{Action of floor reciprocal map:
Broken reciprocal duality}\label{sec:a-duality}  
\setcounter{equation}{0}

The reciprocal-duality property for the $1$-floor quotient partial order 
on $\ADset_1[1,n]$ is weakened to the property
(6) given in Theorem \ref{thm:a-equiv-properties}. 
In particular,
the sets $\ADasetlarge (n)$ and $\ADasetsmall(n)$ are not in $1$-to-$1$ correspondence.
The reciprocal duality symmetry (at the level of sets)
on the  initial interval $\ADaset[1,n]$ for $n \ge 1$ is
broken for $a \ge 2$.  

We show the $n$-floor reciprocal map  $\J_n : \ADasetlarge(n) \to \ADasetsmall(n)$ is
injective, but in the reverse direction the map $\J_n$ only sends $\ADasetsmall(n)$ into $\ADsetlarge(n)$.
The image of $\ADasetsmall$ is always
contained in $\ADsetlarge$, but for $a \ge 2$,
is usually not in $\ADasetsmall(n)$.

%%%%%%%%%%%%%%%%%%%%%%%%%%%%%%%%%%%%%%%%%%%%%%%%%%%%%%
%
% section 5.1  Asymmetry of J_n action, reciprocal sets fails a>1.
%
%%%%%%%%%%%%%%%%%%%%%%%%%%%%%%%%%%%%%%%%%%%%%%%%%%%%%%%
\subsection{Asymmetry of initial \asafe-floor quotient intervals}\label{subsec:51b} 

We have $\ADaset[1, n] \subseteq \ADset_1[1,n]$,
which is proved in 
Theorem~\ref{thm:hierarchy-11}.
It guarantees that each  $d \in \ADaset[1,n]$ can be  written as $d = \floorfrac{n}{k}$
for a unique $k \in \ADset_1[1,n]$;  
however it need not hold that $k \in \ADaset[1,n]$.

Since the $a$-floor quotient interval  $\ADset_{a}[1,n]$ is contained in $ \ADset_{1}[1,n]$, 
we have a decomposition into ``small'' and ``large'' quotients
\[ 
\ADset_{a}[1,n]= 
\ADasetsmall(n) \bigcup \ADasetlarge(n),
\]
with 
\[
\ADasetsmall(n) :=  \ADaset[1,n] \bigcap \ADset_1^{-}(n)
= \{d \in \ADaset[1,n] : d \leq \sqrt{n} \} 
\]
and
\[
\ADasetlarge(n) :=  \ADaset[1,n] \bigcap\ADset_1^{+}(n)
= \{d \in \ADaset[1,n] : \floorfrac{n}{d} \leq \sqrt{n} \}  .
\]

%%%%%%%%%%%%%%%%%%%%%%
% Lemma 5.1
%%%%%%%%%%%%%%%%%%%%
\begin{lem}\label{lem:62b}
The function $\J_n$ maps the set $\ADasetlarge(n)$ injectively into the set $\ADasetsmall(n)$.
Consequently, for all $n \ge 1$, 
\begin{equation}\label{eqn:minus-beats-plus}
 \verts{ \ADasetlarge(n) } \le \verts{ \ADasetsmall(n) }.
 \end{equation}
\end{lem}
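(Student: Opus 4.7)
The plan is to verify that $\J_n$ restricted to $\ADasetlarge(n)$ is injective and has image contained in $\ADasetsmall(n)$, from which the cardinality inequality \eqref{eqn:minus-beats-plus} is immediate. Injectivity comes essentially for free: Proposition~\ref{prop:interval-complement} states that $\J_n$ is an involution on $\ADset_1[1,n]$, and since $\ADasetlarge(n) \subseteq \ADaset[1,n] \subseteq \ADset_1[1,n]$ (the second containment from Theorem~\ref{thm:hierarchy-11}), an involution restricts to an injection on any subset of its domain.

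The substantive content is the containment $\J_n(\ADasetlarge(n)) \subseteq \ADasetsmall(n)$. Fixing $d \in \ADasetlarge(n)$ and setting $k := \J_n(d) = \floorfrac{n}{d}$, the defining inequality $\floorfrac{n}{d} \leq \sqrt{n}$ of $\ADasetlarge(n)$ already gives $k \leq \sqrt{n}$, so the task reduces to proving $k \ADa n$. I would invoke the $a$-strong remainder property, Theorem~\ref{thm:a-equiv-properties}(4): applied to $d \ADa n$ it yields an integer $k'$ with $n = dk' + r$ and $0 \leq r < \min(d, k'/a)$, and the bound $r < d$ forces $k' = \floorfrac{n}{d} = k$. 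So $n = kd + r$ with $0 \leq r < \min(d, k/a)$.

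To conclude $k \ADa n$ from the same criterion applied to the pair $(k,n)$ with candidate remainder integer $d$, I need to upgrade the bound on $r$ to $0 \leq r < \min(k, d/a)$. The bound $r < k$ is free since $r < k/a \leq k$ for $a \geq 1$. The bound $r < d/a$ reduces to showing $d \geq k$, which is the crux. Here I would use that $d \ADa n$ implies $d \AD n$, so the $1$-reciprocal-duality property (Theorem~\ref{thm:a-equiv-properties}(6) with $a=1$) gives $d = \floorfrac{n}{k}$; since $k \leq \sqrt{n}$ yields $n/k \geq k$ and $k$ is an integer, taking floors preserves the inequality and $d \geq k$. Then $d/a \geq k/a > r$, property (4) holds for $(k,n)$, and $k \ADa n$, so $k \in \ADasetsmall(n)$. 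The main obstacle is this final transfer of the remainder bound across the reciprocal swap, but the largeness of $d$ guaranteed by $d \in \ADasetlarge(n)$ makes the inequality $d \geq k$ painless, and everything else is bookkeeping.
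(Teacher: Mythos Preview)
Your proof is correct and follows essentially the same approach as the paper: both use the $a$-strong remainder property (Theorem~\ref{thm:a-equiv-properties}(4)) for $d \ADa n$ to write $n = dk + r$ with $0 \leq r < \min(d, k/a)$, then swap the roles of $d$ and $k$ and use $k \leq d$ (from $d \in \ADasetlarge(n)$) to transfer the remainder bound to $0 \leq r < \min(k, d/a)$, concluding $k \ADa n$; injectivity is inherited from $\J_n$ being an involution on $\ADset_1[1,n]$. Your write-up is slightly more explicit in justifying $k' = \floorfrac{n}{d}$ and $d \geq k$, but the argument is the same.
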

%%%%%%%%%%%%%%%%%%%%%%
% Proof of Lemma 5.1
%%%%%%%%%%%%%%%%%%%%%%
\begin{proof}
Suppose $d \in \ADaset[1,n]$. Then Property (4) of Theorem \ref{thm:a-equiv-properties} 
says this is equivalent to
\[
	n = \dd k + r \qquad 
	\text{where} \quad k = \J_n(d) = \floorfrac{n}{d}
	\quad\text{and}\quad 0 \leq r < \min(d, k/a).
\]
Note that $k = \J_n(d)$ and $d \AD n$ implies $d = \J_n(k)$, by Proposition~\ref{prop:interval-complement}.

If $d \in \ADasetlarge(n)$,
then $k \leq d$ 
so the bound $r < k/a$ also implies $r < d/a$.
Moreover, since $a \geq 1$ the bound $r < k/a $ implies $r < k$.
Thus
\[
n = kd + r \qquad
\text{where} \quad d = \J_n(k) = \floorfrac{n}{k}
\quad\text{and}\quad 0 \leq r < \min(k, d/a).
\]
By Property (4) of Theorem~\ref{thm:a-equiv-properties},
this implies $k = \J_n(d) \in \ADset_a[1,n]$.
Since $k \leq \sqrt{n}$, we have $k \in \ADset_a^-(n)$ as desired.

Thus $\J_n$ maps $\ADasetlarge(n)$ into $\ADasetsmall(n)$, and
injectivity is inherited from the behavior of $\J_n$ on $\ADset_1[1,n]$.
\end{proof}

For $a \ge 2$ the set $\ADasetlarge(n)$ is generally significantly smaller than the set $\ADasetsmall(n)$,
see Figure \ref{fig:51} for $a=2$. Since  these sets are contained in $\ADset_1^{+}(n)$
and $\ADset^{-}(n)$, respectively, they are each of size at most $\sqrt{n}$.
%%%%%%%%%%%%%%%%%%%%%%%%%%%%%%%%%%%%%%%%%%%%%%%%%%%%
%The plot shows for $a =2$  that $\ADasetlarge(n)$ is generally much smaller than $\ADasetsmall(n)$
%%%%%%%%%%%%%%%%%%%%%%%%%%%%%%%%%%%%%%%%%%%%%%%%%%%%

%%%%%%%%%%%%%%
% Begin Figure  2
% Fig 5.1
%%%%%%%%%%%%%
\begin{figure}[h]
    \centering
    \includegraphics[scale=0.65]{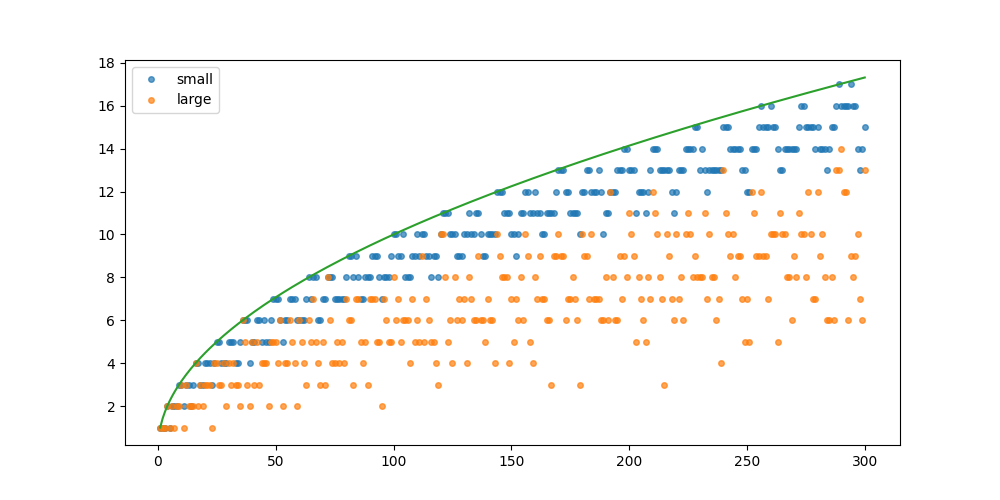}
    \caption{Plot of  the number of small  $2$-floor quotients  $\verts{ \ADset_2^{-}(n) }$ (blue)  versus number of large $2$-floor quotients $\verts{ \ADset_2^{+}(n) }$ (orange) for $n$ up to $300$.
    The upper envelope line is $y = \sqrt{n}$.}
    \label{fig:51}
\end{figure}
%%%%%%%%%%%%%%
% End Figure  2
% Fig 5.1)
%%%%%%%%%%%%%

%%%%%%%%%%
%
% Lemma 5.2
%
%%%%%%%%%%
\begin{lem}\label{lem:small-a-floor-quotient-interval}
Let $a \ge 1$.  
For every $n \ge 1$, all  $m \in [1, \sqrt{\frac{n}{a}}]$ belong to  $\ADset_a[1,n]$. 
In particular,
\begin{equation}\label{eqn:minus-lower}
	\verts{ \ADasetsmall(n)} \geq \floor{\sqrt{\frac{n}{a}}}.
\end{equation}
Additionally $\verts{ \ADset_a [1,n]} >  \sqrt{\frac{n}{a}}$ if $n > 1$.
\end{lem}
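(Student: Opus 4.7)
The plan is to apply the $a$-strong remainder property, Theorem \ref{thm:a-equiv-properties}(4), which says $d \ADa n$ iff there exists $k \in \NNplus$ with $n = dk + r$ and $0 \le r < \min(d, k/a)$.

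Fix $m \in \NNplus$ with $m \le \sqrt{n/a}$, so $am^2 \le n$, and choose $k := \floorfrac{n}{m}$. By Euclidean division we have $n = mk + r$ for some $r$ with $0 \le r < m$. The key observation is that $n/m \ge am$ (since $n \ge am^2$ and $am$ is an integer), hence $k = \floorfrac{n}{m} \ge am$. This gives $k/a \ge m$, so $\min(m, k/a) = m$, and therefore $0 \le r < m = \min(m, k/a)$. Property (4) then yields $m \ADa n$. In particular $\{1, 2, \ldots, \floor{\sqrt{n/a}}\} \subseteq \ADaset[1,n]$, and since all these elements lie in $\ADset_1^-(n)$ (being $\le \sqrt{n}$), they all lie in $\ADasetsmall(n)$, which gives inequality \eqref{eqn:minus-lower}.

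For the strict inequality $\verts{\ADaset[1,n]} > \sqrt{n/a}$ when $n > 1$, note that $n \in \ADaset[1,n]$ always (reflexivity). For $n > 1$ and $a \ge 1$ we have $an > 1$, hence $n^2 > n/a$, so $n > \sqrt{n/a} \ge \floor{\sqrt{n/a}}$. Therefore $n$ is not among the elements $\{1, \ldots, \floor{\sqrt{n/a}}\}$, and so
\[
\verts{\ADaset[1,n]} \ge \floor{\sqrt{n/a}} + 1 > \sqrt{n/a}.
\]

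There is no real obstacle here; the only substantive step is recognizing that the bound $m \le \sqrt{n/a}$ forces $\floorfrac{n}{m} \ge am$, which is exactly the condition needed to make the Euclidean remainder automatically satisfy the stronger inequality $r < k/a$ required by the $a$-strong remainder criterion. The strict inequality is then a bookkeeping observation separating $n$ from the range $[1, \sqrt{n/a}]$.
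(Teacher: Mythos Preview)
Your proof is correct and follows essentially the same approach as the paper: both verify the $a$-strong remainder criterion (Theorem~\ref{thm:a-equiv-properties}(4)) by showing that $k=\floor{n/m}\ge am$ whenever $m\le\sqrt{n/a}$, and both obtain the strict inequality by adjoining the element $n$ itself. Your derivation of $k\ge am$ (directly from $n\ge am^2$ and the integrality of $am$) is in fact a bit cleaner than the paper's chain of floor inequalities.
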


\begin{proof}
Let $m$ be in the range $1 \le m \le \floor{\sqrt{\frac{n}{a}}}$. 
We wish to show that $m \ADa n$,
so by definition of the $a$-floor quotient relation, it suffices to show that $am \AD an$.

The condition $m \leq \sqrt{n / a}$ implies that $am \leq \sqrt{a n}$.
It follows that $am \AD an$ as desired, by applying Proposition~\ref{prop:interval-complement}

We have $n = m k+r$ where $k = \floorfrac{n}{m} \in \ADset_1^{+}(n)$, and $0 \le r < m$. 
We have $m \le \frac{k}{a}$ since 
\[
k = \floor{\frac{n}{m}}
\geq \floor{ \frac{n}{\floor{\sqrt{{n} / {a}}}}}
\geq \floor{ \frac{n}{\sqrt{{n} / {a}}}} = \floor{\sqrt{an}} \ge a \floor{\sqrt{\frac{n}{a}}} \ge am.
\]
Therefore $r < m = \min ( m, \frac{k}{a})$ so $m \in \ADasetsmall(n)$, as asserted.
%[1,n]$.

For the final assertion, if $n \ge 2$ then $\ADset_a[1,n]$ includes the  element $n \in \ADasetlarge(n)\smallsetminus \ADsetsmall(n)$.
\end{proof}

%%%%%%%%%%%%%%%%%%%%%%
%  Lemma 5.3
%%%%%%%%%%%%%%%%%%%%

\begin{lem}\label{lem:53}
For each $a \geq 2$, there is some $n$ such that 
$\ADasetsmall(n)$ is strictly larger in size than $\ADasetlarge(n)$.
\end{lem}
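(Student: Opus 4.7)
The plan is to exhibit, for each $a \geq 2$, an explicit $n$ together with a specific element of $\ADasetsmall(n)$ that fails to lie in the image of the injection $\J_n\colon\ADasetlarge(n) \hookrightarrow \ADasetsmall(n)$ provided by Lemma~\ref{lem:62b}. Once such an element is produced, the injection cannot be a bijection, so strict inequality $\verts{\ADasetsmall(n)} > \verts{\ADasetlarge(n)}$ follows immediately. The key structural input is that $\J_n$ is an involution on the larger set $\ADset_1[1,n]$, so a small element $d \in \ADasetsmall(n)$ is hit by $\J_n$ from $\ADasetlarge(n)$ if and only if its unique involution partner $\J_n(d)$ actually belongs to $\ADaset[1,n]$.

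My candidate is $n = 2a+3$ with $d = 2$, whose involution partner is $k = \J_n(2) = \floorfrac{2a+3}{2} = a+1$. I would first check that $d = 2 \in \ADasetsmall(n)$: the bound $2 \leq \sqrt{2a+3}$ holds for $a \geq 2$, and Theorem~\ref{thm:scaling-trichotomy}(ii) applied with $\floorfrac{n}{2} = a+1$ and $\delta(2,n) = 1$ shows that $2 \fquo{a'} (2a+3)$ whenever $a' < a+1$, in particular for $a' = a$. Next I would verify that $a+1 \notin \ADaset[1,n]$: computing $\floorfrac{n}{a+1} = 2$ and $\delta(a+1, n) = 1$, Theorem~\ref{thm:scaling-trichotomy}(ii) again gives that $(a+1) \fquo{a'} n$ holds iff $a' < 2$, which fails precisely for $a \geq 2$.

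To finish, since $\J_n$ acts as an involution on $\ADset_1[1,n] \supseteq \ADasetsmall(n)$ by Proposition~\ref{prop:interval-complement}, the only possible preimage of $d = 2$ under $\J_n$ is $k = a+1$. Because $a+1 \notin \ADaset[1,n]$ it is certainly not in $\ADasetlarge(n)$, so $d = 2$ lies in $\ADasetsmall(n)$ but is \emph{not} in the image of the injection from Lemma~\ref{lem:62b}. This produces the strict inequality and completes the proof.

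There is no significant obstacle; the work consists of three routine arithmetic verifications on the family $n = 2a+3$ using the quotient-discrepancy criterion of Theorem~\ref{thm:scaling-trichotomy}(ii). The only point requiring a moment of care is the logical structure: one must use that $\J_n$ is an involution on all of $\ADset_1[1,n]$ (not merely on $\ADaset[1,n]$) in order to conclude that the unique possible preimage of $2$ is $a+1$, and then invoke the failure of $a+1 \fquo{a} n$ to rule out this preimage.
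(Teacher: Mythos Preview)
Your proof is correct and follows essentially the same approach as the paper: exhibit an element of $\ADasetsmall(n)$ whose unique $\J_n$-partner fails to be an $a$-floor quotient of $n$, so that the injection of Lemma~\ref{lem:62b} cannot be surjective. The paper uses $n = a^2 + a + 1$ with the element $d = a$ (and partner $a+1$), while you use $n = 2a + 3$ with the element $d = 2$ (and partner $a+1$); both choices work for the same reason, and your write-up spells out the underlying logic more explicitly than the paper's one-line verification.
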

\begin{proof}
Suppose $n = a^2 + a + 1$.
Then $a \in \ADasetsmall(n)$, while $a + 1 = \J_n(a) \not\in \ADasetlarge(n)$.
\end{proof}

%%%%%%%%%%%%%%%%%%%%%%%%%%%%%%%%%%%%%%%%%%%%%%%%%%%%%%
%
% section 5.2 Total size of initial $a$-floor quotient intervals    
%
%%%%%%%%%%%%%%%%%%%%%%%%%%%%%%%%%%%%%%%%%%%%%%%%%%%%%%%
\subsection{Total size of  initial \asafe-floor quotient intervals: heuristic}\label{subsec:52b} 

We  study the total size of the initial $a$-floor quotient interval $\verts{ \ADset_a[1,n] }$,
as $n$ varies. 
In the case $a = 1$ these values are non-decreasing in $n$, and their total 
number is  $2 \sqrt{n} + O(1)$.

We present data for $2$-floor quotients in Figure~\ref{fig:total-count-300}. 
The interval sizes $\verts{ \ADset_2[1,n] }$ are scattered, exhibiting
non-monotonic behavior.
The occurrence of some  scatter of values is not surprising, in that  as $a \to \infty$  the interval $\ADset_{a}[1,n]$ converges to the set  $\sD[1,n]$ consisting of divisors of $n$, % (pointwise), 
and the number of divisors of $n$ varies non-monotonically in $n$.

%%%%%%%%%%%%%%
% Begin Figure  3
% Figure 5.2
%%%%%%%%%%%%%
\begin{figure}[h]
\centering
\includegraphics[scale=0.65]{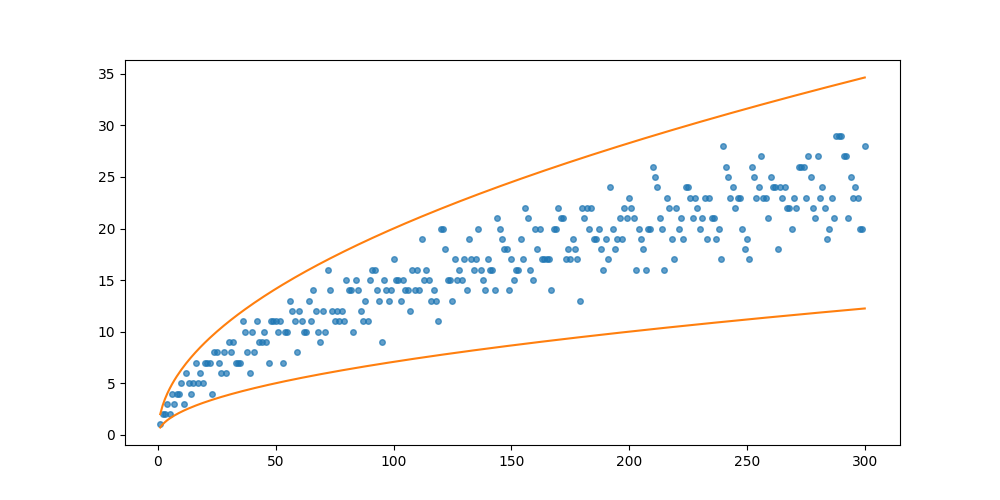}
\caption{Plot of the total number of 2-floor quotients  $\verts{ \ADset_2[1,n] }$ for $n$ up to $300$.
The upper smooth curve is $y = 2 \sqrt{n}$ and the lower curve is  $y = \sqrt{n/2}$.}
\label{fig:total-count-300}
\end{figure}
%%%%%%%%%%%%%%
% End Figure 3 
% figure 52
%%%%%%%%%%%%%

%%%%%%%%%%%%%%%%%%%%%%
%  PROBABILITY MODEL HEURISTIC
%%%%%%%%%%%%%%%%%%%%%%%%
We formulate a probabilistic heuristic that makes a prediction the (expected) size of the sets $\ADasetlarge(n)$, $\ADasetsmall(n)$ and $\ADset_a[1,n]$,
viewing $a$ as fixed and $n \gg 1$. 

Recall that by the reciprocal-duality property (6) of Theorem~\ref{thm:a-equiv-properties} the value $d$ is an $a$-floor quotient of $n$ if and only if, writing $k = \floorslash{n}{d}$, we have $\floorslash{an}{k} = ad$.
We have three regions to consider.
\begin{enumerate}
\item[(1)] (small region) $1 \le d \le \sqrt{\frac{n}{a}}$.

\smallskip 
In this case we have $d \in \ADaset[1, n]$ by Lemma~\ref{lem:small-a-floor-quotient-interval}. 
Thus $\ADaset[1, n]$ contains exactly
\[
E_{small}= \floor{\sqrt{\frac{n}{a}}} = \sqrt{\frac{n}{a}} + O(1)
\] 
such elements in this region, all of them in $\ADasetsmall(n)$.

\medskip
\item[(2)] (medium region) $\sqrt{\frac{n}{a}} < d \le \sqrt{n}$.

\smallskip
In this region, the cutting length $k = \floorslash{n}{d}$ is in the range $ \sqrt{n} \leq k < \floor{\sqrt{a n}}$. 
The number of integers $k$ in this range is $(\sqrt{a} - 1) \sqrt{n} + O(1)$.

The element $d$ corresponding to $k$ is an $a$-floor quotient if and only if $\floorfrac{an}{k}$ is a multiple of $a$,
in which case  $d = \frac{1}{a} \floorfrac{an}{k}$. 
(Note that over this range of $k$, all values $\floorfrac{an}{k}$ are distinct.) 
As a heuristic, for fixed $n$ we introduce a probabilistic model for accepting  the cutting length $k$ as generating a
% which  replaces $a$-floor quotient pairs $(d, n)$ by 
``random $a$-floor quotient'' $(d', n)$ as follows.
For each $k$ we replace the value  $r := \floorslash{an}{k} \, (\bmod \, a)$  by an (independently drawn) 
random residue $\tilde{r}\,  (\bmod \, a)$ uniformly distributed in $\{0, 1, \ldots, a - 1\}$. 
If $\tilde{r} \equiv 0 \, (\bmod \, a)$, then we  accept $k$ as giving a ``random $a$-floor quotient,'' 
which would be $d' = \frac{1}{a} (\floor{\frac{an}{k}} - r)$.
The model will treat all  $d'$  as  distinct,  although repeated values of $d'$ could occur. 
That is, for a sample of  the model  we count the number of $k$ in  $ \sqrt{n} \leq k < \floor{\sqrt{a n}}$ having $\tilde{r} \equiv 0 \, (\bmod\, a)$.
The probability that $\tilde{r}$ is a multiple of $a$ is $\frac{1}{a}$,
so the expected number of  ``random $a$-floor quotients'' in this region is
\[
	E_{medium} = \frac{1}{a} \verts{\{k : \sqrt{n} \leq k < \floor{\sqrt{a n}} \}} = \left( \frac{1}{\sqrt{a}} - \frac{1}{a} \right) \sqrt{n} + O\left(\frac{1}{a}\right).
\]

\item[(3)](large region) $\sqrt{n} \le d \le n$.
% with $d \in \ADset_1[1,n]$. \\

\smallskip
In this region, the cutting length $k = \floorfrac{n}{d}$ is in the range $1 \leq k \leq \floor{\sqrt{n}}$, which consists of $\sqrt{n} + O(1)$-many integers.
We apply again the  heuristic probabilistic model of ``random $a$-floor quotients''  above for $1 \le k \leq  \floor{\sqrt{n}}$,
%%%%%%%%%%%%%%%%%%%%%%%%%%%%%%%%%%%%%%%%%%
%(Again note  that over this range of $k$, all values $\floorfrac{an}{k}$ are distinct.)
%%%%%%%%%%%%%%%%%%%%%%%%%%%%%%%%%%%%%%%%%%
finding that  the expected number of  ``random $a$-floor quotients'' assigned from this region is
\[
	E_{large} = \frac{1}{a} \verts{\{k : 1 \leq k \leq \floor{\sqrt{n}} \}} = \frac{1}{a} \sqrt{n} + O\left(\frac{1}{a}\right).
\]
\end{enumerate}

The heuristic (2) (taken together with the rigorous bound (1)) suggests that the number of elements of $\ADasetsmall(n)$ is approximately $(\frac{2}{\sqrt{a}} - \frac{1}{a}) \sqrt{n}$.
The heuristic (3) suggests that the number of elements of $\ADasetlarge(n)$ is approximately $\frac{1}{a} \sqrt{n}$.
Combining the heuristics on the two ranges predicts that the expected number of elements of $\ADset_a[1,n]$ is approximately $2 \sqrt{\frac{n}{a}}$.

In comparison to the heuristic for $\ADset_a[1,n]$,
Lemma~\ref{lem:small-a-floor-quotient-interval} yields a rigorous lower bound 
for the number of elements in $\ADset_a[1,n]$,
that  is half the value predicted combining  (1), (2), (3) 
for the  ``expected'' number of elements. % in $\ADset_a[1,n]$.

%%%%%%%%%%
% Remark 5.4
%%%%%%%%%%
\begin{rmk}\label{rmk:54}
(1)  It is an interesting question, not addressed here, to determine information on the size of the scatter of values for
total $a$-floor quotients $\verts{ \ADaset[1,n] }$ visible in Figure \ref{fig:total-count-300}, around their ``expected size'' $f_{a}(n) :=  2 \sqrt{\frac{n}{a}}$. 

(2) One may also ask whether, for  fixed $a$, 
 asymptotic formulas  $\verts{ \ADsetsmall(n) } \sim (\frac{2}{\sqrt{a}} - \frac{1}{a}) \sqrt{n}$ 
and $\verts{ \ADsetlarge(n) } \sim \frac{1}{a} \sqrt{n}$ might hold as $n \to \infty$.

\end{rmk}

%%%%%%%%%%%%%%%%%%%%%%%%%%%%%%%%%%%
%
%%Ssection 6
%
%%%%%%%%%%%%%%%%%%%%%%%%%%%%%%%%%%
\section{Floor Multiples for \asafe-Floor Quotient Orders}\label{sec:a-floor-multiples}  
\setcounter{equation}{0}

Recall that the $1$-floor multiples of a fixed positive integer $d$ are the integers $n$ such that $d \AD n$.
In  \cite{LagR:23a} it was shown that the  $1$-floor multiples of $d$ form a numerical semigroup.
Here we  extend the results to $a$-floor quotients of $d$.

%%%%%%%%%%%%
% Definition  6.1 // 
%%%%%%%%%%%%
\begin{defi}\label{def:36}
We call $n$  an {\em $a$-floor multiple of $d$}  if $d$ is an $a$-floor quotient of $n$.
We let  $\ADmult_a(d)$ 
 denote the set of floor multiples of $d$,
 i.e.
\[
\ADmult_a(d) := \{ n\in \NNplus : d \ADa  n \}. 
\]
\end{defi}

%similarly the   $a$-floor multiples for $d$ are those $n$ with  $d \ADa n$.

 Recall that a  {\em numerical semigroup} is a subset of $\NNplus$
that is closed under addition
and   contains all but finitely many elements of $\NNplus$.
The {\em Frobenius number}  of a numerical semigroup is the largest integer
not belonging to the semigroup.  We determine the Frobenius numbers
and minimal generating sets of these numerical semigroups. 
 For general information on  numerical semigroups and of the
 Frobenius number of a numerical semigroup, see  
 Ram\'{i}rez Alfons\'{i}n~\cite{RamirezA:05} and Assi and Garcia-S\'{a}nchez \cite{AssiG:16}.

%%%%%%%%%%%%%%%%%%%
%
% Section 6.1: a-floor multiple semigroups
%
%%%%%%%%%%%%%%%%%%%
\subsection{Semigroup of 1-floor multiples}\label{sec:41}

The numerical semigroup property was proved for the case $a = 1$ in \cite[Theorem 3.8]{LagR:23a}.

%%%%%%%%%%%%%%%%%%%%%%%%%%%%%%%%%%%
%
% Theorem 6.2 (Also thm 2.2 } Floor mutiple semigroup theorem
%
%%%%%%%%%%%%%%%%%%%%%%
\begin{thm}[$1$-floor multiple numerical semigroup]
\label{thm:floor-multiple-struct}
The set $\ADmult_1(d)$ of floor multiples of $d$ is a numerical semigroup.
% that is,  it is closed under addition
% and it includes all sufficiently large integers.
It has the following properties.
\begin{enumerate}
\item 
The largest integer not in the numerical semigroup $\ADmult_1(d)$, its Frobenius number,  is $d^2-1$.
\item
There are exactly $\frac{1}{2} (d-1)(d+2) = \frac{1}{2} (d^2+d - 2) $ positive integers not in $\ADmult_1(d)$.
\item
The  minimal generating set of $\ADmult_1(d)$ has $d$ generators, 
 with $\ADmult_1(d) = \angles{  \gamma_0, \gamma_1, \ldots, \gamma_{d - 1} }$
where
 $\gamma_j = (j + 1)d + j =  j(d + 1) + d$
for $0 \le j \le d - 1$.
\end{enumerate}
\end{thm}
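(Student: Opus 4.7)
The plan is to extract an explicit arithmetic description of $\ADmult_1(d)$ from the cutting property and then read off all three claims from it. First I would apply Theorem~\ref{thm:a-equiv-properties}(1) specialized to $a = 1$: the condition $d \AD n$ asserts the existence of a positive integer $k$ with $d = \floorfrac{n}{k}$, equivalently $dk \leq n < (d + 1)k$. Writing $n = qd + s$ by the division algorithm with $0 \leq s \leq d - 1$ (so $q = \floorfrac{n}{d}$), such a $k$ exists if and only if $q > s$ (one may take $k = q$). This yields
\[
\ADmult_1(d) \;=\; \{\, qd + s : q \geq 1,\ 0 \leq s \leq d - 1,\ q > s \,\}.
\]
Closure under addition then reduces to the identity $(q_1 d + s_1) + (q_2 d + s_2) = (q_1 + q_2)d + (s_1 + s_2)$, adjusted by a carry when $s_1 + s_2 \geq d$; in either subcase the uniform slack $q_1 + q_2 \geq s_1 + s_2 + 2$ (coming from $q_i \geq s_i + 1$) forces the new $q$-coordinate to exceed the new $s$-coordinate, so $\ADmult_1(d)$ is a numerical semigroup.

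Claims (1) and (2) I would then deduce by parametrizing the complement $\NNplus \setminus \ADmult_1(d)$ as the set of pairs $(q, s)$ with $0 \leq q \leq s \leq d - 1$ and $(q, s) \neq (0, 0)$. A direct count gives $\sum_{s = 1}^{d - 1}(s + 1) = \tfrac{1}{2}(d - 1)(d + 2)$ omitted positive integers, the largest being $(d - 1)d + (d - 1) = d^2 - 1$; since $d^2 = d \cdot d + 0$ satisfies $q > s$, every integer from $d^2$ onward lies in $\ADmult_1(d)$, so the Frobenius number is exactly $d^2 - 1$.

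For claim (3), membership $\gamma_j \in \ADmult_1(d)$ is immediate from $q = j + 1 > j = s$, and that the $\gamma_j$ generate $\ADmult_1(d)$ follows from the single identity
\[
n = qd + s = \gamma_s + (q - s - 1)\,\gamma_0 \qquad (n \in \ADmult_1(d)),
\]
a legitimate nonnegative integer combination because $q - s - 1 \geq 0$ and $s \in \{0, 1, \ldots, d - 1\}$. Minimality is the step I expect to be the main obstacle, since one has to exclude every way of splitting $\gamma_j$ as a sum of two semigroup elements, including those whose $s$-coordinates carry mod $d$. My approach is to exploit that $\gamma_0, \ldots, \gamma_{d - 1}$ form a complete residue system mod $d$ with $\gamma_j \equiv j \pmod d$: if $\gamma_j = m_1 + m_2$ with $m_i = q_i d + s_i \in \ADmult_1(d)$, then reducing mod $d$ forces $s_1 + s_2 \in \{j,\, j + d\}$, and correspondingly $q_1 + q_2 \in \{j + 1,\, j\}$; in both subcases the slack bound $q_1 + q_2 \geq s_1 + s_2 + 2$ delivers an immediate contradiction. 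Hence no $\gamma_j$ splits as such a sum, and $\{\gamma_0, \ldots, \gamma_{d-1}\}$ is the minimal generating set as claimed.
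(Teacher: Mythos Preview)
Your proof is correct. The paper does not prove this statement directly (it is quoted from \cite{LagR:23a}), but its proof of the general $a$-version (Theorem~\ref{thm:a-floor-multiple-struct}) rests on the same arithmetic characterization you extract, namely that $n=qd+s$ with $0\le s<d$ lies in $\ADmult_1(d)$ iff $q>s$ (Lemma~\ref{lem:a-multiples-elem} at $a=1$), and the counts for (1) and (2) are identical. The only differences are cosmetic: for closure the paper reduces the general case to the $a=1$ case rather than arguing directly via your slack inequality $q_1+q_2\ge s_1+s_2+2$, and for minimality the paper shows $\gamma_i-\gamma_\ell\notin\ADmult_a(d)$ whereas you rule out any splitting $\gamma_j=m_1+m_2$ by a carry/no-carry case analysis; both variants are short and your route is slightly more self-contained.
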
 

%%%%%%%%%%%%%%%%%%%
%
% Section 6.2: a-floor multiple semigroups
%
%%%%%%%%%%%%%%%%%%%
\subsection{Semigroup of \asafe-floor multiples}\label{sec:42} 

We treat the case of general $a$. 
%%%%%%%%%%%
%
% Theorem 6.3
%
%%%%%%%%%%%
\begin{thm}[$a$-floor multiple numerical semigroup]
\label{thm:a-floor-multiple-struct}
The set $\ADmult_a(d)$ of $a$-floor multiples of $d$
is a numerical semigroup.
It has the following properties.
\begin{enumerate}
\item [(1)]
The largest integer not in $\ADmult_a(d)$, its Frobenius number,  is 
$(d-1)(ad+1)$.

\item[(2)]
The number of positive integers not in $\ADmult_a(d)$ is $\frac12(d-1)(ad+2)$.

\item [(3)]
The minimal generating set of $\ADmult_a(d)$ is
$
\angles{ \gamma_0, \gamma_1, \ldots, \gamma_{d - 1} }
$
where 
\[
	\gamma_j = (ja + 1) d  + j = j(ad+1) + d \quad \mbox{for} \quad 0 \le j \le d - 1.
\]
\end{enumerate}
\end{thm}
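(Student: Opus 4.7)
My plan is to exploit the strong-remainder characterization of Theorem~\ref{thm:a-equiv-properties}(4) to obtain a concrete description of $\ADmult_a(d)$, then carry out the Frobenius, genus, and minimal generating set computations via the Apéry set of $\ADmult_a(d)$ with respect to the element $d$, in parallel with the $a=1$ case handled in \cite{LagR:23a}. The first step is to translate Theorem~\ref{thm:a-equiv-properties}(4) into the cleaner assertion that $n \in \ADmult_a(d)$ if and only if, writing $n = dk + r$ for the unique integers with $0 \le r < d$ and $k = \floor{n/d}$, one has $k \ge ar + 1$; this is immediate since the condition $0 \le r < \min(d, k/a)$ collapses to $r < k/a$ once $r < d$ is enforced. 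In particular $r = 0$ imposes only $k \ge 1$, so every positive multiple of $d$ lies in $\ADmult_a(d)$.

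Next I would verify additive closure. Given $n_i = dk_i + r_i$ in $\ADmult_a(d)$, I split on whether $r_1 + r_2 < d$ or $r_1 + r_2 \ge d$ and, in each case, write $n_1 + n_2 = dK + R$ with $0 \le R < d$; using $k_1 + k_2 \ge a(r_1 + r_2) + 2$ one checks $K \ge aR + 1$ in both cases, with room to spare in the second due to the carry. Cofiniteness is immediate since $\floor{n/d}$ grows with $n$, so $\ADmult_a(d)$ is a numerical semigroup. For each residue $j \in \{0, 1, \ldots, d-1\}$, the smallest element of $\ADmult_a(d)$ in that class corresponds to the least admissible quotient $k = aj + 1$, yielding
\[
\gamma_j = d(aj + 1) + j = j(ad + 1) + d.
\]
Hence the Apéry set of $\ADmult_a(d)$ relative to $d$ is $\{0, \gamma_1, \ldots, \gamma_{d-1}\}$, and every $n \in \ADmult_a(d)$ with $n \equiv j \pmod d$ has the form $\gamma_j + m d = \gamma_j + m \gamma_0$ for some $m \ge 0$, so $\ADmult_a(d) = \langle \gamma_0, \ldots, \gamma_{d-1}\rangle$. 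The Frobenius number $\max_j \gamma_j - d = (d-1)(ad+1)$ gives (1), and Selmer's formula for the number of gaps,
\[
\tfrac{1}{d} \sum_{j=0}^{d-1} \gamma_j - \tfrac{d-1}{2},
\]
combined with $\sum_{j=1}^{d-1} \gamma_j = \tfrac{1}{2}(d-1)d(ad+3)$, yields (2) after a short computation.

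For the minimality assertion in (3), I would show that no $\gamma_j$ with $j \ge 1$ can be written as a sum of two nonzero elements of $\ADmult_a(d)$. Decomposing $\gamma_j = s_1 + s_2$ with $s_i = dk_i + r_i$ in $\ADmult_a(d)$, the lower bound $k_1 + k_2 \ge a(r_1 + r_2) + 2$ obtained above conflicts with the target quotient (which would need to equal $aj + 1$ when $r_1 + r_2 = j$, or $aj$ after carrying when $r_1 + r_2 = j + d$); since the $\gamma_j$ also lie in distinct residue classes mod~$d$, each is a minimal generator. The step with real mathematical content is the additive-closure check, because of the subtle modular wrap-around when $r_1 + r_2 \ge d$; once that is in place, the Frobenius, genus, and minimality assertions follow by routine Apéry-set bookkeeping.
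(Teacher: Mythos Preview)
Your proof is correct, with one minor notational slip: in the displayed Selmer formula you write $\sum_{j=0}^{d-1}\gamma_j$, but since $\gamma_0=d$ while the Ap\'ery element in residue class~$0$ is~$0$, the sum should run from $j=1$ (as you in fact use in the line that follows). The computation itself is fine.

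Your route differs from the paper's in three places. For additive closure, the paper avoids your case split on $r_1+r_2$ by observing that $n\mapsto an$ identifies $\ADmult_a(d)$ with $a\NNplus\cap\ADmult_1(ad)$, an intersection of two sets already known to be closed under addition; this is slicker but imports the $a=1$ result from~\cite{LagR:23a}, whereas your argument is self-contained. For the genus count~(2), the paper enumerates gaps directly from the membership criterion $aj<k$ by summing $aj+1$ over $1\le j\le d-1$, while you invoke Selmer's Ap\'ery-set formula; both are short. For minimality in~(3), the paper shows $\gamma_i-\gamma_\ell\notin\ADmult_a(d)$ for each $\ell<i$, whereas you prove the (formally stronger) irreducibility statement that no $\gamma_j$ with $j\ge1$ is a sum of two nonzero semigroup elements. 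Your Ap\'ery-set framing is the standard numerical-semigroup viewpoint and makes the Frobenius and genus assertions essentially automatic once the Ap\'ery set is identified; the paper's approach is more hands-on but slightly ad hoc.
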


The proof of Theorem \ref{thm:a-floor-multiple-struct}
follows the same outline as the $a=1$ case given in \cite[Theorem 4.2]{LagR:23a}.
% but uses the $a=1$ case as a black box in the argument.

%%%%%%%%%
% Lemma 6.4 N
%%%%%%%%%
\begin{lem}\label{lem:a-multiples-add} 
The set $\ADmult_a(d)$ of $a$-floor multiples of $d$ is closed under addition.
\end{lem}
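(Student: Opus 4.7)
The plan is to use the covering property characterization of $a$-floor quotients from Theorem~\ref{thm:a-equiv-properties}(2) to exhibit, for $m, n \in \ADmult_a(d)$, a cutting length for $(d, m+n)$ constructed as the sum of cutting lengths for $(d,m)$ and $(d,n)$. This mirrors the usual ``addition of fractions'' heuristic: if $m$ is ``approximated by $d$ scaled by $k_1$'' and $n$ is ``approximated by $d$ scaled by $k_2$,'' then $m+n$ should be approximated by $d$ scaled by $k_1+k_2$.

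First I would extract positive integers $k_1, k_2$ satisfying $k_i[d, d+1/a) \supseteq [m_i, m_i + 1/a)$ for $(m_1, m_2) = (m, n)$. Unpacking each dilated interval as $k_i[d, d+1/a) = [k_i d, k_i d + k_i/a)$, these inclusions are equivalent to the two-sided bounds $k_i d \le m_i \le k_i d + (k_i-1)/a$. Adding the two bounds yields
\[
(k_1+k_2)\,d \;\le\; m + n \;\le\; (k_1+k_2)\,d + (k_1+k_2-2)/a.
\]
Setting $k := k_1 + k_2$ and using the trivial inequality $(k-2)/a \le (k-1)/a$, this rearranges to the covering-property condition $k[d, d+1/a) \supseteq [m+n, m+n+1/a)$. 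Invoking Theorem~\ref{thm:a-equiv-properties}(2) once more concludes $d \ADa (m+n)$, hence $m+n \in \ADmult_a(d)$.

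I do not anticipate any serious obstacle: the covering property converts the lemma to a one-line additivity of real intervals, with all the $a$-dependence absorbed into the uniform slack factor $1/a$. As an alternative route, one could argue via the strong remainder property (Theorem~\ref{thm:a-equiv-properties}(4)), writing $m = dk_1 + r_1$ and $n = dk_2 + r_2$ with remainders $r_i$ bounded by $\min(d, k_i/a)$; there a minor case split according to whether $r_1 + r_2 < d$ or $r_1 + r_2 \ge d$ (choosing the cutting length to be $k_1 + k_2$ or $k_1 + k_2 + 1$, respectively) would be needed to guarantee $0 \le r < d$ in the combined representation. The covering-property approach is cleaner because it avoids this case analysis entirely.
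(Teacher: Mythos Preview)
Your proof is correct. The covering-property inequalities are exactly as you say, and adding them gives the desired cutting length $k_1+k_2$ for $m+n$ with the slack $(k-2)/a \le (k-1)/a$ doing the work.

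However, your route differs from the paper's. The paper does not argue directly from any of the six characterizations; instead it observes that the map $n \mapsto an$ is an addition-preserving bijection from $\ADmult_a(d)$ onto $a\NNplus \cap \ADmult_1(ad)$, and then notes that this intersection is closed under addition because each of $a\NNplus$ and $\ADmult_1(ad)$ is (the latter by the already-established $a=1$ case, Theorem~\ref{thm:floor-multiple-struct}). Your argument is more self-contained: it treats all $a\ge 1$ uniformly and does not need the $a=1$ result as a black box, so in particular it re-proves the $a=1$ closure statement in passing. The paper's argument is shorter but leans on the earlier paper; yours exposes the underlying additivity of the covering intervals directly. Your remark about the alternative via the strong remainder property is also accurate, including the case split it would require.
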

\begin{proof}
By definition of the relation $\fquo{a}$, 
the map $n\mapsto an$
gives a bijection from $\ADmult_a(d)$ to the intersection
$a\NNplus \cap \ADmult_1(ad)$.
This map respects addition, so it suffices to check that the image of $\ADmult_a(d)$ is closed under addition.
The intersection $a\NNplus \cap \ADmult_1(ad)$
is closed under addition
because $a\NNplus$ and $\ADmult_1(ad)$ are both closed under addition.
\end{proof}

%%%%%%%%
% Lemma 6.5 N
%%%%%%%%%%
\begin{lem}\label{lem:a-multiples-elem} 
Fix positive integers $a$ and $d$.
Let $\ADmult_a(d)$ denote the set of $a$-floor multiples of $d$.
\begin{enumerate}
\item 
For any positive integer $n$,
write $n = kd + j$ for  unique integers $j, k$ satisfying ${0 \leq j < d}$.
Then $n \in \ADmult_a(d)$ if and only if $aj < k$.

\item
If $n\geq ad^2 - (a-1) d$,
then $n \in \ADmult_a(d)$.

\end{enumerate}
\end{lem}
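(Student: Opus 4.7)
The plan is to deduce part~(1) directly from the $a$-strong remainder characterization of $\ADa$, and then to obtain part~(2) as an immediate arithmetic consequence of part~(1).

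For part~(1), I would invoke property~(4) of Theorem~\ref{thm:a-equiv-properties}: the relation $d \ADa n$ holds if and only if there exists a positive integer $k'$ with $n = k'd + r'$ and $0 \le r' < \min(d, k'/a)$. The key observation is that since $r' < d$, the pair $(k', r')$ must coincide with the standard Euclidean quotient and remainder of $n$ by $d$; hence $k' = k$ and $r' = j$ are forced, and the bound $r' < k'/a$ reduces to the stated integer inequality $aj < k$.

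For part~(2), I would rewrite the threshold as $ad^2 - (a-1)d = d\bigl(a(d-1)+1\bigr)$, so that the hypothesis $n \ge d(a(d-1)+1)$, combined with $0 \le j \le d-1$, forces $k = (n-j)/d$ to satisfy the inequality $kd \ge d(a(d-1)+1) - (d-1)$. Dividing by $d$ and using integrality of $k$ (together with $(d-1)/d < 1$) yields $k \ge a(d-1) + 1$. Then $aj \le a(d-1) < k$, so the criterion of part~(1) applies and $n \in \ADmult_a(d)$.

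Neither step presents a serious obstacle: part~(1) is essentially a rereading of property~(4) under the uniqueness clause for Euclidean division, while part~(2) is a short integrality argument. The only small care needed is to track strict versus non-strict inequalities when passing from the lower bound on $kd$ to one on $k$.
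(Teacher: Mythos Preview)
Your proof is correct. For part~(1) you use characterization~(4) of Theorem~\ref{thm:a-equiv-properties} (the $a$-strong remainder property), whereas the paper uses characterization~(6) (the $a$-reciprocal-duality inequality $n/\floor{n/d} < d + 1/a$) and then computes $n/\floor{n/d} = d(1 + j/(kd))$ to reach the same inequality $aj < k$. Your route is arguably the more direct one: since property~(4) is already phrased in terms of a decomposition $n = dk' + r'$ with $0 \le r' < d$, the uniqueness of Euclidean division immediately identifies $(k',r')$ with $(k,j)$, and the extra bound $r' < k'/a$ is exactly $aj < k$. The paper's route has the minor advantage that it reuses the form of~(6) already employed elsewhere in the hierarchy arguments, but nothing is gained or lost mathematically. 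For part~(2) your argument and the paper's are the same: both extract $k \ge a(d-1)+1$ from the threshold $n \ge d(a(d-1)+1)$ and then note $aj \le a(d-1) < k$.
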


\begin{proof}
(1) 
The $a$-reciprocal duality property, Theorem~\ref{thm:a-equiv-properties} (6), states that
\[
	d \ADa n  \qquad\text{if and only if}\qquad
	\frac{n}{\floor{n/d}} < d + \frac1{a}.
\] 
Given that $n = kd +j$ with $j \in \{0, 1, \ldots d - 1\}$,
we have
\[
	\floor{\frac{n}{d}} = k = \frac{n - j}{d}.
\]
The expression $n / \floor{n/d}$ can be rewritten as
\[
	\frac{n}{\floor{n / d}} = \frac{n}{(n - j) / d} = d \left( 1 + \frac{j}{n - j} \right) = d\left(1 + \frac{j}{kd}\right).
\]
Thus the $a$-reciprocal duality property implies that
\[
	d \ADa n \qquad\text{if and only if}\qquad
	d\left(1 + \frac{j}{kd}\right) < d\left(1 + \frac1{ad}\right),
\]
which is equivalent to the condition $aj < k$ as claimed.
 
(2) 
Suppose  $n \ge ad^2 - (a-1)d$,
so 
\[
	n = kd + j \qquad\text{for } j \in \{0,1,\ldots, d-1\}
	\text{ and } k \geq ad - a + 1.
\] 
We then have
\[
	% k / j > \frac{ad - a + 1}{d - 1} = a + \frac{1}{d - 1} > a.
	aj \leq a(d - 1) < k.
\]
Thus $d \ADa n$ by part (1).
\end{proof}

%The proof parallels Lemma \ref{lem:multiples-elem}.

%%%%%%%%%%%%%%%%%%%%%%%%%%%
%Proof of Theorem 6.3
%%%%%%%%%%%%%%%%%%%%%%%%%%%
\begin{proof}[Proof of Theorem \ref{thm:a-floor-multiple-struct}]
Lemma \ref{lem:a-multiples-add} shows $\ADmult_a(d)$ is an additive semigroup, 
and  Lemma \ref{lem:a-multiples-elem} (2)  then shows $\ADmult_a(d)$ is a  numerical semigroup.

(1) The condition in Lemma~\ref{lem:a-multiples-elem} (1) implies that $n=ad^2 -(a-1)d -1$
is not in $\ADmult_a(d)$; in this case $n = kd+j$ has
$k= a(d-1)$ and  $j=d-1$ with $aj =  k$. 
On the other hand, Lemma~\ref{lem:a-multiples-elem} (2) shows that   all integers $n \ge ad^2-(a-1)d$
belong to $\ADmult_a(d)$.

(2) To count the size of $\NNplus \setminus \ADmult_a(d)$,
we apply the condition in Lemma~\ref{lem:a-multiples-elem} (1).
That condition  counts  the set  $n = kd + j$ with $0 \le k \le ad$ and $0 \le j \le d-1$
such that $aj \ge k \ge 0$, excluding the pair $(j,k)=(0,0)$, because $n=0$ is excluded from $\NNplus$.
  We sum over $0 \le j \le d-1$, noting for  $j=0$ there
are no solutions, and there are $aj+1$ solutions for $1 \le j \le d-1$.  We obtain
\[
	\verts{\NNplus \setminus \ADmult_a(d)} = \sum_{j=1}^{d-1} (aj+1) = a\left( \frac{d(d-1)}{2}\right) + d-1 = \frac{1}{2} ( d-1)(ad+2). 
\]

(3) Recall that $\gamma_i = ((i-1)a + 1)d + i-1$.
Each generator $\gamma_i $ has $\gamma_i= kd + j$  with $j = i - 1$ and $k = (i-1)a+1$, so $aj < k$, whence   
by Lemma~\ref{lem:a-multiples-elem} (1)  $\gamma_i $  is in $\ADmult_a(d)$.
% since $\gamma_j = kd+ j$ with $k= 
Thus we have the containment 
$\angles{ \gamma_1,\ldots, \gamma_d } \subset \ADmult_a(d)$.
The reverse containment $\ADmult_a(d) \subset \angles{\gamma_1,\ldots, \gamma_d }$
 holds because 
$d \in \ADmult_a(d)$ and 
the generating set
$\gamma_1,\gamma_2,\ldots,\gamma_d$ contains the minimal element of $\ADmult_a(d)$ in each residue class modulo $d$,
which also follows from Lemma~\ref{lem:a-multiples-elem}.

It remains to prove  that $\gamma_1,\ldots,\gamma_d$ is  minimal as a generating set  
of $\ADmult_a(d)$,
i.e. that no strict subset of $\{\gamma_1,\ldots, \gamma_d\}$ generates $\ADmult_a(d)$. 
Since $\gamma_1 < \gamma_2 < \cdots < \gamma_d$,
it suffices to show that $\gamma_i$ is not contained in the numerical semigroup $\angles{ \gamma_1, \ldots, \gamma_{i-1} }$.

Suppose for the sake of contradiction that $\gamma_i \in \angles{ \gamma_1, \ldots, \gamma_{i-1} }$.
Then for some $\ell < i$, 
we have 
$
\gamma_i - \gamma_\ell  
%= (jd - (j-1)) - (id - (i-1))
%= (j-i)d - (j-i)
\in \angles{\gamma_1, \ldots, \gamma_{i-1} }.
$
However, 
we have the inclusion
$\angles{\gamma_1, \ldots, \gamma_{i-1} }\subset \ADmult_a(d)$,
and 
Lemma~\ref{lem:a-multiples-elem}(2)  implies that 
\[
\gamma_i - \gamma_\ell  
%%%%%%%%%%%%%
%= (jd - (j-1)) - (id - (i-1))
%%%%%%%%%%%%%%
= (i- \ell)ad - (i-\ell) = kd+j
 \not\in \ADmult_a(d) ,
\]
since $aj=k$.
This contradiction implies that $\gamma_i \not \in \angles{\gamma_1, \ldots, \gamma_{i-1} }$,
so $\gamma_1,\ldots,\gamma_d$ is the minimal  generating set  
of $\ADmult_a(d)$ as claimed.
\end{proof} 

%%%%%%%%%%%%%%%%%%%%%%%%%%%%%%%%%%%%%%%%%%%%%%%%%%%%%%
%
% section 7
%
%%%%%%%%%%%%%%%%%%%%%%%%%%%%%%%%%%%%%%%%%%%%%%%%%%%%%
\section{Average size of intervals \texorpdfstring{$\verts{ \ADset_a[1,n] }$}{|Qa[1,n]|}}\label{sec:7} 

We use the structure of floor multiples to estimate the average size of $a$-floor quotient initial intervals for fixed $a$ and variable $n$.
For context, recall that when $a = 1$ the number of $1$-floor quotients satisfies
\[
	\verts{ \ADset_1[1, n] } = 2 \sqrt{n} + O(1) .
\]
Hence, the average size of $\verts{ \ADset_1[1, n] }$ in the range $1 \leq n \leq x$ is asymptotic to
\[
	\frac{1}{x} \sum_{n = 1}^x \verts{ \ADset_1[1,n] } 
	= \frac{1}{x} \left( \int_{t = 0}^x 2 \sqrt{t} \, dt +  O(x)\right)
	= \frac{4}{3} \sqrt{{x}} + O(1).
\]
  
 %%%%%%%%%%%%%%%%%%%%%%%%%%%%%%%%%%%%%%%%%%%%%%%%%%%%%%
%
% section 7.1
%
%%%%%%%%%%%%%%%%%%%%%%%%%%%%%%%%%%%%%%%%%%%%%%%%%%%%%%%
\subsection{Averaged number of \asafe-floor quotients}
\label{subsec:70} 

We have the following estimate, averaging  over all  $n \le x$.

%%%%%%%%
% Theorem 7.1
%%%%%%%%%
\begin{thm}\label{thm:52}
For all integers $a \ge 1$ and all $x \ge 1$,
\begin{equation} 
\label{eqn:averaged}
\frac{1}{x} \sum_{n=1}^x \verts{ \ADset_a[1,n] } = \frac{4}{3} \sqrt{\frac{x}{a}} + O \left(  \log x +a \right),
\end{equation}
in which the $O$-constant is independent of $a$ and $x$.
\end{thm}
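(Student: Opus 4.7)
The plan is to exchange summation orders and evaluate $T := \sum_{n=1}^x \verts{\ADaset[1,n]}$ by counting lattice points under a family of hyperbolas. Swapping to count pairs $(d, n)$ with $d \ADa n$ and $n \le x$, I rewrite $T = \sum_{d=1}^x \verts{\ADmult_a(d) \cap [1,x]}$. Then Lemma~\ref{lem:a-multiples-elem}(1) classifies the $a$-floor multiples: writing $n = kd + j$ with $k \ge 1$ and $0 \le j \le d-1$, the relation $d \ADa n$ is equivalent to $aj < k$. Collecting the resulting triple sum over $(d, k, j)$ with the $j$-variable outermost yields
\[
T = \sum_{j \ge 0} N_j, \qquad N_j := \verts{\{(d, k) \in \NNplus \times \NNplus : d \ge j+1,\ k \ge aj+1,\ dk \le x - j\}}.
\]
The region defining $N_j$ is empty once $(j+1)(aj+1) > x - j$, so $j$ ranges up to $J := \sfloor{\sqrt{x/a}} + O(1)$.

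Next, I estimate each $N_j$ using a Dirichlet-type hyperbola count. The $j = 0$ term is the divisor summatory $N_0 = \sum_{d=1}^x \sfloor{x/d} = O(x \log x)$. For $j \ge 1$, applying the estimate $\sum_{d=j+1}^M \sfloor{y/d} = y \log(M/j) + O(y/j + M)$ with $y = x - j$ and $M = \sfloor{(x-j)/(aj+1)}$ gives
\[
N_j = (x - j) \log\!\left( \frac{x-j}{j(aj+1)}\right) + a j^2 - (x - j) + O\!\left( \frac{x}{j} \right).
\]
Summing from $j = 1$ to $J$, I expand $\log(aj + 1) = \log a + \log j + O(1/(aj))$ and use the Stirling-type estimate $\sum_{j \le J} \log j = J \log J - J + O(\log J)$ together with the key cancellation identity $\log x - 2 \log J - \log a = O(1/J)$, which follows from $J = \sqrt{x/a} + O(1)$. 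The dominant $O(xJ \log x)$-scale contributions cancel, and the surviving leading terms $2xJ$ (from the logarithmic sum), $-xJ$ (from $-\sum (x-j)$), and $\tfrac{aJ^3}{3} = \tfrac{x^{3/2}}{3\sqrt{a}} + O(x)$ (from $\sum a j^2$) combine via $xJ = x^{3/2}/\sqrt{a} + O(x)$ to yield
\[
\sum_{j=1}^J N_j = \frac{4}{3} \cdot \frac{x^{3/2}}{\sqrt{a}} + O(x \log x + ax),
\]
the error absorbing $\sum_{j \le J} O(x/j) = O(x \log x)$ as well as the subleading $\sum_{j \le J} aj^2 = O(aJ^2) = O(x) \subseteq O(ax)$. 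Combining with $N_0$ and dividing by $x$ produces the asserted asymptotic.

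The main obstacle is verifying the cancellation of the $O(xJ \log x)$-scale contributions uniformly in $a$. The identity $\log x - 2 \log J - \log a = O(1/J)$ is what reduces this potentially enormous term to $O(x)$, but it is delicate because $J$ is only an integer approximation of $\sqrt{x/a}$ and because $\log(aj + 1) \ne \log a + \log j$ exactly; a careful bookkeeping of the several subleading logarithmic sums is required. The $+a$ term in the final error bound must also accommodate the large-$a$ regime: once $a \ge x$, Theorem~\ref{thm:interval-hierarchy-1} collapses $\ADaset[1, n]$ to $\sD[1, n]$ for all $n \le x$, so $T = \sum_{n \le x} \tau(n) = x \log x + O(x)$, whence $T/x = \log x + O(1) \le O(a)$. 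Confirming that the analysis matches smoothly between the small-$a$ and large-$a$ regimes, and that the error estimates remain uniform as the boundary $j \approx J$ is approached, is the subtle point of the argument.
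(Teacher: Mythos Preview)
Your approach is correct and genuinely different from the paper's. Both start from the same double sum $T=\sum_{d\le x}\verts{\ADmult_a(d)\cap[1,x]}$ and the characterization $n=kd+j$, $aj<k$ from Lemma~\ref{lem:a-multiples-elem}. The paper keeps $d$ outermost: it first packages $\sigma_{a,d}(x)=\sum_{j=0}^{d-1}\lfloor (x-j(ad+1))/d\rfloor^+$, then proves a two-phase pointwise estimate for $\sigma_{a,d}(x)$ (quadratic for $x\le ad^2$, linear for $x\ge ad^2$), and finally splits the $d$-sum into the three ranges $d\le\sqrt{x/a}$, $\sqrt{x/a}<d\le\sqrt{x}$, $\sqrt{x}<d\le x$, treating each in a separate lemma. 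You instead move $j$ outermost, so each $N_j$ is a single hyperbola count $\{(d,k):d\ge j{+}1,\,k\ge aj{+}1,\,dk\le x{-}j\}$; there is then only one range (the $j$-range $0\le j\le J\approx\sqrt{x/a}$), and the phase transition is replaced by the logarithmic cancellation $\log x-\log a-2\log J=O(1/J)$.

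I checked your $N_j$ asymptotic and the final assembly: with $y=x-j$ and $M=\lfloor y/(aj{+}1)\rfloor$ one has $N_j = y\log(M/j)-aj(M-j)+O(y/j+M)$, which after substituting $M=y/(aj{+}1)+O(1)$ gives your displayed formula plus a harmless $y/(aj{+}1)=O(x/j)$ term. Summing, the pieces $2xJ$, $-xJ$, and $aJ^3/3$ combine exactly as you say to $\tfrac{4}{3}x^{3/2}/\sqrt{a}$, and the $-J^2/2$ contributions from the logarithmic sum and from $-\sum(x-j)$ cancel. Your route is arguably cleaner (one region, one delicate cancellation) and in fact appears to yield the error $O(x\log x)$ uniformly in $a$, since every secondary term ($aJ^2$, $\sum aj$, $\sum x/(aj)$) is $O(x)$ or $O(x\log x)$; the paper's $+ax$ arises from its large-$d$ lemma and may be unnecessary. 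The paper's approach, on the other hand, is more modular and avoids the Stirling-type bookkeeping, at the cost of three separate regional computations.
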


The proof of Theorem \ref{thm:52}  is   based on study of an auxiliary counting function.

%%%%%%%%%%
% Definition 7.2
%%%%%%%%%%
\begin{defi}\label{def:a_mult_count}
The {\em $a$-floor-multiple counting function for $d$}, denoted  $\multcount_{a,d}(x)$, is
% given by
\begin{equation}
\multcount_{a,d}(x) 
:= \verts{ \{ n: \, d \ADa n, \, n \le x\} }
= \verts{ \ADmult_a(d) \cap \{ n \leq x\} }.
\end{equation}
\end{defi} 

The usefulness of this function is based on  the 
%(combinatorial) 
identity
\begin{equation}\label{eqn:key-identity}
 \sum_{n=1}^x \verts{ \ADset_a[1,n] } = \sum_{d=1}^x \multcount_{a,d}(x),
\end{equation}
 which is valid for all $a \ge 1$  and $x \ge 1$.

The result \eqref{eqn:averaged} is proved in three lemmas in Section \ref{subsec:73} which
estimate the contributions of $\multcount_{a,d}(x)$ on the right side of \eqref{eqn:key-identity} for  three regions of $d$ that parallel the
regions of the heuristic in Section \ref{subsec:52b}. 

The estimate in Theorem \ref{thm:52} is compatible with the heuristic for $\verts{ \ADset_a[1,n]}$ being of size $2 \sqrt{\frac{n}{a}}$, 
as given in Section  \ref{subsec:52b}, 
in the sense that the integrated version of the heuristic 
matches the main term on the  right side of  \eqref{eqn:averaged}. 
That is, 
\begin{equation}
\frac{1}{x} \sum_{n=1}^x  2 \sqrt{\frac{n}{a}} = \frac{4}{3} \sqrt{\frac{x}{a}} + O \left( \frac{1}{\sqrt{ax}} \right).
\end{equation}

%%%%%%%%%%%%%%%%%%%%%%%%%%%%%%%%%%%%%%%%%%%%%%%%%%%%%%
%
% section 7.2 Counting a-floor multiples    
%
%%%%%%%%%%%%%%%%%%%%%%%%%%%%%%%%%%%%%%%%%%%%%%%%%%%%%%%
\subsection{Counting \asafe-floor multiples}
\label{subsec:72} 

We give a formula for the $a$-floor multiple counting function values $\multcount_{a,d}(x).$
%%%%%%%%%%%
%
% Proposition . 7.3
%
%%%%%%%%%
\begin{prop}\label{prop:pi-floor-recursion}
Let $a\ge 1$ and $d\ge 1$. 
Then, for all $x \ge 1$, 
\begin{equation}\label{eqn:pi-recursion}
\multcount_{a,d}(x) = \sum_{j=0}^{d-1} \floor{\frac{x- j(ad+1)}{d}}^{+},
\end{equation}
where $\floor{x}^{+} = \max \{ \floor{x}, 0\}$. 
\end{prop}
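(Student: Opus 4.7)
The plan is to partition the counting problem by residue class mod $d$ and apply the explicit membership criterion from Lemma~\ref{lem:a-multiples-elem}(1). Specifically, every positive integer $n$ can be written uniquely as $n = kd + j$ with $0 \le j \le d-1$ and $k \ge 0$ (with the caveat that $(j,k) = (0,0)$ does not correspond to a positive integer), and that lemma says $n \in \ADmult_a(d)$ if and only if $aj < k$. So I would first fix a residue $j \in \{0, 1, \ldots, d-1\}$ and count the integers $k$ satisfying both the membership inequality $k \ge aj + 1$ and the range inequality $n = kd + j \le x$, i.e.\ $k \le (x-j)/d$.

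The number of such $k$ is
\[
\max\Bigl(0,\ \floor{(x-j)/d} - aj\Bigr).
\]
The key (only nontrivial) step is to rewrite this as the single floor expression appearing in \eqref{eqn:pi-recursion}. Since $aj$ is an integer, pulling it inside the floor gives
\[
\floor{\frac{x-j}{d}} - aj \;=\; \floor{\frac{x-j}{d} - aj} \;=\; \floor{\frac{x - j - ajd}{d}} \;=\; \floor{\frac{x - j(ad+1)}{d}},
\]
and the $\max(\,\cdot\,, 0)$ is exactly what the $(\ \cdot\ )^+$ notation encodes.

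Finally, summing the per-residue count over $j = 0, 1, \ldots, d-1$ enumerates each positive integer $n \le x$ in $\ADmult_a(d)$ exactly once (the excluded pair $(j,k) = (0,0)$ contributes $0$ to the $j = 0$ summand automatically, since then $\floor{x/d}^+$ already counts only $k \ge 1$ because $k \ge aj + 1 = 1$). This yields the claimed identity
\[
\multcount_{a,d}(x) = \sum_{j=0}^{d-1} \floor{\frac{x - j(ad+1)}{d}}^{+}.
\]

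There is essentially no obstacle here: the whole content of the proposition is already packaged in Lemma~\ref{lem:a-multiples-elem}(1), and the rest is an elementary accounting exercise. The only small subtlety worth remarking on is the $(\,\cdot\,)^+$ truncation, which is needed precisely to handle residues $j$ large enough that $aj + 1 > (x-j)/d$, i.e.\ residues for which no valid $k$ exists in the range $n \le x$; these contribute zero, matching the truncated floor on the right-hand side.
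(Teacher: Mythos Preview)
Your proof is correct and follows essentially the same approach as the paper: partition by residue class modulo $d$, identify the $a$-floor multiples in each class, and count. The only cosmetic difference is that the paper cites Theorem~\ref{thm:a-floor-multiple-struct} to name the minimal $a$-floor multiple $\gamma_j = j(ad+1)+d$ in each residue class, whereas you go directly through the membership criterion of Lemma~\ref{lem:a-multiples-elem}(1); since the theorem is itself proved from that lemma, the two arguments are equivalent.
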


\begin{proof}
The structure of $a$-floor multiples $n$ given in Theorem~\ref{thm:a-floor-multiple-struct} shows the minimal $a$-floor multiple
in each residue class $j \, (\bmod \, d)$ (taking $0 \le j \le d-1$) is
$\gamma_j = j(ad + 1) + d$. 
After this value, every shift by a multiple of $d$ gives an $a$-floor quotient. 
The $a$-floor multiples of $d$ 
in this residue class, no larger than $x$, are counted  exactly by
$\floorfrac{x - j(ad + 1)}{d}^{+}$. 
Summing over all residue classes $(\bmod \, d)$ gives the result.
\end{proof}

The next result gives a  size estimate for $\multcount_{a,d}(x)$, 
which exhibits a phase transition at $x = ad^2$.  
For fixed $(a, d)$ it initially increases in $x$ following a parabola, and then flattens out to linear growth in $x$, when the numerical semigroup takes all values. 
%%%%%%%%%%%%%%%%%%%%%
% Proposition 7.4
%%%%%%%%%%%%%%%%%%%%
\begin{prop}
\label{prop:sigma-estimate}
For all $a \ge 1$ and $d \ge 1$, there holds
\[
	\multcount_{a,d}(x) = \begin{cases}
	\frac{1}{2}a \left(\frac{x}{ad}\right)^2 + O \left(ad \right) &\quad \text{if} \quad 0 < x \le ad^2, \\[0.5em]
    x  - \frac{1}{2} ad^2 + \frac{1}{2} ad + O(d) &\quad \text{if}\quad ad^2 \leq x.
	\end{cases}
\]
The $O$-constants are independent of $a$ and $d$.
\end{prop}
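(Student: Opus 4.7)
The plan is to apply the explicit closed-form sum
$$\multcount_{a,d}(x) = \sum_{j=0}^{d-1} \floor{\tfrac{x - j(ad+1)}{d}}^{+}$$
from Proposition~\ref{prop:pi-floor-recursion}, identify which indices $j$ survive the positive-part truncation, replace each floor by its real argument (incurring $O(1)$ error per term), and evaluate the resulting arithmetic progression in closed form. The phase transition at $x = ad^2$ reflects the fact that the $j$th term is active precisely when $j \leq x/(ad+1)$; the largest admissible index hits $d-1$ when $x \gtrsim (d-1)(ad+1) \approx ad^2$.

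For the regime $x \geq ad^2$, all $d$ terms are active since $j(ad+1) \leq (d-1)(ad+1) < ad^2 \leq x$, so the $(\cdot)^{+}$ may be dropped uniformly. Replacing each floor by its argument and summing the arithmetic progression gives
$$\sum_{j=0}^{d-1} \frac{x - j(ad+1)}{d} \;=\; x - \frac{(ad+1)(d-1)}{2} \;=\; x - \tfrac{1}{2}ad^2 + \tfrac{1}{2}ad - \tfrac{d-1}{2},$$
and the accumulated floor-rounding error is $O(d)$, which absorbs the $\tfrac{d-1}{2}$ correction and yields the claimed second expansion.

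For the regime $0 < x \leq ad^2$, let $J := \floor{x/(ad+1)}$, so only the terms with $0 \leq j \leq J$ contribute. Writing $J + 1 = x/(ad+1) + O(1)$ and summing in closed form after replacing each floor by its argument yields
$$\multcount_{a,d}(x) \;=\; \frac{(J+1)}{d}\Bigl(x - \frac{(ad+1)J}{2}\Bigr) + O(J+1).$$
The main term telescopes to $\tfrac{x^{2}}{2d(ad+1)}$ plus a correction controlled by $J - x/(ad+1)$, and the identity
$\tfrac{x^{2}}{2d(ad+1)} = \tfrac{1}{2}a\bigl(\tfrac{x}{ad}\bigr)^{2} + O(d)$
in the range $x \leq ad^{2}$ (since the relative correction is $O(1/(ad))$ and the main term is at most $\tfrac{1}{2}ad^{2}$) produces the first expansion.

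The main bookkeeping obstacle is verifying that every error contribution in the small-$x$ regime fits inside $O(ad)$: the $J+1 \leq d$ per-term floor roundings contribute $O(d)$; the $O(1)$ discrepancy between $J$ and $x/(ad+1)$ multiplies an arithmetic-progression slope of size $O(ad)$ in the bracketed factor; and the relative correction $1/(ad+1)$ when passing from the exact $\tfrac{x^{2}}{2d(ad+1)}$ to the stated $\tfrac{1}{2}a(x/(ad))^{2}$ contributes at most $O(d)$. All three are comfortably within $O(ad)$, and the uniformity of the $O$-constants in $a$ and $d$ follows immediately from tracking the constants through these elementary steps.
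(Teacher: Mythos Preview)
Your proposal is correct and follows essentially the same approach as the paper: apply the closed-form sum from Proposition~\ref{prop:pi-floor-recursion}, determine which indices survive the positive-part truncation, drop the floors with $O(1)$ error per term, and sum the resulting arithmetic progression. The only cosmetic difference is in the small-$x$ regime, where the paper cuts off at $\ell = \floor{x/(ad)}$ and bounds the boundary term $j=\ell$ separately by $O(a)$, whereas you cut off at $J = \floor{x/(ad+1)}$; both choices give the same $O(ad)$ error with the same bookkeeping.
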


\begin{proof}
For  the saturation region above the phase change, $x \geq ad^2$, we have
\[
x \ge ad^2 \ge  (d-1)(ad+1)= ad^2 -(a-1)d -1.
\]
Consequently $ \floorfrac{x- j(ad+1)}{d}^{+}= \floorfrac{x- j(ad+1)}{d}$ holds  for 
the full range $0 \le j \le d - 1$. 
Then Proposition \ref{prop:pi-floor-recursion} gives  
\begin{align*}
\multcount_{a,d}(x) 	 
%&= \sum_{j=0}^{d-1} \floor{\frac{x- j(ad+1)}{d}}^{+}\\
	&=  \sum_{j=0}^{d-1} \floor{\frac{x - j(ad+1)}{d}} \\
	&= \sum_{j=0}^{d-1} \left( \frac{x}{d} - \frac{j}{d}(ad + 1) + O(1)\right)\\
	 &= x - \frac{1}{2}(d - 1) \left(ad + 1\right) + O \left( d\right)  \\
	 &= x- \frac{1}{2} a d^2 + \frac{1}{2} ad + O \left( d \right).
\end{align*}

For the region below the phase change, where $\ell ad \le x < (\ell +1) ad$  for some $\ell$ with  $0 \le \ell < d$, 
we have $\floorfrac{x- j(ad+1)}{d}^{+} = \floorfrac{x- j(ad+1)}{d} $ for $0 \le j \le \ell-1$, 
but equals $O\left( a \right) $ for $j = \ell$, and equals $0$ for $j \ge \ell + 1$.
 Consequently 
\begin{align*}
\multcount_{a,d}(x) 	 &= \sum_{j=0}^{d-1} \floor{\frac{x- j(ad+1)}{d}}^{+}\\
	&= \left(  \sum_{j=0}^{\ell-1} \floor{\frac{x - j(ad+1)}{d}} \right)  + O \left( a  \right)\\
	%&=  \sum_{j=0}^{\ell-1} \left( \floor{\frac{x - j}{d}} - ja \right) + O \left( \ell a\right) \\
	& = \sum_{j=0}^{\ell-1} \left( \floorfrac{x}{d} - \frac{j}{d}(ad + 1)  + O(1)\right) + O \left( a \right).
\end{align*}
Since $\ell  \floorfrac{x}{d}= a\ell^2 +O\left(a\ell \right)  = a(\frac{x}{ad})^2 +O\left(a\ell \right)$, we obtain
\begin{align*}
\multcount_{a,d}(x) 	 
	 &= \ell \floorfrac{x}{d} - \frac{(\ell-1)\ell}{2} \left( a+ \frac{1}{d}\right) +O \left( a +\ell \right) \\
	  &=  a(\frac{x}{ad})^2 - \frac{1}{2} a \, \left(\frac{x}{ad}\right)^2+ O \left( ad \right),\\
	  &=\frac{1}{2} a ( \frac{x}{ad})^2 +O\left( ad \right), 
\end{align*}
using $\verts{ \frac{x}{ad}- \ell } \le 1$ at the second to last line.

The sum for $\sigma_{a,d}(x)$  is vacuous for $x \le 0$.
\end{proof} 

%%%%%%%%%%%%%%%%%%%%%%%%%%%%%%%%%%%%%%%%%%%%%%%%%%%%%%
%
% section 7.3 Total size of initial $a$-floor quotient intervals    
%
%%%%%%%%%%%%%%%%%%%%%%%%%%%%%%%%%%%%%%%%%%%%%%%%%%%%%%K
\subsection{Total size of initial \asafe-floor quotient intervals}
\label{subsec:73} 

We estimate the size of sums $\multcount_{a,d}(x)$ over the small, medium, and large regions of $d$, where $1 \le d \le x$. 
%%%%%%%%%%%%%%%%%%%%%%%%%
%Note that the first sum below is empty for $x<a$  
%%%%%%%%%%%%%%%%%%%%%%%%%
Some of these sums may be empty for small $x$.

%%%%%%%
% Lemma. 7.5
%%%%%%%%%
\begin{lem}[Small region]
\label{lem:small-region}
 For any integer $a \ge 1$ and any real $x \ge 1$, 
\begin{equation}\label{eqn:small-region}
\sum_{d=1}^{ \sqrt{{x} / {a}}} \multcount_{a,d}(x) = \frac{5}{6\sqrt{a}} x^{3/2} + O \left(x \right),
\end{equation}
where the constant implied by the O-symbol   is independent of both $a$ and $x$.
\end{lem}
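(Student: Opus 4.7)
The key observation is that the summation range is precisely the range in which $\sigma_{a,d}(x)$ lies in the \emph{saturation regime} of Proposition~\ref{prop:sigma-estimate}: the condition $1 \leq d \leq \sqrt{x/a}$ is equivalent to $ad^2 \leq x$. Hence, throughout the sum, I can substitute
\[
\multcount_{a,d}(x) = x - \tfrac{1}{2}ad^2 + \tfrac{1}{2}ad + O(d),
\]
with an absolute $O$-constant. The remaining work is to sum this explicit expression from $d=1$ to $D := \floor{\sqrt{x/a}}$, evaluate asymptotically in $x$, and check that all error terms are $O(x)$ uniformly in $a$.

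First, I would dispose of the trivial case $x < a$, for which $D = 0$, the left-hand side is $0$, and the right-hand side is $\frac{5}{6\sqrt{a}}x^{3/2} + O(x) = O(x)$ since $x^{3/2}/\sqrt{a} \le x^{3/2}/\sqrt{x} = x$. Assume $x \ge a$, so that $D = \sqrt{x/a} + O(1)$ and, crucially, $\sqrt{ax} \le x$ and $aD^2 \le x$.

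Next, apply the standard summation formulas
\[
\sum_{d=1}^{D} d = \tfrac{1}{2}D^2 + O(D), \qquad \sum_{d=1}^{D} d^2 = \tfrac{1}{3}D^3 + O(D^2),
\]
to obtain
\[
\sum_{d=1}^{D} \multcount_{a,d}(x) = Dx - \tfrac{a}{6} D^3 + \tfrac{a}{4} D^2 + O(aD^2) + O(aD) + O(D^2).
\]
Substituting $D = \sqrt{x/a} + O(1)$ yields
$Dx = \frac{x^{3/2}}{\sqrt{a}} + O(x)$, and
$\frac{a}{6}D^3 = \frac{a}{6}\bigl((x/a)^{3/2} + O(x/a)\bigr) = \frac{x^{3/2}}{6\sqrt{a}} + O(x)$,
while the remaining three error terms are each bounded by a constant multiple of $aD^2 \le x$.
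Combining,
\[
\sum_{d=1}^{D}\multcount_{a,d}(x) = \frac{x^{3/2}}{\sqrt{a}} - \frac{x^{3/2}}{6\sqrt{a}} + O(x) = \frac{5}{6\sqrt{a}}\, x^{3/2} + O(x),
\]
with all $O$-constants depending only on the absolute $O$-constants from Proposition~\ref{prop:sigma-estimate} and from the Faulhaber-type polynomial sums.

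The step most worth care is the uniformity check: one has to verify that every error term, in particular $\frac{a}{2}\sum d = \frac{x}{4} + O(\sqrt{ax})$ and $\frac{a}{2}\sum d^2 = \frac{x^{3/2}}{6\sqrt{a}} + O(x)$, is absorbed into $O(x)$ \emph{independently of $a$}. This requires using the inequalities $aD^2 \le x$ and $\sqrt{ax} \le x$ that hold precisely in the regime $x \ge a$ where $D \ge 1$. Once these uniform bounds are in hand, the two leading contributions $Dx$ and $-\tfrac{a}{6}D^3$ combine with coefficients $1 - \tfrac{1}{6} = \tfrac{5}{6}$, producing the stated main term.
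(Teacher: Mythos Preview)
Your proposal is correct and follows essentially the same approach as the paper: both recognize that the summation range $d \le \sqrt{x/a}$ places $\multcount_{a,d}(x)$ in the saturation regime of Proposition~\ref{prop:sigma-estimate}, substitute the resulting formula $x - \tfrac{1}{2}ad^2 + \tfrac{1}{2}ad + O(d)$, and sum term by term using the standard polynomial-sum formulas. Your treatment is in fact slightly more careful than the paper's, in that you explicitly dispose of the degenerate case $x < a$ and verify the uniformity inequalities $aD^2 \le x$ and $\sqrt{ax} \le x$ needed to absorb the secondary terms into $O(x)$ independently of $a$.
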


\begin{proof}
For $1 \le d \le \sqrt{\frac{x}{a}}$ we have $x \ge a d^2 $.
Applying Proposition~\ref{prop:sigma-estimate} yields
\begin{align*}
\sum_{d=1}^{ \sqrt{x/a} } \multcount_{a,d}(x) 
&=  \sum_{d=1}^{ \sqrt{x/a} } \left( x - \frac{1}{2} a d^2 + \frac{1}{2}ad + O(d)  \right)\\
&= \left(\frac{x^{3/2}}{\sqrt{a}} +O\left( x \right) \right) - \left(\frac{1}{6}  \frac{x^{3/2}}{\sqrt{a}} +O\left(x \right) \right)+
\left(\frac{1}{4}x + O\left(  \sqrt{ax}  \right)\right) + O\left( \frac{x}{a}\right) \\
&= \frac{5}{6\sqrt{a}} x^{3/2} + O \left(x  \right),
\end{align*}
which is \eqref{eqn:small-region}.
\end{proof}

%%%%%%%%%%%%
%
% Lemma. 7.6
%
%%%%%%%%%
\begin{lem}[Medium region]\label{lem:middle-region}
For any integer $a \ge 1$ and any real $x \ge 1$, 
\begin{equation}\label{eqn:middle-region}
\sum_{d >   \sqrt{{x} / {a}}}^{ \sqrt{x}}\multcount_{a,d}(x)  =
\frac{1}{2\sqrt{a}} x^{3/2} - \frac{1}{2a}  x^{3/2}+ O \left( x \sqrt{a} + a\sqrt{x} \right),
\end{equation}
where the constant implied by the $O$-symbol is independent of both $a$ and $x$.
\end{lem}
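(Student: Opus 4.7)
The plan is to apply Proposition~\ref{prop:sigma-estimate} termwise and then separately estimate the main and error contributions. Throughout the medium range $\sqrt{x/a} < d \le \sqrt{x}$, one has $ad^2 \ge x$, so every term falls in the first (parabolic) case of the proposition, giving a decomposition $\multcount_{a,d}(x) = \frac{x^2}{2ad^2} + E_{a,d}(x)$.

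For the main term I would set $N = \floor{\sqrt{x/a}}$ and $M = \floor{\sqrt{x}}$ and apply the elementary integral-comparison identity
$$\sum_{N < d \le M} \frac{1}{d^2} = \frac{1}{N} - \frac{1}{M} + O\!\left(\frac{1}{N^2}\right).$$
Substituting $\frac{1}{N} = \sqrt{a/x} + O(a/x)$ and $\frac{1}{M} = 1/\sqrt{x} + O(1/x)$, and multiplying by $\frac{x^2}{2a}$, produces exactly the two displayed main terms $\frac{x^{3/2}}{2\sqrt{a}} - \frac{x^{3/2}}{2a}$, with residual error $O(x)$. This step is essentially a routine Riemann-sum calculation.

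The main obstacle is the error estimation: the bound $E_{a,d}(x) = O(ad)$ stated in Proposition~\ref{prop:sigma-estimate} would sum to $O(ax)$ across the medium range, which is much too large. To recover the claimed bound I would revisit the computation in the proof of Proposition~\ref{prop:sigma-estimate} and track the error more carefully, extracting the sharper estimate $E_{a,d}(x) = O(a\ell + a)$ where $\ell = \floor{x/(ad)}$. Since $d > \sqrt{x/a}$ forces $\ell \le \sqrt{x/a}$, this gives $a\ell \le \sqrt{ax}$, and summing the sharpened error over the at most $\sqrt{x}$ integers $d$ in the medium range yields
$$\sum_d O(a\ell + a) = O\!\left(\sqrt{x}\cdot\sqrt{ax} + a\sqrt{x}\right) = O\!\left(x\sqrt{a} + a\sqrt{x}\right),$$
matching the claim. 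Combining with the main-term estimate, and absorbing the residual $O(x)$ into $O(x\sqrt{a})$ (valid since $a \ge 1$), completes the proof.
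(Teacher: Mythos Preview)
Your proposal is correct and follows essentially the same path as the paper. The paper does not cite Proposition~\ref{prop:sigma-estimate} at all here; instead it works directly from the exact formula of Proposition~\ref{prop:pi-floor-recursion} and redoes the inner-sum expansion inline, arriving at $\frac{x^2}{2a}\sum_d \frac{1}{d^2}$ with the same error $O(x\sqrt{a}+a\sqrt{x})$. Your plan to ``revisit the proof of Proposition~\ref{prop:sigma-estimate}'' and extract the sharper error $O(a\ell+a)$ with $\ell=\floor{x/(ad)}$ is exactly that same computation, just packaged as an intermediate lemma rather than done inline; the key observation $\ell\le\sqrt{x/a}$ in the medium range is what makes the error sum acceptable in both versions.
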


\begin{proof} 
By hypothesis $\sqrt{{x}/ {a}} < d  \le \sqrt{x}$, so that $x < ad^2 \le ax$, whence
\begin{equation}\label{eqn:short}
 \floorfrac{x}{ad}\le \frac{x}{ad} < d \le \frac{x}{d}.
\end{equation}
Recall that for all  integers $a \ge 1$ and any $x \ge 1$, 
%%%%%%%%%%%%%%%%%%%%%%%%%%%%%%%%%%%%%%%%%%%%%%%%%%%%%%%%%%%%
%For $\sqrt{\frac{n}{a}}< d \le \sqrt{n}$ the condition for $d$ to be an $a$-floor quotient of $n$ is
%that, setting $k = \floorfrac{n}{d}$, a $1$-floor quotient, 
%we have $n = kd +r$ with $0 \le r < \min(d, \frac{k}{a}) = \frac{k}{a}.$
%%%%%%%%%%%%%%%%%%%%%%%%%%%%%%%%%%%%%%%%%%%%%%%%%%%%%%%%%%%%%%
\begin{align*}
\sum_{d> \sqrt{{x} / {a}}}^{\sqrt{x}}  \multcount_{a,d}(x) &=
\sum_{d > \sqrt{{x} / {a}}}^{\sqrt{x}} \left( \sum_{j=0}^{d-1}  \floorfrac{x-j(ad+1)}{d}^{+} \right) \\
&= \sum_{d > \sqrt{{x} / {a}}}^{\sqrt{x}} \left( (\sum_{j=0}^{\floorslash{x}{ad}}  \floorfrac{x-j(ad+1)}{d}  )  
%+ O\left( a \right) \right) \\
+ O\left(\frac{1}{d} \floorfrac{x}{ad}\right)\right).
\end{align*}
where the cutoff in the inner sum can be made since $x - j(ad + 1) < 0$ holds for all $j \ge \floorfrac{x}{ad} + 1$.  
Only the  term $j=\floorfrac{x}{ad}$ can contribute to the remainder term, 
and only when 
% in which case
\[
0 > x - j(ad + 1) = (x- \floorfrac{x}{ad} ad) - \floorfrac{x}{ad} \ge - \floorfrac{x}{ad}
\]
%and that  only when $ x-j(ad+1)<0$, and we have  $ x-j(ad+1) =(x- \floorfrac{x}{ad} ad) - \floorfrac{x}{ad} \ge - \floorfrac{x}{ad}$,
requiring a correction term at most $O\left( \frac{1}{d} \floorfrac{x}{ad}\right)$.
The implied $O$-constant is independent of $x, a$ and $d$.
Consequently,
\begin{align*}
\sum_{d>\sqrt{{x} / {a}}}^{\sqrt{x}}  \multcount_{a,d}(x) 
%&=  \sum_{d > \sqrt{{x} /{a}}}^{\sqrt{x}}  \sum_{j=0}^{\floorslash{x}{ad}}\left(\floorfrac{x-j}{d} -ja +O \left( \frac{1}{d} \floorfrac{x}{ad} \right) \right)\\
&= \sum_{d > \sqrt{{x} / {a}}}^{\sqrt{x}} \left( \sum_{j=0}^{\floorslash{x}{ad}}\left(  \floor{\frac{x}{d}} - j(a+ \frac{1}{d})  + O\left( 1\right) + O \left( \frac{1}{d} \floorfrac{x}{ad} \right) \right)\right)\\
%%%%%%%%%%%%%%%%%%%%%%%%%%%%%%%%%%%%%%%%%%%%
%\left(\sum_{j=0}^{\floorslash{x}{ad}} ja\right)  
% + O\left(\floorfrac{x}{ad}\right)\right)\\
 %&= \sum_{d > \sqrt{{x} / {a}}}^{\sqrt{x}} \left( \sum_{j=0}^{\floorslash{x}{ad}}\left(  \floor{\frac{x}{d}} - j(a+ \frac{1}{d})  \right) 
%\left(\sum_{j=0}^{\floorslash{x}{ad}} ja\right)  
 %+ O\left(\floorfrac{x}{ad} + \frac{1}{d}(\floorfrac{x}{ad})^2\right) \right). \\
%%%%%%%%%%%%%%%%%%%%%%%%%%%%%%%%%%%%%%%%%%%%
 &=\sum_{d > \sqrt{{x} / {a}}}^{\sqrt{x}} \left( \sum_{j=0}^{\floorslash{x}{ad}}\left(  \floor{\frac{x}{d}} - j(a+ \frac{1}{d})  \right) 
%\left(\sum_{j=0}^{\floorslash{x}{ad}} ja\right)  
 %+ O\left(\floorfrac{x}{ad}\right)\right)\\
 + O\left( 1\right) \right),
\end{align*}
using $0 < \floorfrac{x}{ad} < d$ from \eqref{eqn:short} in the last line. 
Next, using this bound again on the  first line, 
\begin{align*} 
\sum_{d > \sqrt{{x} / {a}}}^{\sqrt{x}}  \multcount_{a,d}(x) 
 &= \sum_{d > \sqrt{{x} / {a}}}^{\sqrt{x}} \left( \sum_{j=0}^{\floorslash{x}{ad}}\left(  \floor{\frac{x}{d}} - j(a+ \frac{1}{d})  \right) \right) +
 O\left( \sum_{d > \sqrt{{x} / {a}}}^{\sqrt{x}} d  \right)\\
 %\frac{1}{d} \right) \\
 &=  \sum_{d > \sqrt{{x} / {a}}}^{\sqrt{x}} \left( \sum_{j=0}^{\floorslash{x}{ad}}\left(  \floor{\frac{x}{d}} - j(a+ \frac{1}{d}) \right) \right)+ O\left(x( 1- \frac{1}{a}) + \frac{x}{\sqrt{a}} \right).
 %O \left( \log a +1\right).
 \end{align*}
 %again using $0 \le\frac{x}{ad} \le d$ in the first  to second line.  

We evaluate the inner sums to obtain
\begin{align*}
\sum_{d>\sqrt{{x} / {a}}}^{\sqrt{x}}  \multcount_{a,d}(x) 
 &= \sum_{d > \sqrt{{x} / {a}}}^{\sqrt{x}} \left( (\floorfrac{x}{ad} + 1) \floorfrac{x}{d} - (a + \frac{1}{d} ) \frac{1}{2} \floorfrac{x}{ad} ( \floorfrac{x}{ad} +1) \right)
% + O \left( \floorfrac{x}{d}\right) 
%\right)
 +O \left(x (1 - \frac{1}{a}) + \frac{x}{\sqrt{a}}  \right) \\
&= \sum_{d > \sqrt{{x} / {a}}}^{\sqrt{x}} \left((\floorfrac{x}{ad} \floorfrac{x}{d} - \frac{a}{2} \floorfrac{x}{ad}^2) + O\left( a (1+\floorfrac{x}{ad} )\right) \right) + 
O \left(x(1 - \frac{1}{a}) + \frac{x}{\sqrt{a}}) \right).
\end{align*}

We remove the floor functions, adding  new  remainder terms, and simplify,  obtaining
\begin{align*}
\sum_{d > \sqrt{{x} / {a}}}^{\sqrt{x}}  \multcount_{a,d}(x)  
&= \frac{1}{2a} \sum_{d > \sqrt{{x} / {a}}}^{\sqrt{x}}\frac{x^2}{d^2} +
 \sum_{d > \sqrt{{x} / {a}}}^{\sqrt{x}} O \left( a+\floorfrac{x}{d}\right)  +O \left(  x( 1-\frac{1}{\sqrt{a}})       \right) \\
&= \frac{x^2}{2a} \sum_{d > \sqrt{{x} / {a}}}^{\sqrt{x}} \frac{1}{d^2} + O\left(\sqrt{x} a + x \log(\sqrt{a}) + x( 1-\frac{1}{\sqrt{a}}) \right)\\
&= \frac{x^2}{2a}\left( \sqrt{\frac{a}{x}} - \frac{1}{\sqrt{x}} \right) + O \left(x (1+ \log a) +  a\sqrt{x}  \right)\\
&= \frac{x^2}{2a}\left( \sqrt{\frac{a}{x}} - \frac{1}{\sqrt{x}} \right) + O \left(x \sqrt{a} + a \sqrt{x}  \right),
\end{align*}
which gives \eqref{eqn:middle-region} on the domain $x \ge 1$ and integer $a \ge 1$.
%%%%%%%%%%%%%%%%%%%%%%%%%%%%%%%%%%%%%%%%%%%%
% (The terms $a\sqrt{x}$ and $x \sqrt{a}$ dominate all the other error  terms in the
%remainder term, in the region, integer $a \ge 1$ and region $x \ge 1$.) )  
%%%%%%%%%%%%%%%%%%%%%%%%%%%%%%%%%%%%%%%%%%%
\end{proof} 

%\jeff{(10/10/23) Lemma 7.6 new proof details, error term adds O(ax)  over that of  [HR] v47R}
%%%%%%%
% Lemma. 7.7
%%%%%%%%%
\begin{lem}[Large region]\label{lem:large-region}
For any $a \ge 1$ and any  real $x \ge 1$, 
\begin{equation}\label{eqn:large-region}
\sum_{d >   \sqrt{x}    }^{x}\multcount_{a,d}(x)  =
% \frac{5}{6} \frac{x^{3/2}}{\sqrt{a}} + O (a x \log x),
 \frac{1}{2a}  x^{3/2}+ O (x \log x +ax),
\end{equation}
where the constant implied by the $O$-symbol is independent of both $a$ and $x$.
\end{lem}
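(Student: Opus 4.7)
The plan is to mirror the approach used in the proof of Lemma~\ref{lem:middle-region}, splitting the range $\sqrt{x} < d \le x$ into two subregions distinguished by whether the inner sum in Proposition~\ref{prop:pi-floor-recursion} has one or several nonzero terms. Define
\[
A := \{d : \sqrt{x} < d \le x/a\}, \qquad
B := \{d : \max(\sqrt{x}, x/a) < d \le x\}.
\]
For $d \in B$ we have $\floorfrac{x}{ad} = 0$, so Proposition~\ref{prop:pi-floor-recursion} collapses to $\multcount_{a,d}(x) = \floorfrac{x}{d}$. When $a \ge \sqrt{x}$ the set $A$ is empty and the entire large region falls within $B$.

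For the $B$-contribution I would use the standard harmonic-sum estimate
\[
\sum_{d \in B} \floorfrac{x}{d}
= x \sum_{d \in B} \frac{1}{d} + O(|B|)
= x\left( \log x - \log \max(\sqrt{x}, x/a) \right) + O(x),
\]
which in either regime is $O(x \log a + x)$. Since $\log a \le \min(\log x, a)$, this falls within $O(x \log x + ax)$. When $a \ge \sqrt{x}$ the putative main term satisfies $\frac{1}{2a} x^{3/2} \le \frac{1}{2}\sqrt{x} = O(x)$, so the lemma already follows from this $B$-estimate alone.

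For the $A$-contribution (assuming $a < \sqrt{x}$, so $A$ is nonempty), I would imitate the manipulation used in the proof of Lemma~\ref{lem:middle-region}: truncate the sum in Proposition~\ref{prop:pi-floor-recursion} to $0 \le j \le \floorfrac{x}{ad}$, replace $\floorfrac{x - j(ad+1)}{d}$ by $\floorfrac{x}{d} - j(a+1/d) + O(1)$, and evaluate the inner sum in closed form to obtain
\[
\multcount_{a,d}(x) = \left(\floorfrac{x}{ad}+1\right)\floorfrac{x}{d} - \tfrac{1}{2}\floorfrac{x}{ad}\left(\floorfrac{x}{ad}+1\right)(a + 1/d) + O\left(\floorfrac{x}{ad}+1\right).
\]
Removing the remaining floors, which is legitimate because $\floorfrac{x}{ad} < \sqrt{x}/a$ throughout $A$, produces a main term of $\frac{x^2}{2ad^2}$ for each $d \in A$. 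Summing and using the integral estimate $\sum_{d \in A} \frac{1}{d^2} = \frac{1}{\sqrt{x}} - \frac{a}{x} + O(1/x)$ yields
\[
\sum_{d \in A} \multcount_{a,d}(x) = \frac{x^2}{2a}\left( \frac{1}{\sqrt{x}} - \frac{a}{x} \right) + (\text{error}) = \frac{x^{3/2}}{2a} - \frac{x}{2} + (\text{error}).
\]

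The main technical obstacle will be bookkeeping the error terms. The $O(1)$ per-summand rounding error accumulates to at most $O\left( \sum_{d \in A}(\floorfrac{x}{ad}+1) \right) = O\left( \frac{x \log(\sqrt{x}/a)}{a} + |A| \right) = O(x \log x)$, and the boundary correction from the possibly negative term at $j = \floorfrac{x}{ad}$ parallels the $O(\frac{1}{d}\floorfrac{x}{ad})$ correction appearing in the proof of Lemma~\ref{lem:middle-region} and is smaller still. Combining the $A$ and $B$ contributions and absorbing the $-\frac{x}{2}$ into the error term yields the claimed estimate $\frac{1}{2a}x^{3/2} + O(x \log x + ax)$.
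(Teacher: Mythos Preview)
Your proposal is correct and follows essentially the same route as the paper: apply Proposition~\ref{prop:pi-floor-recursion}, truncate the inner sum at $j=\floorfrac{x}{ad}$, expand $\floorfrac{x-j(ad+1)}{d}$ as $\floorfrac{x}{d}-j(a+1/d)+O(1)$, and sum to extract the main term $\tfrac{x^2}{2a}\sum 1/d^2$. The only difference is organizational: you split off the subrange $B=\{d>x/a\}$ where $\floorfrac{x}{ad}=0$ and handle it by the elementary harmonic estimate, whereas the paper treats the full range $\sqrt{x}<d\le x$ uniformly and lets the per-term error $O(a(1+\floorfrac{x}{ad}))$ absorb the $B$-contribution (since $\floorfrac{x}{d}<a$ there); both lead to the same $O(x\log x+ax)$ error.
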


\begin{proof}
The arguments parallel the last lemma, except the estimates of remainder terms change,
since \eqref{eqn:short} does not hold; 
instead $\sqrt{x} < d  \le x$. 
Note that $ad > x$ can occur.

As before we obtain
\begin{align*}
\sum_{d> \sqrt{x}}^{x}  \multcount_{a,d}(x)
&= \sum_{d > \sqrt{x}}^{x} \left( \sum_{j=0}^{\floorfrac{x}{ad}}  \floor{\frac{x-j(ad+1)}{d}}  +O\left(\frac{1}{d} \floorfrac{x}{ad}\right)\right)\\
&=\sum_{d > \sqrt{x}}^{x} \left( \sum_{j=0}^{\floorfrac{x}{ad}}  \floorfrac{x}{d} - j(a + \frac{1}{d}) +O(1)  +O \left(\frac{1}{d}\floorfrac{x}{ad}\right)\right),
\end{align*}
But now we have
\begin{align*} 
\sum_{d> \sqrt{x}}^{x}  \multcount_{a,d}(x)
&= \sum_{d > \sqrt{x}}^{x} \left( \sum_{j=0}^{\floorfrac{x}{ad}}\left(  \floorfrac{x}{d} - j(a+ \frac{1}{d})  \right) \right)
 %O\left( \sum_{d > \sqrt{x}}^{x} \frac{1}{d} \right) \\
 +O\left( \sum_{d > \sqrt{x}}^{x}( \floorfrac{x}{ad} +  \frac{1}{d} \floorfrac{x}{ad}^2) \right)\\
&=\sum_{d > \sqrt{x}}^{x} \left( \sum_{j=0}^{\floorfrac{x}{ad}}\left(  \floorfrac{x}{d} - j(a+ \frac{1}{d})  \right) \right)+
 O\left( \frac{x}{a} \log x + \frac{x}{2a^2}\right), 
\end{align*}
in which the implied $O$-constants are independent of $x$ and $a$. 
The remainder term above is estimated using
\[
\sum_{d> \sqrt{x}}^{x} \frac{x^2}{a^2 d^3} = \frac{x^2}{a^2} \int_{\sqrt{x}}^x \frac{1}{t^3} dt + O(\frac{\sqrt{x}}{a^2} )
%=\frac{x^2}{2a^2}( \frac{1}{x}- \frac{1}{x^2} 
= \frac{x}{2a^2} + O(\frac{\sqrt{x}}{a^2}).
\]
%\jeff{(3/1/23) Re-check  this error term $O(1/a^2)$ to the integral, with no dependence on $x$.
%At the lower endpoint $d= \sqrt{x}$ the terms from $d$ to $d+1$ in the sum change fastest.}

Evaluating the inner sums yields 
  \begin{align*}
\sum_{d>\sqrt{x}}^{x}  \multcount_{a,d}(x) 
 &= \sum_{d > \sqrt{x}}^{x} 
 %\left( (\floorfrac{x}{ad} +1) \floorfrac{x}{d} - (a + \frac{1}{d} ) \frac{1}{2}( \floorfrac{x}{ad} ( \floorfrac{x}{ad} +1) \right)
% + O \left( \floorfrac{x}{d}\right) 
%\right)
 %+O \left(\log a +1 \right) \\
%&= \sum_{d > \sqrt{{x} / {a}}}^{\sqrt{x}}
 \left( (\floorfrac{x}{ad} \floorfrac{x}{d} - \frac{a}{2} \floorfrac{x}{ad}^2) + O\left( a (1 + \floorfrac{x}{ad} )\right) \right) + O \left( \frac{x}{a} (\log x +1) \right).
\end{align*}
%\begin{align*}
%\sum_{d> \sqrt{x}}^{x}  \multcount_{a,d}(x)
%&= \left( \sum_{d > \sqrt{x}}^{x} \floorfrac{x}{ad} \floorfrac{x}{d} - \frac{a}{2} (\floorfrac{x}{ad})^2 + O \left(  \floorfrac{x}{d}\right) \right) + O\left( \frac{x}{a} \log x\right).
%\end{align*}
We remove the floor functions, adding  new remainder terms, and simplify, obtaining
\begin{align*}
\sum_{d > \sqrt{x}}^{x}  \multcount_{a,d}(x)  &= \frac{1}{2a} \sum_{d > \sqrt{x}}^{x}\frac{x^2}{d^2} + 
\sum_{d > \sqrt{x}}^{x} O \left(a+ \floorfrac{x}{d}\right) +O \left( \frac{x}{a} \log x \right)\\
&= \frac{x^2}{2a} \sum_{d > \sqrt{x}}^{x}\frac{1}{d^2} + O\left(x\log x  +ax\right)\\
&= \frac{x^2}{2a} \left( \frac{1}{\sqrt{x}} - \frac{1}{x}+O(\frac{1}{x^{3/2}} )\right)+ O \left(x\log x + ax\right), 
\end{align*}
which implies \eqref{eqn:large-region}, with the constant  implied by the $O$-symbol independent of both  $x$ and $a$,
over  the parameter domain $D= \{ (x, a): \, 1 \le x < \infty,\,\, 1\le a < \infty, \, a \in \NN^{+} \}$. 
\end{proof}

%%%%%%%%%%%%%%%%%%%%%%%%%%%%%%%%%%%%%%%%%%%%%%%%%%%%%%
%
% section 7.4 Proof of Theorem 7.1  
%
%%%%%%%%%%%%%%%%%%%%%%%%%%%%%%%%%%%%%%%%%%%%%%%%%%%%%%K
\subsection{Proof of Theorem~\ref{thm:52}}\label{subsec:74} 

\begin{proof}[Proof of Theorem \ref{thm:52}]
The  result 
\begin{equation}
    \sum_{n=1}^x \verts{ \ADset_a[1,n] } = \frac{4}{3 \sqrt{a}} x^{3/2} + O \left( x\log x + ax\right).
\end{equation}
follows from the identity \eqref{eqn:key-identity}, combining the
estimates of the three Lemmas \ref{lem:small-region}, \ref{lem:middle-region} and \ref{lem:large-region}.
The constant implied by the $O$-symbol is independent of both  $x$ and $a$,
over the parameter domain $D = \{ (x, a): \, 1 \le x < \infty,\,\, 1\le a < \infty, \, a \in \NNplus \}$. 
Since $x \ge 1$, dividing both sides by $x$ yields the result.
\end{proof}

%%%%%
% Remark 7.8
%%%%%
\begin{rmk}\label{rem:78}
We have the identity, valid for $1 \le y \le x$, 
\begin{equation}
\sum_{d=1}^y \sigma_{a,d}(x)= \sum_{n=1}^x \verts{ \ADset_a[1, n] \cap \{1, 2, \ldots, y\} }.
\end{equation}
We may then rewrite  Lemma \ref{lem:small-region}
as asserting, for  any $a \ge 1$ and $x \ge 2$, 
\begin{equation}\label{eqn:small-region2}
\sum_{n=1}^x \verts{ \ADset_a[1, n] \cap [1, \sqrt{\frac{x}{a}}] } = \frac{5}{6\sqrt{a}} x^{3/2} + O \left(x \right).
\end{equation}

On the other hand, one can prove
\begin{equation}\label{eqn:small-region2r}
%\sum_{d=1}^{ \sqrt{{x} / {a}}} \multcount_{a,d}(x)
\sum_{n=1}^x \verts{ \ADset_a[1, n] \cap [1, \sqrt{\frac{n}{a}}] } = \frac{2}{3\sqrt{a}} x^{3/2} + O \left(x \right),
\end{equation}
using the fact that $\verts{ \ADset_a[1, n] \cap [1, \sqrt{\frac{n}{a}}] } = \sqrt{\frac{n}{a}} + O(1)$
holds, using  the (rigorous) small region bound (1) given in Section \ref{subsec:52b}. 
Combining \eqref{eqn:small-region2r}  with   Theorem \ref{thm:52}, one 
then deduces
\begin{equation}\label{eqn:small-region3}
%\sum_{d=1}^{ \sqrt{{x} / {a}}} \multcount_{a,d}(x)
\sum_{n=1}^x \verts{ \ADset_a[1, n] \cap [\sqrt{\frac{n}{a}}], n] } = \frac{2}{3\sqrt{a}} x^{3/2} + O \left(x\log x + ax \right).
\end{equation}
It follows  that the median of the distribution of elements of $Q_a[1, n]$, viewed as a subset of $\sA[1,n]$,  is, on average  close to $\sqrt{\frac{n}{a}}$.
\end{rmk}

%%%%%%%%%%%%%%%%%%%%%%%%%%%%%%%%%%%%%%%%%%%%%%%%%%%%%%
%
% section 8
%
%%%%%%%%%%%%%%%%%%%%%%%%%%%%%%%%%%%%%%%%%%%%%%%%%%%%%
\section{Concluding Remarks}\label{sec:concluding} 
\setcounter{equation}{0}

We conclude with  three directions for further investigation.

%%%%%%%%%%%%%%%%%%%%%%%%%%%%%%%%%
% Section 8.1  Mobius functon
%%%%%%%%%%%%%%%%%%%%%%%%%%%%%%%%%% 
\subsection{\asafe-floor quotient \Mobius{} functions}\label{subsec:81}

The zeta function and \Mobius{} functions of a partial order were introduced in Rota~\cite{Rota:64}, though they were long studied for distributive lattices, see 
Weisner~\cite{Weisner:35} and Ore~\cite[Chaps. 10, 11]{Ore:62}.

The two-variable \Mobius{} function of the $1$-floor quotient partial order was studied in \cite{LagR:23a}.
The results there showed the function $\muAD(1, n)$ has infinitely many sign changes. 
Numerical studies indicate it takes large values,
suggesting that $\verts{ \muAD(1,n) } \ge n^{0.6}$,
but it has not even been proved that $\verts{ \muAD(1,n) }$
is unbounded.
It is an interesting problem  to study the  \Mobius{} functions of the
 $a$-floor quotient partial orders $\ADset_a = (\NNplus, \ADa)$,
 particularly addressing the \Mobius{} functions of 
 their initial intervals $ \ADset_a[1,n]$. 
 Here we note the following facts.
 
 \begin{enumerate}
 \item
Theorem~\ref{thm:interval-hierarchy-10} implies that 
as $a$ increases,
the value of $\muADa(d,n)$  starts at  $\muAD(d,n)$ and arrives at the classical \Mobius{} function value $\mu(\frac{n}{d})$
for  $a \ge \floor{ \frac{n}{d}}$, defining $\mu(x)=0$ if $x$ is not an integer.
 \item
 One can show a polynomial upper bound on the size of the $a$-floor quotient
 \Mobius{} function $\verts{ \muADa(d, n) } \le (\frac{n}{d})^{\alpha_0}$ 
 where $\alpha_0 \approx 1.729$ is the unique solution  to  $\zeta(\sigma)=2$ on the region $\sigma>1$, using methods similar to that 
 used in \cite{LagR:23a} for the $1$-floor quotient. The upper bounds for
 the $1$-floor quotient $\muAD(1,n)$ used information on the number of chains connecting
 $1$ to $n$. Since the $1$-floor quotient order refines the $a$-floor quotient order,
 each $a$-floor quotient interval has fewer chains than that of the $1$-floor quotient
 so the $1$-floor quotient upper bounds apply.
 % \jeff{TBA}
\item
The behavior of  the \Mobius{} function $\muADa(d,n)$ of the $a$-floor quotient order may exhibit new features over that of the $1$-floor quotient.
As observed in Section \ref{sec:a-duality},  the duality involution for the $1$-floor quotient breaks down for the $a$-floor quotient for $a \ge 2$.
\end{enumerate}

Examples show  the convergence of $\muADa(d,n)$ to $\mu(d,n)$, as $a \to\infty$, is not monotonic on $a$.
In fact there exist  $d , n$ and $a > 1$ such that
\[
	|\mu_1(d,n)|  < |\mu_a(d, n)| .
\]
%%%%%%%%%%%%%
% Example 8.1
%%%%%%%%%%%%%
\begin{exa}\label{exa:81}
\label{exa:mu-13}
For  $(d, n) = (1, 13)$ we have $\muAD(1,13) = 0$ and $\muADof{2}(1, 13) = 1$, while $\mufquo{a}(1,13) = \mu(13)= -1$ holds  for all sufficiently large $a$ 
(in particular, for $a \geq \floorfrac{13}{2} = 6$ by Theorem \ref{thm:interval-hierarchy-10}).
A table listing all values of $\muADa(1,13)$, as $a$ varies, is given below.
% \begin{table}[h]
\[
 \begin{array}{c|cccccccc}
 a & 1 & 2 & 3 & 4 & 5 & 6 & 7 & \cdots \\
 \hline
 \muADa(1,13) & 0 & 1 & 1 & 0 & 0 & -1 & -1 & \cdots
 \end{array}
\]
%%%%%%%%%%%%%%%%%%%%%%%%%%%%%%%%%%%%%%
%\begin{center}
%\begin{tabular}{c|ccccccc}
% $a$ & $1$ & $2$ & $3$ & $4$ & $5$ & $6$ & $\cdots$ \\
% \hline
% $\muADa(1,13)$ & $0$ & $1$ & $1$ & $0$ & $0$ & $-1$ & $-1$
%\end{tabular}
%\end{center}
% \caption{Table of \Mobius{} values.}
% \end{table}
%\end{rmk} 
\end{exa}
%%%%%%%%%%%%%
% Example 8.2
%%%%%%%%%%%%%
\begin{exa}\label{exa:82}
\label{exa:mu-21}
For $(d, n) = (1, 21)$, we have $\muAD(1, 21) = -1$ and $\muADof{2}(1, 21) = 2$,
while $\muADa(1, 21) = \mu(21) = 1$ for sufficiently large $a$.
The table below lists all values of $\muADa(1, 21)$, as $a$ varies.
\[
 \begin{array}{c|cccccccccc}
 a & 1 & 2 & 3 & 4 & 5 & \cdots & 9 & 10 & 11 & \cdots \\
 \hline
 \muADa(1, 21) & -1 & 2 & 3 & 2 & 2 & \cdots & 2 & 1 & 1 & \cdots
 \end{array}
\]
\end{exa}
%%%%%%%%%%%%%%%%%%%%%%
% Omitted Example 8.3
%%%%%%%%%%%%%%%%%%%%%%%
%\begin{exa}
%\label{exa:mu-30}
%For $(d, n) = (1, 30)$, we have $\muAD(1, 30) = -2$ and $\muADof{2}(1, 30) = -3$,
%while $\muADa(1, 30) = \mu(30) = -1$ for sufficiently large $a$.
%The table below lists all values of $\muADa(1, 30)$, as $a$ varies.
%\[
% \begin{array}{c|rrrrrrr}
% a & 1 & 2 & 3 & \cdots & 6 & 7 & \cdots \\
% \hline
% \muADa(1,30) & -2 & -3 & -2 & -2 & -2 & -1 & -1
% \end{array}
%\]
%\end{exa}
%%%%%%%%%%%%%%%%%%%%%%%%%%%%%%%

%%%%%%%%%%%%%%%%%%%%%%%%
% SEction 8.2 Incidence Algebra
%%%%%%%%%%%%%%%%%%%%%%%%% 

\subsection{Incidence algebras of floor quotient orders}
\label{subsec:82}

The incidence algebra of a poset ${(\NNplus, \preccurlyeq)}$ is the set of $\ZZ$-valued functions on the set of pairs 
$(d,n) \text{ such that } d \preccurlyeq_1 n$
under the operations of addition and convolution,
see Smith~\cites{Smith:67,Smith:69}. 
For general structures of incidence algebras over a ring $R$, 
which are in general non-commutative associative algebras, 
see Stanley~\cite{Stanley:70} and Spiegel and O'Donnell~\cite{SpiegelO:97}. 
In general, zeta  functions and \Mobius{} functions of (locally finite) partial orders are members of the incidence algebra.

Recall that the $1$-floor quotient incidence algebra  $\sI(\ADset_1[1,n])$ can be realized as a noncommutative ring of square integer matrices of size $|\ADset_1[1,n]|$.
It has been shown by several authors that $s_1(n):=|\ADset_1[1,n]|$ is of size $s_1(n)=2\sqrt{n} + O(1)$,
(\cite{Cardinal:10}, \cite{Heyman:19}, \cite[Corollary 4.4]{LagR:23a}).
Allowing $\QQ$-coefficients, it was shown in \cite[Theorem 4.15]{LagR:23a}
that the incidence algebra of $\ADset_1[1, n]$
has  dimension $\dim_{\QQ}( \sI_n (\QQ)) \sim \frac{16}{3} n^{3/4}$, as $n \to \infty$.

One may  study analogous questions
concerning  the structure of the incidence algebras $\sI(\ADset_a[d, n])$ of intervals  
of $a$-floor quotient partial orders.
Each $a$-floor quotient incidence algebra $\sI(\ADaset[1,n])$ can be realized as a noncommutative ring of square integer matrices of size $s_a(n):=|\ADset_a[1,n]|$.
This paper showed for $a \ge 2$ that $s_a(n)$ is smaller than $s(n) = |\ADset_1[1,n]| \approx 2 \sqrt{n}$,
and that the values $s_a(n)$ do not vary smoothly as function of $n$. 
It is an interesting question to estimate the dimension of the incidence algebra $\dim_{\QQ}(\sI(\ADaset[1,n]))$ as a $\QQ$-vector space,
which counts  the total number of incidences, as a function of $a$ and $n$.

%%%%%%%%%%%%%%%%%%%%%%%%%%%
%
% Section 8.3  further interpolation to additive order?
%
%%%%%%%%%%%%%%%%%%%%%%%%%%%%
\subsection{Approximate divisor orders interpolating towards the additive total order}
\label{subsec:84}

The paper \cite[Sect. 7.4]{LagR:23a} noted that the $1$-floor quotient order is in some sense ``halfway between'' the multiplicative divisor order and the additive total order on $\NNplus$. 
For a quantitative comparison  of incidence relations, see \cite[Sect. 2.6]{LagR:23a}.
 
The $a$-floor quotient orders provide a sequence of approximate divisor orders interpolating between the $1$-floor quotient order and the divisor order.  
We ask: Is there an interesting family of approximate divisor orders on $\NNplus$ that interpolate in the opposite direction between the $1$-floor quotient order and the additive total order?
 
A positive answer to this question, if there is one, probably will require dropping the condition of monotonicity of convergence of the zeta functions of such a family to the zeta function of the additive order.

\section*{Acknowledgements}
We thank the four reviewers for helpful comments. 
The first author was partially supported by  NSF grant DMS-1701576, 
by a  Chern Professorship at MSRI in Fall 2018,
and by a 2018 Simons Fellowship in Mathematics in January 2019.
MSRI is supported in part by a grant from the National Science Foundation.
The second author was partially supported by
NSF grant DMS-1600223,
a Rackham Predoctoral Fellowship,
and an AMS--Simons Travel Grant.

%\appendix
%%%%%%%%%%%%%%%%
% REFERENCES
%%%%%%%%%%%%%%%%%%%%%%

\bibliographystyle{amsplain}
\bibliography{a-almost-divisors-ref}

\end{document}